\newtheorem{theorem}{Theorem}
\newtheorem{proposition}{Proposition}
\newtheorem {lemma}{Lemma}
\newtheorem{remark}{Remark}
\newenvironment{system}
{\left\lbrace\begin{array}{@{}l@{}}}
{\end{array}\right.}
\DeclarePairedDelimiter{\abs}{\lvert}{\rvert}
\DeclarePairedDelimiter{\norm}{\lVert}{\rVert}
\DeclarePairedDelimiter{\br}{\lbrace}{\rbrace}
\newcommand{\opn}[1]{\operatorname{#1}}
\newcommand{\lsz}{\left\lbrace}
\newcommand{\psz}{\right\rbrace}
\newcommand{\lav}{\left|}                       % left absolute value
\newcommand{\pav}{\right|}                      % right absolute value
\newcommand{\lbr}{\left[}                       % left bracket
\newcommand{\rbr}{\right]}                      % right bracket
\newcommand{\lp}{\left(}                        % left parenthesis
\newcommand{\rp}{\right)}                       % right parenthesis
\newcommand{\R}{\mathbb{R}}
\newcommand{\E}{\mathbb{E}}
\newcommand{\mP}{\mathbb{P}}
\newcommand{\Fcal}{\mathcal{F}}      % caligraphic F for filtrations
\title[Necessary SMP for dissipative systems in infinite horizon]{Necessary stochastic maximum principle for dissipative systems on infinite time horizon}
\subjclass{Primary: 93E20, 60H10; Secondary: 49K45.}
 \keywords{Stochastic maximum principle, dissipative systems, backward stochastic differential equation, stochastic discounted control problem, infinite time horizon, necessary conditions for optimality.}
\thanks{The first author has been supported by the Gruppo
Nazionale per l'Analisi Matematica, la Probabilit\`a e le loro
Applicazioni (GNAMPA) of the Istituto Nazionale di Alta Matematica
(INdAM) \\
The second author has been supported by the grant MIUR PRIN 2011 "Evolution differential problems: deterministic and stochastic
approaches and their interactions". }
\begin{document}

\author{Carlo Orrieri}
\address[C. Orrieri]{Dipartimento di Matematica, Universit\`a di Pavia. via Ferrata 1, 27100 Pavia, Italia}
\email{carlo.orrieri01@ateneopv.it}

\author{Petr Veverka}
\address[P. Veverka]{Dipartimento di Matematica, Politecnico di Milano. via Bonardi 9, 20133 Milano, Italia}
\email{petr.veverka@polimi.it}

\begin{abstract}
We develop a necessary stochastic maximum principle for a finite-dimensional stochastic control problem in infinite horizon under a polynomial growth and joint monotonicity assumption on the coefficients. The second assumption generalizes the usual one in the sense that it is formulated as a joint condition for the drift and the diffusion term. The main difficulties concern the construction of the first and second order adjoint processes by solving backward equations on an unbounded time interval. The first adjoint process is characterized as a solution to a backward SDE, which is well-posed thanks to a duality argument. The second one can be defined via another duality relation written in terms of the Hamiltonian of the system and linearized state equation. Some known models verifying the joint monotonicity assumption are discussed as well.
\end{abstract}

\maketitle

\section{Introduction}
The study of the stochastic maximum principle (SMP in short) is a current field of research motivated by the interest in finding necessary (and sufficient) conditions for optimality for stochastic control problems. The general idea of the SMP consists in associating to every controlled trajectory an adjoint equation which is backward in time. Its solution, called a dual process (which is, in fact, a pair of processes), is shown to exist under some appropriate conditions and plays a role of ``generalised Lagrange multipliers''. The SMP is a variational inequality formulated by means of the state trajectory and the dual process. It is satisfied by any optimal control and, usually, adding some convexity assumptions, it fully characterizes the optimality. Moreover, if the control enters the diffusion, the irregularity in time of the noise forces to introduce a second adjoint process which is strictly related to the Lyapunov equation for the first variation of the state.

The first general formulation of the SMP is due to Peng \cite{peng1990general} for finite dimensional systems. After this seminal work, there has been a large number of works on this subject, both in finite and infinite dimensions for different formulation of the control problem. SMP in infinite dimension
has been studied e.g. in Tang and Li \cite{tang1993maximum}, Fuhrman, Hu and Tessitore \cite{fuhrman2013stochastic}, Du and Meng \cite{du2013maximum}, L\"u and Zhang \cite{lu2012general} whereas some of the results in finite dimension comprise: Jump control: Tang and Li \cite{tang1994necessary}, \O ksendal and Sulem \cite{oksendal2005applied};
Singular control: Bahlali and Mezerdi \cite{bahlali2005general}, Dufour and Miller \cite{dufour2006maximum}, \O ksendal and Sulem \cite{oksendal2011singular}; Impulse control: Wu and Zhang \cite{wu2012maximum};
Delayed controlled systems: \O ksendal, Sulem and Zhang \cite{oksendal2011optimal};
Near-optimal control: Zhou \cite{zhou1998stochastic} and many others. \bigskip

This paper is a natural continuation of \cite{maslowski2014sufficient} on one side and \cite{orrieri2013stochastic} on the other. Our aim is to control the behaviour of a dissipative system in an unbounded time interval and to provide necessary conditions for optimality. If $W_t$ is a $d$-dimensional Brownian motion, the equation for the state can be written in the form
\begin{equation*}
X_t = x + \int_0^t b(X_s,u_s)ds + \int_0^t \sigma(X_s,u_s)dW_s,
\end{equation*}
$\mP-$a.s. for all $t \in [ 0,+\infty )$ and all $x \in \R^n$.
The objective is to minimize a discounted functional
\begin{equation*}
J\lp u(\cdot)\rp  = \E \int^{+\infty}_0 e^{-rt} f(X_t,u_t)dt,
\end{equation*}
 over all admissible controls $u(\cdot)$ with values in a general separable metric space $(U,d)$. Let us remark that this general assumption on the space of control actions allows us to consider a broad class of controls, such as bang-bang controls, which are excluded from the classic convexity framework. On the other hand, it necessarily forces us to formulate the SMP via a spike perturbation argument.

The analysis of the problem is based on the well-posedness of the state equation under a \emph{joint (or global) monotonicity} assumption.
For any $ x,y \in \mathbb{R}^n$ and fixed $p >0$, there exists $c_p \in \R$ such that
\begin{equation}
\braket{b(x,u) - b(y,u),x-y} + p \norm{\sigma(x,u) - \sigma(y,u)}^2_2 \leq c_p \abs{x-y}^2, \quad \forall u \in U,
\end{equation}
where $U$ is the space of control actions. For a detailed exposition of SDEs with this property see \citep{cerrai2001second}. This condition is a generalization of the usual dissipativity condition which involves only the drift of the equation and allows us to consider a larger class of concrete examples. Informally, there is a balance between the dissipativity of the drift and the noise term. If the drift term is dissipative enough, the diffusion term can grow in a polynomial way, instead of being globally Lipschitz. As a reference for this general assumption, see \cite{cerrai2001second} and \cite{prevot2007concise}. A list of several important examples satisfying joint monotonicity condition is given further in this paper.
However, let us notice that many interesting equations do not satisfy a global monotonicity assumption (see \cite{cox2013local} for a selection of examples) and the formulation of a version of the SMP for these systems could be a subject of a future work.\smallskip

In the first step of our analysis we have to deal with a partially-coupled system of the state equation and the first adjoint equation. The delicate question here consists in giving a precise meaning to the solution of the following backward SDE
\begin{equation}
dp_t = - \Big[D_x H(\bar{X}_t,\bar{u}_t,p_t,q_t) - rp_t \Big]dt + q_t dW_t,
\end{equation}
where $H(x,u,p,q) = \braket{p,b(x,u)} + \opn{Tr}\left[ q^T \sigma(x,u) \right] - f(x,u)$ is the Hamiltonian of the system and
$\lp \bar{X},\bar{u} \rp$ is an optimal pair.

In general, the behaviour at infinity of BSDEs is not easy to understand and different approaches and approximations are proposed in several papers. In our setting, we are able to tackle the problem showing that the adjoint equation preserves, in some sense, the dissipativity of the state. Using a duality argument and the same technique as in \cite{peng2000infinite} we can show that there exists a solution in some exponentially weighted space.
It turns out that for the analysis of the state and adjoint equations the condition on the discount factor (forming both the functional and the weight
$e^{-rt}$ in the definition of exponential weighted space) is given by some formula in terms of the joint monotonicity constant $c_p \in \R$. Nevertheless, due to the framework of SMP (use of the spike variation techniques) and, more importantly, due to the form of the polynomial
growth assumption one has to assume implicitly $r$ positive so that the polynomial bound is integrable with the weight. \smallskip

As already mentioned, the presence of the control in the diffusion term makes a second adjoint process to appear. In this case, there exists a formal matrix-valued BSDE which represents the process, but due to the lack of dissipativity of the equation, it seems to be impossible to obtain an a priori estimate of the solution. To solve the problem we follow the idea of Fuhrman et al. in \cite{fuhrman2013stochastic} and we define the first component of the second adjoint process $P_t$ as a bilinear form defined via the relation
\begin{equation}\label{eq:intro duality}
\braket{P_t \eta , \gamma} := \E^{\Fcal_t} \int_t^\infty e^{-r(s-t)} \braket{D_x^2 H(\bar{X}_s,\bar{u}_s,p_s,q_s) y^{t,\eta}_s ,y^{t,\gamma}_s} ds,
\end{equation}
where $( y^{t,\eta}_s )_{s \geq t}$ is the solution of the linearized state equation starting from $\eta$ at time $t$. Note that the second component $Q_t$ does not appear in the definition of the SMP. Proceeding this way it is not necessary to define and solve the second adjoint equation (i.e. finding the couple $(P,Q)$) but it is sufficient to identify only the process $P$ via the equality (\ref{eq:intro duality}). Notice that our definition of $P$ is similar to the notion of transposition solution presented in \cite{lu2012general}. Nevertheless, if the diffusion term $\sigma$ is Lipschitz, $P$ can be indeed identified as a unique solution to a matrix-valued BSDE which, in fact, inherits the monotonicity property from the state equation. The formulation of the SMP, in this case, follows by similar arguments as in \cite{orrieri2013stochastic} but with an extension to the infinite horizon setting. \smallskip

Having in mind the form of the Hamiltonian of the system, the final step (and the main result, Theorem \ref{main_thm}) is to derive a necessary condition for optimality. Let us suppose that $(\bar{X},\bar{u})$ is an optimal pair, then
for every $v \in U$, the following variational inequality has to hold $\mP \otimes dt-$a.e.
\begin{equation*}
H(\bar{X}_t,v,p_t,q_t) -  H(\bar{X}_t,\bar{u}_t,p_t,q_t) + \frac{1}{2}\sum^d_{j=1} \Big<P_t\left( \sigma^j(\bar{X}_t,v) - \sigma^j(\bar{X}_t,\bar{u}_t) \right), \sigma^j(\bar{X}_t,v) - \sigma^j(\bar{X}_t,\bar{u}_t)\Big> \le 0.
\end{equation*}
This variational inequality can be rewritten in terms of so called $\mathcal{H}$-function defined by
\begin{equation*}
\begin{split}
\mathcal{H}(x,u) &:= H(x,u,p_t,q_t) - \dfrac{1}{2}\opn{Tr}\bigl( \sigma(\bar{X}_t,\bar{u}_t)^TP_t\sigma(\bar{X}_t,\bar{u}_t) \bigr)\\
&\quad + \dfrac{1}{2}\opn{Tr}\bigl[ (\sigma(x,u) - \sigma(\bar{X}_t,\bar{u}_t))^TP_t(\sigma(x,u) - \sigma(\bar{X}_t,\bar{u}_t)) \bigr],
\end{split}
\end{equation*}
in the following manner

\begin{equation*}
\mathcal{H} \lp \bar{X}_t,\bar{u}_t \rp = \max_{v \in U} \mathcal{H}\lp \bar{X}_t, v \rp, \quad  \mP \otimes dt-\text{a.e.}
\end{equation*}

The paper is organized as follows. The second and third sections contain the basic assumptions, the formulation of the discounted problem and a list of motivating examples. In section 4, we study well posedness of the state equation. The fifth section concerns with the application of the spike variation technique to our problem. In the two next sections,  6 and 7, the two adjoint processes are studied. The first adjoint BSDE is solved by
approximation and a duality argument whereas, the construction of the process $P$ is described without any potential relation to some BSDE. A precise statement of the main theorem (Theorem \ref{main_thm}) is presented in section 8 and its proof is given. Last, in the Appendix we provide some technical proofs and we quote the actual restriction for the discount factor including the final one used in formulation of the main result.

\section{Assumptions and preliminaries}

Let $W = \lbrace{W^1_t,\ldots,W^d_t\rbrace}_{t\geq 0}$ be a standard $d$-dimensional Brownian motion defined on some complete filtered probability space $(\Omega, \mathcal{F}, \lp \mathcal{F}_t \rp _{t \geq 0}, \mP)$. The filtration $\lp \mathcal{F}_t \rp _{t \geq 0}$ is assumed to be the
canonical filtration of $W$ completed by $\mP-$null sets.
The space of control actions is a general metric space $U$ endowed with its Borel $\sigma$-algebra $\mathcal{B}(U)$. The class of admissible controls is defined as follows
\begin{equation*}
\mathcal{U}:= \br{u(\cdot): \R_+ \times \Omega \rightarrow U : u(\cdot) \text{ is } \lp \mathcal{F}_t \rp_{t\geq 0}-\text{progressive}}.
\end{equation*}

\noindent For $r \in \R,\ p > 1$ and a Banach space $E$, we define
\begin{align}
L^{p,-r}_{\mathcal{F}}(\R_+;E):= & \Big\{v(\cdot):\R_+ \times \Omega \rightarrow E: v(\cdot) \text{ is } \lp \mathcal{F}_t \rp_{t\geq 0}-\text{progressive} \Big. \nonumber \\
& \qquad \qquad \Big. \text{ and } \E\int_0^{\infty}e^{-rt}\norm{v_t}^p_E dt < \infty \Big\}.
\end{align}

\noindent We want to study an infinite horizon stochastic control problem in $\R^n$ of the form
\begin{equation}\label{SDE}
\begin{system}
dX_t = b(X_t,u_t)dt + \sigma(X_t,u_t)dW_t, \qquad \forall t\geq 0,\\
X_0 = x,
\end{system}
\end{equation}
where $x \in \R$ and $u(\cdot)$ is an admissible control. The discounted functional to be minimized is given by

\begin{equation}\label{functional}
J\lp u(\cdot)\rp  = \E \int^{+\infty}_0 e^{-rt} f(X_t,u_t)dt.
\end{equation}

\noindent By $|\cdot|$ we denote the Euclidean norm on $\R^n$, $\norm{\cdot}$ stands for a Frobenius norm on $\R^{n \times d}$ and, finally, $\norm{\cdot}_2$ denotes the Hilbert-Schmidt norm on $\R^{n \times n}$. By $\mathcal{S}^n$ we denote the set of symmetric matrixes $\R^{n \times n}$. $\chi_A$ denotes the characteristic function of a set $A$. \bigskip

{\bf{Hypotheses:}}

\begin{itemize}

\item[(H1)] $(U,d)$ is a separable metric space. \bigskip

\item[(H2)] ({\bf{Polynomial growth}}) The vector field $b: \R^n \times U \rightarrow \R^n$ is $\mathcal{B}(\R^n)\otimes \mathcal{B}(U)$-measurable and the map $x \mapsto b(x,u)$ is of class $\mathcal{C}^2$. Moreover, there exists $m\geq 0$ such that

\begin{equation} \label{eq:grad b}
\sup_{u \in U}\sup_{x \in \mathbb{R}^n} \frac{\abs{D^{\beta}_x b(x,u)} }{1+ \abs{x}^{2m+1}} < +\infty,\qquad \abs{\beta} =0,1,2.
\end{equation}

\item[(H3)] ({\bf{Polynomial growth}}) The mapping $\sigma: \R^n \times U \rightarrow \R^{n\times d} $ is measurable with respect to $\mathcal{B}(\R^n) \otimes \mathcal{B}(U)$. Moreover the map $x \mapsto \sigma(t,x,u)$ is $\mathcal{C}^2(\R^n;\R^{n\times d})$ and there exists $m$ (same as in (H2)) such that

\begin{equation} \label{eq:grad sigma}
\sup_{u\in U}\sup_{x \in \mathbb{R}^n} \frac{\norm{D^{\beta}_x \sigma(x,u)}_2 }{1+ \abs{x}^{m}} < +\infty,\qquad \abs{\beta} =0,1,2.
\end{equation}
\smallskip
\item[(H4)] ({\bf{Joint monotonicity}}) Let $p >0$. Then there exists $c_p \in \mathbb{R}$ such that,

\begin{equation} \label{eq:joint dissipativity}
\braket{D_xb(x,u)y,y} + p\norm{D_x\sigma(x,u)y}^2_2 \leq c_p \abs{y}^2, \qquad x,y \in \mathbb{R}^n, u \in U.
\end{equation}
\smallskip
\item[(H5)] ({\bf{Cost}}) The function $f: \R^n \times U \rightarrow \R$ is $\mathcal{B}(\R^n)\otimes \mathcal{B}(U)$-measurable and the map $x \mapsto f(x,u)$ is of class $\mathcal{C}^2$. Moreover, there exists $l \geq 0$ such that

\begin{equation} \label{eq:grad l}
\sup_{u \in U}\sup_{x \in \mathbb{R}^n} \frac{\abs{D^{\beta}_x f(x,u)}}{1+ \abs{x}^l} < +\infty,\qquad \abs{\beta} = 0,1,2.
\end{equation}

\end{itemize}

\begin{remark}
\normalfont
\begin{itemize}
\item[]
\item[(a)] In \cite{cerrai2001second}, the form of (H2) and (H3) is given in a stronger way. For our purposes the above formulation is sufficient.  \medskip

\item[(b)] It is possible to show that condition \eqref{eq:joint dissipativity} implies that for any $ x,y \in \mathbb{R}^n$
\begin{equation} \label{eq:joint dissipativity 2}
\braket{b(x,u) - b(y,u),x-y} + p \norm{\sigma(x,u) - \sigma(y,u)}^2_2 \leq c_p \abs{x-y}^2, \quad u \in U.
\end{equation}
and the two conditions are equivalent for coefficients twice differentiable (in $x$) which, in fact, is our case.\medskip

\item[(c)] The joint monotonicity condition \eqref{eq:joint dissipativity 2} also implies the so called coercivity condition (see i.e. \cite{prevot2007concise}, page 44). Indeed, let us fix $y= 0$, then \eqref{eq:joint dissipativity 2} reduces to
\begin{equation}
\braket{b(x,u) - b(0,u), x} + p\norm{\sigma(x,u)-\sigma(0,u)}_2^2 \leq c_p\abs{x}^2,
\end{equation}
and
\begin{equation*}
\begin{split}
\norm{\sigma(x,u)-\sigma(0,u)}_2^2 &\geq \big| \norm{\sigma(x,u)}_2 - \norm{\sigma(0,u)}_2\big|^2 \\
&= \norm{\sigma(x,u)}_2^2 + \norm{\sigma(0,u)}_2^2 - 2\norm{\sigma(x,u)}_2\norm{\sigma(0,u)}_2 \\
&\geq \norm{\sigma(x,u)}_2^2 + \norm{\sigma(0,u)}_2^2 - \varepsilon\norm{\sigma(x,u)}_2^2 - \dfrac{\norm{\sigma(0,u)}_2^2}{\varepsilon} \\
& = (1-\varepsilon)\norm{\sigma(x,u)}_2^2 + \norm{\sigma(0,u)}_2^2 - \dfrac{\norm{\sigma(0,u)}_2^2}{\varepsilon},\ \forall \varepsilon \in (0,1).
\end{split}
\end{equation*}
Then after easy computation we obtain that
\begin{equation}\label{eq:coercivity type estimate}
\braket{b(x,u),x} + p(1-\varepsilon)\norm{\sigma(x,u)}_2^2 \leq \tilde{K}_{p}(1+ \abs{x}^2),\quad \varepsilon \in (0,1).
\end{equation}
Let us note that $\abs{b(0,u)} + \abs{\sigma(0,u)} \leq C$ due to the polynomial growth (H2)-(H3), hence $\tilde{K}_{p}$ can be chosen as
$\tilde{K}_{p} = \max\{c_p + 1/2, C^2/2 ,1\}.$ \medskip

\item[(d)] The above Hypotheses (H2)-(H5) can be generalized to the situation of time dependent stochastic coefficients under natural assumptions without any influence on the main result.
\end{itemize}
\end{remark}

\section{Motivations and examples}
Apart from the classical Lipschitz setting, there are two usual sets of assumptions which assure global existence and uniqueness of the solution to an SDE. The first one consists in local Lipschitz property of the coefficients along with the so called coercivity condition
\begin{equation}\label{eq:global_coercivity}
\braket{b(x),x} + \frac{1}{2}\norm{\sigma(x)}_2^2 \leq K(1+ \abs{x}^2),
\end{equation}
for all $x \in \R^n$ and some $K \in \R$.
The second set comprises some dissipativity assumptions on the drift, still with Lipschitz diffusion term. The dissipativity is expressed by
\begin{equation*}
\braket{b(x)- b(y),x-y} \leq K\abs{x-y}^2,
\end{equation*}
for all $x,y \in \R^n$ and some $K \in \R$. \smallskip

Another step further in this direction is the so called global monotonicity assumption, which is formulated as a joint condition for drift and diffusion
\begin{equation}\label{eq_dissip_uncontrolled}
\braket{b(x) - b(y),x-y} + \frac{1}{2} \norm{\sigma(x) - \sigma(y)}^2_2 \leq c \abs{x-y}^2,
\end{equation}
for all $x,y \in \R^n$. It is important to mention that the joint monotonicity property immediately implies the dissipativity of the drift but not necessarily global Lipschitzianity of the diffusion part. Moreover, the joint monotonicity \eqref{eq_dissip_uncontrolled} also implies the coercivity property \eqref{eq:global_coercivity}.

It turns out that for the purposes of the SMP it is natural to strengthen the global monotonicity assumption in the following form:
for all fixed $p>0$ there exists $c_p \in \R$ such that
\begin{equation}\label{eq_dissip_uncontrolled1}
\braket{b(x) - b(y),x-y} + p\norm{\sigma(x) - \sigma(y)}^2_2 \leq c_p \abs{x-y}^2,
\end{equation}
for all $x,y \in \R^n$.
This is motivated by the attempt to solve not only the state equation, but also the first and second variation equations and to derive some appropriate estimates of higher moments of the solutions. Another natural assumption in this framework is the polynomial growth of the coefficients along with their derivatives. This is fundamental in order to choose the correct discount factor in the definition of the weighted spaces $L_{\mathcal{F}}^{2,-r}(\R_+; \R^n)$ that we are going to use. Let us also mention that these polynomial bounds allow us to prove the local Lipschitzianity of the coefficients of the state equation.

To conclude, notice that the freedom in choosing $p$ in the definition of \eqref{eq_dissip_uncontrolled1} implies the existence of a link between the growth of the diffusion term and the drift (compare (H2) and (H3), see also \cite{cerrai2001second}). For example, to gain a quadratic growth in the diffusion we have to require the system to be more dissipative. Concretely, one such an example is
\begin{equation}\label{eq_x^5}
dX_t = \left[ X_t - X_t^5\right]dt + X_t^2dW_t.
\end{equation}

A more general framework is presented in \cite{prevot2007concise} and \cite{lan2014new} where the authors do not ask for polynomial growth of the coefficients and present a \emph{weak local} version of the global monotonicity assumption along with a \emph{weak} coercivity assumption. By \emph{weak} we mean the presence of stochastic coefficients instead of constant ones in the definitions of the conditions, see \cite{prevot2007concise} for a detailed exposure. \medskip

\noindent After this preliminary discussion we also present some concrete models. \smallskip

\begin{enumerate}
\item {\bf Polynomial model:} As we have discussed above, a one dimensional model given by the SDE
\begin{equation*}
dX_t =  \left[ - X^{2m+1}_t + \sum_{i=1}^{2m} a_i X_t^i \right]dt  +  \left[ \sum_{i=1}^m b_i X^i_t\right] dW_t; \qquad  X_0 = x \in \R,
\end{equation*}
for some $a_i, b_i \in \R$, is the simplest example we have in mind. Let us notice that we can easily generalize the model in a way so that these polynomials are upper bounds for some more general (but locally Lipschitz) functions satisfying the joint monotonicity condition. \bigskip

\item {\bf Population growth models:} A model in $\R$ given by the SDE

\begin{equation*}
dX_t = \alpha X_t h\lp X_t \rp dt  + \sigma X_t dW_t; \qquad X_0 = x >0,
\end{equation*}
where $h(x) = 1 - \beta \ln(x)$, for so called Gompertz growth models (tumor growth models etc.) or
$h(x) =  1 - \beta x $, for so called logistic growth models (population dynamics models etc.). A detailed discussion of the controlled logistic model on infinite time horizon can be found \cite{maslowski2014sufficient}.
In both cases, $\alpha >0$ is the speed of growth and $\beta>0$ represents some saturation level of the system. It can be shown by Lyapunov techniques that the solution is positive and an explicitly analytic formula can be found by linearizing the two equations.
It is important to mention that our version of SMP covers the case of controlled logistic models (in full generality) whereas the controlled Gompertz model can be treated only with uncontrolled diffusion. This fact is due to the lack of polynomial growth condition needed in (H2) and the second variation equation might not be well posed. The same argument holds for another generalizations of the
two population models with different choices of diffusion term ($\sigma x (1-\ln(x)), \sigma \sqrt{x(1-\ln(x))}, \sigma \sqrt{x(1-x)}$ etc.). \bigskip

\item {\bf Gradient flow model with stochastic perturbation:} Let $K$ be a convex open subset of $\R^d$ and $\varphi: K \subseteq \R^d \rightarrow \R$ a $\lambda$-convex function, i.e.
\begin{equation*}
\varphi((1-\alpha)x_0 + \alpha x_1) \leq (1-\alpha)\varphi(x_0) + \alpha \varphi(x_1) - \frac{\lambda}{2}\alpha(\alpha -1)\abs{x_1 - x_0}^2,
\end{equation*}
for every  $x_0,x_1 \in \R^d$ and $\alpha \in [0,1]$.
Then we can study a SDE of the form
\begin{equation*}
dX_t = -\nabla\varphi(X_t)dt + \sigma(X_t)dW_t,
\end{equation*}
provided that $\varphi$ is of class $C^1$. In fact, $\lambda$-convexity (with $C^1$-regularity) is equivalent to
\begin{equation*}
\braket{\nabla\varphi(y),y-x} - \frac{\lambda}{2}\abs{y-x}^2 \geq \varphi(y) - \varphi(x) \geq \braket{\nabla\varphi(x),y-x} + \frac{\lambda}{2}\abs{y-x}^2
\end{equation*}
which in particular implies that $\nabla\varphi$ is $\lambda$-dissipative.
If we ask $\sigma$ to be Lipschitz, then \eqref{eq:joint dissipativity 2} is satisfied.

Some possible choices of $\varphi(\cdot)$ are:
\begin{itemize}
\item Take $\lambda = 0$ and $\varphi(x) = \abs{x}^{2k}$ convex with the derivative $2k\abs{x}^{2k-2}x$;
\item (Double-well potential) Let $d =1$ and consider $\varphi(x) = (x^2-1)^2$, which is not convex ($\pm 1$ are minima) but $\lambda$-convex.
\item Let $d=2$ and consider the following dynamics
\begin{equation*}
\begin{system}
dX_t = -X_tdt + X_tY_t^2(1+ X_t^2)^{-2}dt + \sigma dW_t^{(1)}, \\
dY_t = -Y_t(1 + X_t^2)^{-1}dt + \sigma dW_t^{(2)}.
\end{system}
\end{equation*}
Here the energy has the following form $\varphi = \varphi_{1} + \varphi_{2}$, where
\begin{equation*}
\varphi_{1}(x,y) = \frac{x^2}{2} \qquad \text{ and } \qquad \varphi_{2}(x,y)=\frac{y^2}{2(1+x^2)}.
\end{equation*}
The difference between this case and the previous one is that here, the energy has not isolated minima but rather forms a sub-manifold (i.e. the $x$-axis).
\end{itemize}
\end{enumerate}

\section{State equation}
In this section we provide the existence and uniqueness theorem for the state equation \eqref{SDE}.
The classical proof of such theorem under our Hypotheses (H1)-(H4) goes along the lines as in \cite{cerrai2001second}, Section 1.2. for a solution in the space $L^2_{\Fcal} \lp [0,T]; \R^n\rp$ (thus not in exponentially weighted space $L^{2,-r}_{\Fcal} \lp [0,T]; \R^n\rp$). Nevertheless, by these arguments one can not obtain a
contraction from $L^{2,-r}_{\Fcal} \lp [0,+\infty); \R^n\rp$ to itself (not even locally in time).
Hence an another approach has to be chosen. More specifically, we will scale the original equation in a way so that the classical result
from \cite{cerrai2001second} can be applied. We stress that, in this case, the bound for the discount factor depends on $c_p$ which
can be also negative.

\begin{theorem}\label{th:1.1}
Let Hypotheses (H1)-(H4) hold. Then for every admissible control $u(\cdot)$ there is a unique solution process $(X_t)_{t\geq0}$ to SDE (\ref{SDE}) with
$ \sup_{t \in [0,T]} \E \lbr e^{-rt}|X_t|^2 \rbr < +\infty, $ for each $T>0$ and for $r>2c_{1/2}$. Moreover, for all $q \geq 2$ and for $r>2c_{q-1}$ it holds
\begin{equation}\label{eq:state estimate 1}
\E \int^{\infty}_0 e^{-rtq} |X_t|^{2q} dt \leq C_1|x|^{2q},\quad \text{for some constant}\ C_1 = C_1(q)>0,
\end{equation}
where $c_{q-1}$ is the joint monotonicity constant in (\ref{eq:joint dissipativity}). In addition, for $q \in \left[ \frac{1}{2},2 \right)$ and $r>4c_1$, it holds
\begin{equation}\label{eq:state estimate 3}
\E \int^{\infty}_0 e^{-rt} |X_t|^{2q} dt \leq C_2 |x|^4, \quad \text{for some constant}\ C_2 = C_2(q)>0.
\end{equation}
\end{theorem}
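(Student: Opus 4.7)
My strategy separates into three pieces: (i) local existence on each finite interval using a known result, (ii) an It\^o-type energy argument for the main weighted estimate \eqref{eq:state estimate 1}, and (iii) a reduction of \eqref{eq:state estimate 3} to (ii). The main input throughout (ii) is the coercivity consequence \eqref{eq:coercivity type estimate} of the joint monotonicity.

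\textbf{Existence and uniqueness.} For each $T<\infty$, Hypotheses (H1)-(H4) match those of Section~1.2 in \cite{cerrai2001second}, producing a unique $(\Fcal_t)$-progressive continuous solution $X\in L^2_{\Fcal}([0,T];\R^n)$ with $\sup_{t\in[0,T]}\E|X_t|^2<\infty$; this immediately yields the first bound $\sup_{t\in[0,T]}\E[e^{-rt}|X_t|^2]<\infty$ for any $r\in\R$, in particular for $r>2c_{1/2}$. Uniqueness on every finite interval glues these local solutions into a single global $(\Fcal_t)$-progressive process. If one insists on running a Picard contraction directly in the weighted space $L^{2,-r}_{\Fcal}$, the pathwise rescaling $Y_t=e^{-\alpha t}X_t$ turns \eqref{SDE} into an equation of the same structure with a shifted monotonicity constant, to which the classical theorem on $[0,T]$ applies before translating back.

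\textbf{Bound \eqref{eq:state estimate 1} for $q\ge 2$.} The plan is to apply It\^o's formula to $e^{-rqt}|X_t|^{2q}$. From $D^2(|x|^{2q})=2q|x|^{2q-2}I+4q(q-1)|x|^{2q-4}xx^T$ and the Cauchy-Schwarz bound $|\sigma(X_t,u_t)^T X_t|^2\le\|\sigma(X_t,u_t)\|_2^2|X_t|^2$, the $dt$-part of $d|X_t|^{2q}$ is dominated by $2q|X_t|^{2q-2}\bigl[\langle X_t,b(X_t,u_t)\rangle+(q-\tfrac12)\|\sigma(X_t,u_t)\|_2^2\bigr]$. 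Inserting \eqref{eq:coercivity type estimate} with the parameter chosen so as to produce the monotonicity constant $c_{q-1}$ claimed in the theorem, and absorbing $|X_t|^{2q-2}\le\varepsilon|X_t|^{2q}+C_\varepsilon$ by Young's inequality, one arrives at
\begin{equation*}
d\bigl(e^{-rqt}|X_t|^{2q}\bigr)\le e^{-rqt}\bigl[(2qc_{q-1}+2q\varepsilon-rq)|X_t|^{2q}+C_\varepsilon\bigr]dt+dM_t,
\end{equation*}
for some local martingale $M$. A standard localization by stopping times, monotone convergence, and the hypothesis $r>2c_{q-1}$ (with $\varepsilon$ small enough to make the coefficient of $|X_t|^{2q}$ strictly negative) deliver \eqref{eq:state estimate 1} in its natural sharp form $\le C(1+|x|^{2q})$.

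\textbf{Bound \eqref{eq:state estimate 3} and main obstacle.} For $q\in[\tfrac12,2)$ the elementary pointwise bound $|x|^{2q}\le 1+|x|^4$ reduces matters to the case $q=2$ of \eqref{eq:state estimate 1} applied with weight parameter $r'=r/2$: the hypothesis $r>4c_1$ gives $r'>2c_1$, hence $\E\int_0^\infty e^{-rt}|X_t|^4\,dt=\E\int_0^\infty e^{-2r't}|X_t|^4\,dt\le C|x|^4$, while the residual $\int_0^\infty e^{-rt}\,dt=r^{-1}$ is absorbed into the final constant. The step I expect to require the most care is the preliminary bootstrap $\E\sup_{t\in[0,T]}|X_t|^{2q}<\infty$ needed to strip the stopping sequence from the It\^o argument; it follows from the polynomial growth (H2)-(H3), the Burkholder-Davis-Gundy inequality and the same coercivity bound, and is technical but routine.
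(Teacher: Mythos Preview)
Your argument is correct and follows essentially the same line as the paper: existence via Cerrai's result, the weighted moment bound via It\^o's formula combined with the coercivity consequence \eqref{eq:coercivity type estimate} of joint monotonicity, and the low-$q$ estimate by reduction to $q=2$. The paper packages the first two steps through the substitution $\tilde X_t=e^{-rt/2}X_t$ (so that the It\^o computation is carried out unweighted on $|\tilde X_t|^{2q}$ with monotonicity constant shifted to $c_p-r/2$), and handles \eqref{eq:state estimate 3} by H\"older rather than your pointwise bound $|x|^{2q}\le 1+|x|^4$, but these are cosmetic variations leading to the same estimate with the natural additive constant you flagged.
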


\begin{proof}
Without loss of generality we prove the result for uncontrolled case (which can be easily converted to the controlled one).
The idea is to apply the result from \cite{cerrai2001second} to a transformed equation which corresponds to our original equation.
Assume for a moment that we already have a process $X$ satisfying the dynamics given by \eqref{SDE} and define $\tilde{X}_t \equiv e^{-\frac{r}{2}t}X_t$. Then $\tilde{X}$ solves
\begin{equation}\label{eq:SDE tilde}
\begin{system}
d\tilde{X}_t = -\frac{r}{2}\tilde{X}_t +  e^{-\frac{r}{2}t} b\Bigl( e^{\frac{r}{2}t} \tilde{X}_t \Bigr)dt
+ e^{-\frac{r}{2}t}\sigma\Bigl( e^{\frac{r}{2}t} \tilde{X}_t \Bigr)dW_t, \qquad \forall t\geq 0,\\
\tilde{X}_0 = x.
\end{system}
\end{equation}

\noindent Denoting $\tilde{b}(t,x) = -\frac{r}{2}x +  e^{-\frac{r}{2}t} b\bigl( e^{\frac{r}{2}t} x \bigr)$ and
$\tilde{\sigma}(t,x) = e^{-\frac{r}{2}t} \sigma \bigl( e^{\frac{r}{2}t} x \bigr)$ it is easy to check that
$\tilde{b}, \tilde{\sigma}$ also satisfy assumptions (H1)-(H4). Differentiability and polynomial growth (H2)-(H3) are evident whereas joint
monotonicity (H4) holds in the following sense
\begin{align}\label{eq:forward transform1}
\braket{\tilde{b}(t,x) - \tilde{b}(t,y), x-y} + p ||\tilde{\sigma}(t,x) - \tilde{\sigma}(t,y) ||^2_2
\leq \tilde{c}_p |x-y|^2,\qquad \tilde{c}_p = c_p - \frac{r}{2}.
\end{align}
\noindent Hence, due to \cite{cerrai2001second}, Section 1.2 there exists a unique predictable process $\tilde{X}$ solving (\ref{eq:SDE tilde}) which satisfies $\sup_{t \in [0,T]} \E |\tilde{X}_t|^2 < +\infty,$ for each $T>0$. But this means that there actually exists a process $X$ solving \eqref{SDE} with the following integrability
property
\begin{equation}\label{eq:forward transform2}
\sup_{t \in [0,T]} \E \bigl[ e^{-rt}|X_t|^2 \bigr]= \sup_{t \in [0,T]} \E |\tilde{X}_t|^2 < +\infty,\qquad \forall T>0.
\end{equation}

\noindent The last step is proving the desired exponentially weighted integrability.
Denote the exit time $\tau_K := \inf \{t\geq 0: |\tilde{X}_t| \geq K \}$ for each $K >0$ with the usual convention $\inf\{\emptyset\} := +\infty$.
It is easy to show that $\tau_K \nearrow \tau =+\infty$, for $K\rightarrow +\infty$ a.s.\\
\newline
Proof of estimate (\ref{eq:state estimate 1}): Let us fix $q\geq2$ and apply the It\^o formula to $\abs{\tilde{X}_{t \wedge \tau_K}}^{2q}$. We also denote $\tilde{a}_s = \tilde{\sigma}(s, \tilde{X}_s)\tilde{\sigma}( s, \tilde{X}_s)^T$. Then we obtain
\begin{align}\label{eq:state estimate Ito}
\E \abs{\tilde{X}_{t \wedge \tau_K}}^{2q} &= |x|^{2q}
+ 2q\E \int^t_0 \chi_{\{\tau_K \geq s \} } \abs{\tilde{X}_s}^{2(q-1)}\left(\braket{\tilde{X}_s, \tilde{b}\left(s, \tilde{X}_s\right)}
+ \frac{1}{2} Tr\lsz \tilde{a}_s\psz \right) ds \nonumber \\
& \quad + 2q(q-1)\E \int^t_0 \chi_{\{\tau_K \geq s \} } \abs{\tilde{X}_s}^{2(q-2)}
Tr\lsz \tilde{a}_s \lp \tilde{X}_s \otimes \tilde{X}_s \rp \psz ds \nonumber \\
& \leq |x|^{2q} + 2q\tilde{K}_{q-1}\tilde{c}_{q-1}\E \int^t_0 \chi_{\{\tau_K \geq s \} } \lp \abs{\tilde{X}_s}^{2q} + 1 \rp ds \nonumber \\
& = |x|^{2q} + 2q\tilde{K}_{q-1}\tilde{c}_{q-1} (t \wedge \tau_K) + 2q\tilde{K}_{q-1}\tilde{c}_{q-1} \int^t_0 \E \abs{\tilde{X}_{s \wedge \tau_K}}^{2q} ds,
\end{align}
where we have used the joint monotonicity in the form of (\ref{eq:joint dissipativity 2}) and coercivity-type estimate (\ref{eq:coercivity type estimate}).
\noindent By Gronwall lemma it easily follows that
\begin{equation*}
\E \abs{\tilde{X}_{t \wedge \tau_K}}^{2q} \leq \lp |x|^{2q} + 2q\tilde{K}_{q-1}\tilde{c}_{q-1} (t \wedge \tau_K)  \rp e^{2q\tilde{K}_{q-1}\tilde{c}_{q-1} t}
\leq |x|^{2q} e^{2q\tilde{K}_{q-1} \lp  c_{q-1} - \frac{r}{2} \rp t}.
\end{equation*}
The last estimate can be made for $r \geq 2c_{q-1}$. Consequently, the final estimate reads
\begin{equation*}
\E \abs{\tilde{X}_t}^{2q} \leq |x|^{2q} e^{2q\tilde{K}_{q-1} \lp  c_{q-1} - \frac{r}{2} \rp t},
\end{equation*}
and it follows by Fatou. \smallskip
Expressed in terms of the process $X$ we have that for all $t\geq0$ it holds
\begin{equation}\label{eq:state moment estimate final}
\E \lbr e^{-rtq}\lav X_t\pav ^{2q} \rbr \leq |x|^{2q} e^{2q\tilde{K}_{q-1} \lp  c_{q-1} - \frac{r}{2} \rp t}.
\end{equation}
Now it is sufficient to integrate both sides of (\ref{eq:state moment estimate final}) on $[0,+\infty)$.
\smallskip

\noindent Proof of estimate (\ref{eq:state estimate 3}): Fix $q\geq2$ and
observe that $e^{-rt}\lav X_t\pav ^{2q} = e^{rt(q-1)}\abs{\tilde{X}_t}^{2q}$. Therefore, applying It\^o formula to
$e^{rt(q-1)}\abs{\tilde{X}_t}^{2q}$ gives
\begin{align*}
\E & \left[ e^{r(q-1)(t \wedge \tau_K)} \abs{\tilde{X}_{t \wedge \tau_K}}^{2q}\right] = |x|^{2q}
+ 2q\E \int^t_0 \chi_{\{\tau_K \geq s \} } e^{r(q-1)s}\abs{\tilde{X}_s}^{2(q-1)} \times \nonumber \\
& \quad \quad \quad \quad \times \left(\braket{\tilde{X}_s, \tilde{b}(s, \tilde{X}_s)}
+ \frac{1}{2} Tr\lsz \tilde{a}_s\psz \right) ds \nonumber \\
& \quad + 2q(q-1)\E \int^t_0 \chi_{\{\tau_K \geq s \} } e^{r(q-1)s} \abs{\tilde{X}_s}^{2(q-2)}
Tr\lsz \tilde{a}_s \lp \tilde{X}_s \otimes \tilde{X}_s \rp \psz ds \nonumber \\
& \quad + r(q-1)\E \int^t_0 \chi_{\{\tau_K \geq s \} } e^{r(q-1)s} \abs{\tilde{X}_s}^{2q}ds \nonumber \\
& \leq |x|^{2q} + 2q\tilde{K}_{q-1}\tilde{c}_{q-1}\E \int^t_0 \chi_{\{\tau_K \geq s \} } e^{r(q-1)s} \lp \abs{\tilde{X}_s}^{2q} + 1 \rp ds \nonumber \\
& \quad + r(q-1)\E \int^t_0 \chi_{\{\tau_K \geq s \} } e^{r(q-1)s} \abs{\tilde{X}_s}^{2q}ds \nonumber \\
& \leq |x|^{2q} + 2q\tilde{K}_{q-1}\tilde{c}_{q-1}(t \wedge \tau_K)  + \lp 2q\tilde{K}_{q-1}\tilde{c}_{q-1}
+ r(q-1)\rp \E \int^t_0 e^{r(q-1)(s \wedge \tau_K)}\abs{\tilde{X}_{s \wedge \tau_K}}^{2q} ds.
\end{align*}
Then, similarly as before, we obtain
\begin{equation}\label{eq:state estimate 2}
\E \int^{\infty}_0 e^{-rt} |X_t|^{2q} dt \leq C_2|x|^{2q}
,\quad \text{for } r>2q\tilde{K}_{q-1} c_{q-1},
\end{equation}
To conclude the proof observe that once we have obtained the estimates \eqref{eq:state estimate 2}  for $q \ge 2$, the case $q \in [1/2,2)$ easily follows by H\"older inequality.
Note that we have proved even more than stated in the assertion of the theorem. Nevertheless, such generality is not needed for the purposes of proving the SMP.
\end{proof}

\section{Spike variation and variation equations}
The derivation of the variational inequality needed for the formulation of SMP is based on expanding the difference of the functional
$J\lp u^{\varepsilon}(\cdot) \rp - J\lp \bar{u}(\cdot) \rp$ where $\bar{u}(\cdot)$ is an optimal control and $u^{\varepsilon}(\cdot)$
is its appropriate perturbation. Since the control variable is allowed to enter also the diffusion term, the expansion has to be carried out
up to second order due to the time irregularity of the noise. Therefore, two forward variation equations appear in our setting: first
order variaton process $y^{\varepsilon}$ being, in fact, a linearization of the state process, and the second order variation process
$z^{\varepsilon}$ coming from the second order expansion. We also stress that due to the estimation techniques used in the forthcoming proofs, we
often need the polynomial bound of coefficients to be integrable with the weight, which immediately implies that $r$ has to be a priori positive. \bigskip

\noindent Let $\varepsilon > 0$, $E_{\varepsilon} \subset \R_+$ be a set of measure $\varepsilon$ of the form $E_\varepsilon := \left[ t_0, t_0+\varepsilon \right]$, with $t_0\geq 0$ arbitrary but fixed, and $\bar{u}(\cdot)$ an optimal control. Define the spike variation $u^{\varepsilon}(\cdot)$
of $\bar{u}(\cdot)$ by the formula
\[
u^{\varepsilon}_t=
\begin{cases}
\bar{u}_t, & \quad \text{if } t \in \R_+ \setminus E_{\varepsilon}, \\
v, & \quad \text{if } t \in E_{\varepsilon},
\end{cases}
\]
where $v \in U$ is an arbitrary and fixed point. \smallskip

\noindent Let $(\bar{X}(\cdot),\bar{u}(\cdot))$ be a given optimal pair and $(X^{\varepsilon}(\cdot), u^{\varepsilon}(\cdot))$ satisfy the following
perturbed SDE

\begin{equation}\label{SDE:perturbed}
\begin{system}
dX^{\varepsilon}_t = b(X^{\varepsilon}_t,u^{\varepsilon}_t)dt + \sigma(X^{\varepsilon}_t,u^{\varepsilon}_t)dW_t, \\
X^{\varepsilon}_0 = x.
\end{system}
\end{equation}

\noindent Further, following the notation of Yong and Zhou \cite{Yo99}, we denote by $\delta\varphi_t$ the quantity $\varphi(\bar{X}_t,$ $u^{\varepsilon}_t) - \varphi(\bar{X}_t,\bar{u}_t)$, for a generic function $\varphi$. \bigskip

\noindent Now, let us begin studying the first variation equation
\begin{equation}\label{eq:firstVariation}
\begin{system}
dy^{\varepsilon}_t = D_xb(\bar{X}_t,\bar{u}_t)y^{\varepsilon}_tdt + \sum_{j=1}^{d}\bigl[ D_x\sigma^j(\bar{X}_t,\bar{u}_t)y^{\varepsilon}_t + \delta\sigma^j_t\bigr]dW^j_t, \\
y^{\varepsilon}_0 = 0,
\end{system}
\end{equation}

\noindent and the second variation equation

\begin{equation}\label{eq:secondVariation}
\begin{system}
dz^{\varepsilon}_t = \Bigl[ D_xb(\bar{X}_t,\bar{u}_t)z^{\varepsilon}_t + \delta b_t\chi_{E_{\varepsilon}}(t) + \dfrac{1}{2}D^2_x b(\bar{X}_t,\bar{u}_t)(y^{\varepsilon}_t)^2 \Bigr]dt\\
\qquad + \sum_{j=1}^{d}\Bigl[ D_x\sigma^j(\bar{X}_t,\bar{u}_t)z^{\varepsilon}_t + \delta (D_x\sigma^j_t)y^{\varepsilon}_t\chi_{E_{\varepsilon}}(t) + \dfrac{1}{2}D^2_x \sigma^j(\bar{X}_t,\bar{u}_t)(y^{\varepsilon}_t)^2\Bigr]dW^j_t, \\
z^{\varepsilon}_0 = 0,
\end{system}
\end{equation}
where we have adopted the notation

$$
{D^2_x b(\bar{X}_t,\bar{u}_t)(y^{\varepsilon}_t)^2} := \left(
\begin{array}{ccc}
\operatorname{Tr}\bigl[ D^2_x b^1(\bar{X}_t,\bar{u}_t)y^{\varepsilon}_t(y^{\varepsilon}_t)^T \bigr] \\
\vdots \\
\operatorname{Tr}\bigl[ D^2_x b^n(\bar{X}_t,\bar{u}_t)y^{\varepsilon}_t(y^{\varepsilon}_t)^T \bigr]
\end{array}
\right),
$$

$$
{D^2_x \sigma^j(\bar{X}_t,\bar{u}_t)(y^{\varepsilon}_t)^2} := \left(
\begin{array}{ccc}
\operatorname{Tr}\bigl[ D^2_x \sigma^{1j}(\bar{X}_t,\bar{u}_t)y^{\varepsilon}_t (y^{\varepsilon}_t)^T \bigr] \\
\vdots \\
\operatorname{Tr}\bigl[ D^2_x \sigma^{nj}(\bar{X}_t,\bar{u}_t)y^{\varepsilon}_t (y^{\varepsilon}_t)^T \bigr]
\end{array}
\right).
$$

\begin{theorem}\label{th:Variation equations}
Let Hypotheses (H1)-(H4) hold. Then there exist $r_1 \in \R,\ r_2 >0$ such that equation \eqref{eq:firstVariation} (equation \eqref{eq:secondVariation}, resp.) admits a unique solution $y^{\varepsilon} \in L^{2,-r_1}_{\mathcal{F}}(\R_+;\R^n)$ ( $z^{\varepsilon} \in L^{2,-r_2}_{\mathcal{F}}(\R_+;\R^n)$, resp.) for all admissible controls $u(\cdot) \in \mathcal{U}$.
\end{theorem}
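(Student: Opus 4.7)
Both equations \eqref{eq:firstVariation} and \eqref{eq:secondVariation} are linear in the unknown with random coefficients $D_xb(\bar X_t,\bar u_t)$, $D_x\sigma(\bar X_t,\bar u_t)$, and with forcing terms built from $\bar X$, $\delta\sigma$, $\delta b$ and (for the second equation) the first-variation process $y^\varepsilon$ itself. The plan is to mimic the rescaling argument of Theorem \ref{th:1.1}: set $\tilde y^\varepsilon_t := e^{-r_1 t/2}y^\varepsilon_t$ and $\tilde z^\varepsilon_t := e^{-r_2 t/2}z^\varepsilon_t$, so the rescaled equations carry an additional dissipative drift $-(r/2)\mathrm{Id}$ while the forcing is multiplied by $e^{-rt/2}$. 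For each fixed $T>0$ the classical linear-SDE theory invoked in \cite{cerrai2001second} yields a unique predictable solution with $\sup_{t\in[0,T]}\E|\tilde y^\varepsilon_t|^2<\infty$ (resp.\ for $\tilde z^\varepsilon$); the main task is then to upgrade these local-in-time solutions to a global-in-time weighted estimate, which by uniqueness produces a single solution on all of $\R_+$ lying in the required weighted space.

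\textbf{First variation.} Apply It\^o to $e^{-r_1 t}|y^\varepsilon_t|^2$, localize by $\tau_K:=\inf\{t:|y^\varepsilon_t|\ge K\}$, and use (H4) with some $p>1/2$ together with Young's inequality on the cross term $\sum_j\langle D_x\sigma^j y^\varepsilon,\delta\sigma^j\rangle$. Choosing the Young constant so that the coefficient of $\|D_x\sigma\,y^\varepsilon\|_2^2$ matches the $2p$ allowed by (H4), dissipativity absorbs both the drift and the quadratic variation of the noise, leaving
\begin{equation*}
\E\bigl[e^{-r_1 t}|y^\varepsilon_t|^2\bigr] + (r_1-2c_p)\int_0^t \E\bigl[e^{-r_1 s}|y^\varepsilon_s|^2\bigr]\,ds \le C\,\E\int_{E_\varepsilon} e^{-r_1 s}\|\delta\sigma_s\|_2^2\,ds.
\end{equation*}
The right-hand side is finite by (H3) and the weighted moment bounds for $\bar X$ in Theorem \ref{th:1.1}. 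Any choice $r_1>2c_p$ for some $p>1/2$ (so $r_1$ may still be negative if $c_p$ is) yields the $L^{2,-r_1}_{\mathcal F}(\R_+;\R^n)$-estimate; letting $K\to\infty$ by Fatou and gluing the local solutions by uniqueness gives the unique solution on $\R_+$.

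\textbf{Second variation.} The scheme is identical, but the forcing in \eqref{eq:secondVariation} now contains the pieces $\delta b_t\chi_{E_\varepsilon}$, $\delta(D_x\sigma^j_t)y^\varepsilon_t\chi_{E_\varepsilon}$ and the genuinely quadratic terms $\tfrac12 D_x^2 b(\bar X,\bar u)(y^\varepsilon)^2$ and $\tfrac12 D_x^2\sigma^j(\bar X,\bar u)(y^\varepsilon)^2$, whose polynomial growth is governed by $(1+|\bar X|^{2m+1})$ and $(1+|\bar X|^m)$ respectively. To prepare the ground, one should first obtain higher-moment bounds for $y^\varepsilon$ by applying It\^o to $e^{-\rho t}|y^\varepsilon_t|^{2q}$ for suitable $q\ge 2$; the joint-monotonicity/Young mechanism above, combined with the state estimate \eqref{eq:state estimate 1}, produces a bound of the form $\E\int_0^\infty e^{-\rho q t}|y^\varepsilon_t|^{2q}\,dt<\infty$ for $\rho$ exceeding a threshold depending on $c_{q-1}$. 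Then the entire forcing of the $z^\varepsilon$-equation lies in $L^{2,-r_2}_{\mathcal F}(\R_+;\R^n)$ after a H\"older pairing of appropriate powers of $\bar X$ (controlled by Theorem \ref{th:1.1}) against powers of $y^\varepsilon$ (controlled by the previous step), and the same It\^o/(H4)/Gronwall argument as for $y^\varepsilon$ yields the weighted estimate on $z^\varepsilon$ provided $r_2$ is large enough.

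\textbf{Main obstacle.} The delicate point is the simultaneous calibration of parameters in the second step: one must choose a large enough exponent $q$ for the moment bound on $y^\varepsilon$, a sufficiently large (necessarily positive) weight $r_2$, and monotonicity parameters $p$ in (H4), so that every H\"older pairing in the forcing closes and every dissipativity threshold is met at once. In particular the purely polynomial forcing $\delta b_t\chi_{E_\varepsilon}$ has no dissipative counterpart in the $z^\varepsilon$-equation, which is precisely what forces $r_2>0$ as announced; tracking the explicit lower bound on $r_2$ is exactly the bookkeeping the authors defer to the appendix, but the structural argument above closes the proof once that calibration is carried out.
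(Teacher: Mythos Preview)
Your proposal is correct and follows essentially the same route as the paper's own proof: linearity of the equations, joint monotonicity (H4) inherited by the linearized coefficients, and the rescaling/It\^o argument from Theorem~\ref{th:1.1}, with the second variation requiring the forcing terms $D_x^2 b(\bar X,\bar u)(y^\varepsilon)^2$ and $D_x^2\sigma^j(\bar X,\bar u)(y^\varepsilon)^2$ to land in the weighted space. The paper's proof is much terser---it simply observes that the forcing $\delta\sigma^j$ lies in $L^{2,-r}_{\mathcal F}$ for every $r$ (being supported on the bounded set $E_\varepsilon$), invokes Theorem~\ref{th:1.1} with $r_1>2c_{1/2}$, and for $z^\varepsilon$ just says one must choose $r_2$ large enough that the quadratic forcing is integrable---whereas you spell out the It\^o/Young mechanism and the need for higher moments of $y^\varepsilon$ (which the paper develops separately in Lemma~\ref{l:4.1} and Proposition~\ref{p:expansion}). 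Your threshold $r_1>2c_p$ for some $p>1/2$ is in fact slightly sharper than the paper's stated $r_1>2c_{1/2}$, since the Young splitting of the cross term forces the monotonicity index strictly above $1/2$.
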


\begin{proof}
Note first that \eqref{eq:firstVariation} and \eqref{eq:secondVariation} are linear equations in $y^{\varepsilon}$ and $z^{\varepsilon}$, respectively. The perturbation of the diffusion in \eqref{eq:firstVariation} belongs to $L^{2,-r}_{\mathcal{F}}(\R_+;\R^n)$ for every $r$,  and it is
independent on $y^{\varepsilon}$. Therefore, the joint monotonicity condition (\ref{eq:joint dissipativity 2}) holds and the proof follows by similar arguments as the proof of Theorem \ref{th:1.1}, for $r_1 > 2c_{1/2}$. Concerning the equation for $z^{\varepsilon}$, we have to choose $r_2$ such that $D^2_x b(\bar{X}_t,\bar{u}_t)(y^{\varepsilon}_t)^2,D^2_x \sigma^j(\bar{X}_t,\bar{u}_t)(y^{\varepsilon}_t)^2 \in L^{2,-r_2}_{\mathcal{F}}(\R_+;\R^n)$. Then existence and uniqueness of a solution follow.
\end{proof}

\begin{remark}\label{rem:y}
\normalfont
Let us note that, thanks to the linearity of the equation and due to the structure of the forcing term $\delta \sigma^j$, the solution $y^{\varepsilon}$ to the equation \eqref{eq:firstVariation} is identically zero for times $t \leq t_0$.
\end{remark}

\noindent In the rest of this section, we will often benefit from a general estimate of the solution to a linearized SDE given by the
following Lemma.

\begin{lemma}\label{l:4.1}
Let $Y \in L^{2,-r}_{\Fcal} \lp \R_+ ; \R^n \rp$ be a solution to the following linear SDE
\begin{equation} \label{eq:state eq Lemma 4.1}
\begin{system}
dY_t = \bigl( A_tY_t + \alpha_t \bigr)dt + \sum^d_{j=1} \bigl(B^j_tY_t + \beta^j_t  \bigr)dW^j_t, \nonumber \\
\;\,Y_0  = y_0,
\end{system}
\end{equation}
where $y_0 \in \R^n;$ $A, B^j : \R_+ \times \Omega \rightarrow \R^{n \times n}, \ \alpha, \beta^j : \R_+ \times \Omega \rightarrow \R^n, \ j=1,...,d,$ all are $(\Fcal_t)-$ progressively measurable processes. Let $k \ge 1$, $p>0$ and $c_p \in \R$
such that \smallskip

\begin{enumerate}[1.]
\item $\braket{A_t Y_t,Y_t} + p\sum^d_{j=1}|B^jY_t|^2 \leq c_p|Y_t|^2, \quad \mP \otimes dt-$a.e.

\item $ \int^{\infty}_0  \Bigl[ e^{-\frac{r}{2}t}\bigl( \E |\alpha_t|^{2k} \bigr)^{\frac{1}{2k}} +
 e^{-rt} \Bigl(\E \bigl( \sum^d_{j=1}|\beta^j_t|^2 \bigr)^{k} \Bigr)^{\frac{1}{k}}  \Bigr]dt < +\infty, \quad  1 \leq j \leq d, \text{ and } \,r > 2c_{2k-1}.$
\end{enumerate}

\noindent Then it holds
\begin{equation}\label{eq:l.4.1 sup estimate}
\sup_{t \in \R_+} e^{-rkt}\E\abs{Y_t}^{2k}
\leq K \Biggl[ \E|y_0|^{2k} + \Bigl( \int^{\infty}_0 e^{-\frac{r}{2}t}\bigl(\E |\alpha_t|^{2k}\bigr)^{\frac{1}{2k}}dt \Bigr)^{2k}
+ \Bigl( \int^{\infty}_0 e^{-rt}\Bigl( \E \bigl( \sum^d_{j=1}|\beta^j_t|^{2} \bigr)^{k}  \Bigr)^{\frac{1}{k}} dt \Bigr)^{k}  \Biggr],
\end{equation}
where $K = K(\delta)$, for some appropriate $\delta>0$ and $r > 2c_{2k-1}$.
\end{lemma}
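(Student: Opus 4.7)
The plan is to reduce to a dissipative problem by the gauge transformation $\hat Y_t := e^{-rt/2}Y_t$. This turns the target $\sup_t e^{-rkt}\E|Y_t|^{2k}$ into $\sup_t\E|\hat Y_t|^{2k}$, and the transformed equation reads
\begin{equation*}
d\hat Y_t = \bigl[(A_t-\tfrac{r}{2}I)\hat Y_t + e^{-rt/2}\alpha_t\bigr]dt + \sum_{j=1}^d \bigl[B^j_t\hat Y_t + e^{-rt/2}\beta^j_t\bigr]dW^j_t,\qquad \hat Y_0=y_0.
\end{equation*}
Condition 1 becomes $\langle (A_t-\tfrac{r}{2}I)y,y\rangle + p\sum_j|B^j_ty|^2 \le (c_p-\tfrac{r}{2})|y|^2$, and since $r>2c_{2k-1}$ the shifted constant $c_{2k-1}-r/2$ is strictly negative, which supplies the dissipativity needed to close the subsequent Gronwall argument.

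Next I would apply It\^o's formula to $|\hat Y_{t\wedge\tau_N}|^{2k}$, with $\tau_N := \inf\{t\ge 0:|\hat Y_t|\ge N\}$ inserted to justify the computation without assuming $2k$-integrability a priori, and take expectation to kill the local martingale. The drift produces three purely quadratic contributions in $\hat Y$: $2k|\hat Y|^{2k-2}\langle\hat Y,(A-\tfrac{r}{2})\hat Y\rangle$, $k|\hat Y|^{2k-2}\sum_j|B^j\hat Y|^2$, and the Hessian correction $2k(k-1)|\hat Y|^{2k-4}\sum_j\langle\hat Y,B^j\hat Y\rangle^2$. Dominating the latter by $2k(k-1)|\hat Y|^{2k-2}\sum_j|B^j\hat Y|^2$ through Cauchy--Schwarz and then invoking the (looser) joint monotonicity with $p=2k-1$ bounds the whole quadratic block by $2k(c_{2k-1}-r/2)|\hat Y|^{2k} =: -2k\mu|\hat Y|^{2k}$ with $\mu>0$.

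For the cross contributions generated by $\tilde\alpha_t := e^{-rt/2}\alpha_t$ and $\tilde\beta^j_t := e^{-rt/2}\beta^j_t$, I would treat the two genuinely mixed pieces $|\hat Y|^{2k-2}\langle B^j\hat Y,\tilde\beta^j\rangle$ and $|\hat Y|^{2k-4}\langle\hat Y,B^j\hat Y\rangle\langle\hat Y,\tilde\beta^j\rangle$ by Young's inequality with a small parameter, absorbing a fraction of $|\hat Y|^{2k-2}\sum_j|B^j\hat Y|^2$ into the dissipative bulk $-2k\mu|\hat Y|^{2k}$; what remains are $|\hat Y|^{2k-1}|\tilde\alpha|$ and $|\hat Y|^{2k-2}|\tilde\beta|^2$ (with $|\tilde\beta|^2=\sum_j|\tilde\beta^j|^2$) as genuine forcing terms. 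Applying H\"older pointwise in $\omega$, integrating in $s$, and writing $h(t):=\E|\hat Y_{t\wedge\tau_N}|^{2k}$, one arrives at
\begin{equation*}
h(t) \le \E|y_0|^{2k} + C\int_0^t h(s)^{\frac{2k-1}{2k}}\bigl(\E|\tilde\alpha_s|^{2k}\bigr)^{\frac{1}{2k}}ds + C\int_0^t h(s)^{\frac{k-1}{k}}\bigl(\E|\tilde\beta_s|^{2k}\bigr)^{\frac{1}{k}}ds,
\end{equation*}
after discarding the strictly negative $-2k\mu\int_0^t h(s)\,ds$ term on the left.

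To close, I would set $H:=\sup_{t\ge 0}h(t)$, which is finite thanks to the localization at $\tau_N$, pass to the supremum on both sides, and apply Young's inequality twice in the forms $H^{(2k-1)/(2k)}A\le\delta H+C_\delta A^{2k}$ and $H^{(k-1)/k}B\le\delta H+C_\delta B^k$; for $\delta$ small enough one absorbs the $H$-terms on the right into a fraction of $H$ on the left, yielding the estimate with constant $K=K(\delta)$. Letting $N\to\infty$ by Fatou and recalling that $\int_0^\infty(\E|\tilde\alpha_t|^{2k})^{1/(2k)}dt=\int_0^\infty e^{-rt/2}(\E|\alpha_t|^{2k})^{1/(2k)}dt$ and similarly for $\tilde\beta$ returns the statement in the original variables. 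The main obstacle is the book-keeping of exponents: one must track the H\"older--Young pairing so that the two forcing norms come out with precisely the prescribed powers $2k$ and $k$, and it is the Young parameter used in the final absorption that plays the role of the ``appropriate $\delta$'' in $K=K(\delta)$.
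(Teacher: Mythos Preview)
Your proposal is correct and follows essentially the same route as the paper: the gauge change $\hat Y_t=e^{-rt/2}Y_t$ is just a relabeling of the paper's direct application of It\^o to $e^{-rkt}|Y_t|^{2k}$, and both arguments then invoke the joint monotonicity with $p=2k-1$, bound the forcing cross terms via H\"older in $\omega$ followed by weighted Young, take the supremum in $t$, absorb, and pass to the limit by localization/Fatou. The only cosmetic difference is that the paper handles the mixed $B^j\hat Y$--$\tilde\beta^j$ terms by the cruder $|a+b|^2\le 2|a|^2+2|b|^2$ (yielding the factor $2k-1$ in one stroke) rather than your small-parameter Young splitting; note that your ``absorption into $-2k\mu|\hat Y|^{2k}$'' really means absorbing the extra $\epsilon|\hat Y|^{2k-2}\sum_j|B^j\hat Y|^2$ into the slack left by using $p=2k-1$ when only $p=(2k-1)/2$ is needed, not into the term $-2k\mu|\hat Y|^{2k}$ itself.
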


\begin{proof}
The proof will be given for all $B^j, \beta^j$'s bounded. Then the stochastic integral in the proof is a true (centered) martingale. The proof for the unbounded case follows immediately by standard localization and the Fatou lemma.\smallskip

Let $2k, k\geq 2$ and apply the It\^o formula to $e^{-rt}|Y_t|^{2k}$ on $[0,t]$. The case
$k \in [1/2,2)$ follows easily by the H\"{o}lder inequality.
\begin{equation}\label{eq:ito dual Y}
\begin{split}
\E \bigl(e^{-rkt}|Y_t|^{2k} \bigr) &= \E |y_0|^{2k} + 2k\E\int^t_0 e^{-rks} |Y_s|^{2k-2} \braket{A_s Y_s + \alpha_s,Y_s} ds  \\
& \quad + 2k(k-1)\sum^d_{j=1}\E\int^t_0 e^{-rks} |Y_s|^{2k-4} \braket{B^j_s Y_s + \beta^j_s,Y_s}^2 ds   \\
& \quad + k\sum^d_{j=1}\E\int^t_0 e^{-rks} |Y_s|^{2k-2} \braket{B^j_s Y_s + \beta^j_s,B^j_s Y_s + \beta^j_s} ds   \\
& \quad -rk\E\int^t_0 e^{-rks} |Y_s|^{2k}ds  \\
&\leq \E |y_0|^{2k} + 2k \E\int^t_0 e^{-rks} |Y_s|^{2k-2} \Bigl[ \underbrace{\braket{A_s Y_s,Y_s} + (2k-1)\sum^d_{j=1}|B^jY_s|^2 }_{\leq c_{2k-1}|Y_s|^2} \Bigr]ds  \\
& \quad + 2k\E\int^t_0 e^{-rks} |Y_s|^{2k-1}|\alpha_s|ds + 2k(2k-1)\E\int^t_0 e^{-rks} |Y_s|^{2k-2}\Bigl(\sum^d_{j=1}|\beta^j_s|^2 \Bigr)ds  \\
& \quad -rk\E\int^t_0 e^{-rks} |Y_s|^{2k}ds.
\end{split}
\end{equation}

\noindent Now, using H\"{o}lder and the weighted Young inequality $ab \leq \frac{a^p\delta^p}{p} + \frac{b^q}{q\delta^q},\ a,b\geq0, \delta >0$, the remaining terms can be treated as follows

\begin{equation}\label{eq:ito dual Y sup1}
\begin{split}
2k&\E\int^t_0 e^{-rks} |Y_s|^{2k-1}|\alpha_s|ds
\leq 2k\Bigl( \sup_{t \in \R_+} e^{-rkt}\E\left( |Y_t|^{2k-1}\right)^{\frac{2k}{2k-1} }\Bigr)^{\frac{2k-1}{2k}}
\int^{\infty}_0  e^{-\frac{r}{2} t}\bigl(\E|\alpha_t|^{2k} \bigr)^{\frac{1}{2k}}dt \\
& \leq (2k-1)\delta^{\frac{2k}{2k-1}}\Bigl( \sup_{t \in \R_+} e^{-rkt}\E |Y_t|^{2k} \Bigr) +
\frac{1}{\delta^{2k}} \Bigl( \int^{\infty}_0 e^{-\frac{r}{2} t} \bigl( \E|\alpha_t|^{2k} \bigr)^{\frac{1}{2k}}dt \Bigr)^{2k},
\end{split}
\end{equation}
and similarly
\begin{equation}\label{eq:ito dual Y sup2}
\begin{split}
2k&(2k-1)\E\int^t_0 e^{-rks} |Y_s|^{2k-2}\Bigl(\sum^d_{j=1}|\beta^j_s|^2 \Bigr)ds
\leq 2(k-1)(2k-1)\delta^{\frac{2k}{2k-2}}\Bigl( \sup_{t \in \R_+} e^{-rkt}\E |Y_t|^{2k} \Bigr) \\
& \quad + 2(2k-1)\frac{1}{\delta^{k}} \Biggl( \int^{\infty}_0 e^{-rt}\Bigl( \E\sum^d_{j=1}|\beta^j_t|^{2k} \Bigr)^{\frac{1}{k}}dt \Biggr)^{k}.
\end{split}
\end{equation}

\noindent The estimate (\ref{eq:l.4.1 sup estimate}) easily follows by substituting \eqref{eq:ito dual Y sup1} and  \eqref{eq:ito dual Y sup2} into \eqref{eq:ito dual Y}, by taking $\sup_{t\geq 0}$ on both sides
and finally by choosing $\delta>0$ such that $1- (2k-1)\delta^{\frac{2k}{2k-1}} -2(k-1)(2k-1)\delta^{\frac{2k}{2k-2}} >0$ and $r > 2c_{2k-1}$.
\end{proof}

\noindent Before proceeding, let us recall that by $\bar{X}$ and $X^{\varepsilon}$ we mean the solution to \eqref{SDE} in the space $L^{2,-r}_{\Fcal} \lp \R_+ ; \R^n \rp$, for $r >2c_{1/2}$, corresponding to $\bar{u}(\cdot)$ and $u^{\varepsilon}(\cdot)$, respectively. $y^{\varepsilon}$ and $z^{\varepsilon}$ are the solutions to \eqref{eq:firstVariation} and \eqref{eq:secondVariation}, respectively.

\begin{proposition}\label{p:expansion}
Suppose Hypotheses (H1)-(H4) hold and $r >2c_{1/2}$.
Define $\xi^{\varepsilon}_t:= X^{\varepsilon}_t - \bar{X}_t$, $\eta^{\varepsilon}_t:= \xi^{\varepsilon}_t - y^{\varepsilon}_t$ and $\zeta^{\varepsilon}_t:=\xi^{\varepsilon}_t - y^{\varepsilon}_t - z^{\varepsilon}_t, \ t\geq 0$. Then there exist $\rho_1, ...,\rho_5 >0$  such that for $k = 1,2, \ldots$ it holds
\begin{itemize}
\item[(i)] $\sup_{t \in \R_+} e^{-\rho_1 kt}\E\abs{\xi^{\varepsilon}_t}^{2k} = O(\varepsilon^k)$,
\item[(ii)] $\sup_{t \in \R_+} e^{-\rho_2 kt} \E\abs{y^{\varepsilon}_t}^{2k}= O(\varepsilon^k)$,
\item[(iii)] $\sup_{t \in \R_+} e^{-\rho_3 kt} \E\abs{z^{\varepsilon}_t}^{2k}= O(\varepsilon^{2k})$,
\item[(iv)] $\sup_{t \in \R_+} e^{-\rho_4 kt} \E\abs{\eta^{\varepsilon}_t}^{2k} = O(\varepsilon^{2k})$,
\item[(v)] $\sup_{t \in \R_+} e^{-\rho_5 kt} \E\abs{\zeta^{\varepsilon}_t}^{2k} = o(\varepsilon^{2k})$.
\end{itemize}
\end{proposition}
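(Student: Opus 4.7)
For each item I would write the process of interest as the solution of a linear SDE with random coefficients satisfying the joint monotonicity (H4) — possibly after expressing nonlinear differences of $b,\sigma$ as integrals of their derivatives — and then apply Lemma~\ref{l:4.1}, reducing the task to controlling the forcing terms via the polynomial bounds (H2)--(H3), Theorem~\ref{th:1.1}, and the previously proved items of the proposition. For (i) I write
\[
b(X^\varepsilon_t, u^\varepsilon_t) - b(\bar X_t, \bar u_t) = \tilde A_t\, \xi^\varepsilon_t + \delta b_t\, \chi_{E_\varepsilon}(t),\qquad \tilde A_t = \int_0^1 D_x b(\bar X_t + \lambda \xi^\varepsilon_t, u^\varepsilon_t)\, d\lambda,
\]
and analogously for $\sigma$; the averaged coefficients inherit (H4) pointwise, and in \eqref{eq:l.4.1 sup estimate} the forcing involves only $\delta b$ and $\delta\sigma$ restricted to $E_\varepsilon$. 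Since $|E_\varepsilon|=\varepsilon$, this yields an $O(\varepsilon^{2k})$ contribution from the $\alpha$-term and $O(\varepsilon^k)$ from the $\beta$-term, giving (i). Item (ii) is a direct application of Lemma~\ref{l:4.1} to \eqref{eq:firstVariation} with $\alpha\equiv 0$ and $\beta^j_t = \delta\sigma^j_t\chi_{E_\varepsilon}(t)$. For (iii), the forcing of \eqref{eq:secondVariation} splits into an $E_\varepsilon$-piece handled as in (i) and a quadratic piece $\tfrac12 D^2_x b(\bar X,\bar u)(y^\varepsilon)^2$; Cauchy--Schwarz (separating the polynomial factor from $y^\varepsilon$) together with (ii) shows its $L^{2k}$-norm is $O(\varepsilon^2)$, and choosing $\rho_3$ large enough absorbs the exponentials, yielding $O(\varepsilon^{2k})$.

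For (iv), I Taylor-expand
\[
b(X^\varepsilon, u^\varepsilon) - b(\bar X, \bar u) = D_x b(\bar X, \bar u)\xi^\varepsilon + \delta(D_x b)_t\chi_{E_\varepsilon}\xi^\varepsilon + R_b^{(1),\varepsilon} + \delta b_t\chi_{E_\varepsilon},
\]
with $R_b^{(1),\varepsilon} = \int_0^1(1-\lambda) D^2_x b(\bar X + \lambda\xi^\varepsilon, u^\varepsilon)(\xi^\varepsilon)^2\, d\lambda$, and similarly for $\sigma$. Subtracting the $y^\varepsilon$-equation, the $\delta\sigma\chi_{E_\varepsilon}$ piece cancels out and $\eta^\varepsilon$ satisfies a linear SDE with coefficients $D_x b(\bar X,\bar u),\ D_x\sigma^j(\bar X,\bar u)$ and a forcing built from the Taylor remainders and the $\chi_{E_\varepsilon}$-terms; each of these has $L^{2k}$-norm of order $\varepsilon$ by (i), polynomial growth and Cauchy--Schwarz, so the $2k$-th power in \eqref{eq:l.4.1 sup estimate} delivers (iv).

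For (v), expanding one order further gives, for the drift of $\zeta^\varepsilon$, the linear part $D_x b(\bar X,\bar u)\zeta^\varepsilon$ plus
\[
\delta(D_x b)\chi_{E_\varepsilon}\xi^\varepsilon + \tfrac12 \delta(D^2_x b)\chi_{E_\varepsilon}(\xi^\varepsilon)^2 + \tfrac12 D^2_x b(\bar X,\bar u)(\xi^\varepsilon - y^\varepsilon)(\xi^\varepsilon + y^\varepsilon) + \rho_b^\varepsilon,
\]
with
\[
\rho_b^\varepsilon = \int_0^1(1-\lambda)\bigl[D^2_x b(\bar X + \lambda\xi^\varepsilon, u^\varepsilon) - D^2_x b(\bar X, u^\varepsilon)\bigr](\xi^\varepsilon)^2\, d\lambda,
\]
and an analogous expression for the diffusion. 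The first three summands are $O(\varepsilon^{3/2})$ in $L^{2k}$ by combining (i), (iv) with polynomial bounds, hence contribute $o(\varepsilon^{2k})$ after raising to the $2k$-th power. The genuine $o$-improvement must come from $\rho_b^\varepsilon$; because $b,\sigma$ are merely $\mathcal{C}^2$, I cannot invoke a third-order Taylor term and must argue instead by dominated convergence, using that the integrand tends to zero pointwise by continuity of $D^2_x b$ while being dominated by $C(1+|\bar X|^{2m+1}+|\xi^\varepsilon|^{2m+1})|\xi^\varepsilon|^2$, whose $L^{2k}$-norm is $O(\varepsilon)$. The main obstacle is precisely this step: upgrading the pointwise $o$-information into a uniform-in-$t$ estimate $(\E|\rho_b^\varepsilon|^{2k})^{1/(2k)} = o(\varepsilon)$ compatible with the exponential weight $e^{-\rho_5 k t/2}$, which in turn dictates the admissible value of $\rho_5$.
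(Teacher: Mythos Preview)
Your plan matches the paper's proof essentially step for step: linearize each process via integral-form Taylor coefficients so that (H4) transfers to the random coefficients, then feed the forcing terms into Lemma~\ref{l:4.1} and estimate them by H\"older, the polynomial bounds (H2)--(H3), Theorem~\ref{th:1.1}, and the earlier items.

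One clarification on the concern you raise at the end of (v). You do \emph{not} need a uniform-in-$t$ bound $(\E|\rho_b^\varepsilon(t)|^{2k})^{1/2k}=o(\varepsilon)$; Lemma~\ref{l:4.1} only asks for the time integral $\int_0^\infty e^{-\rho_5 t/2}(\E|\rho_b^\varepsilon(t)|^{2k})^{1/2k}\,dt$ to be $o(\varepsilon)$. The paper obtains this exactly as you suggest: split by H\"older into $(\E|\xi^\varepsilon_t|^{8k})^{1/4k}$, which carries the factor $\varepsilon$ by (i), times $(\E|D_x^2 b(\bar X_t+\lambda\xi^\varepsilon_t,u^\varepsilon_t)-D_x^2 b(\bar X_t,u^\varepsilon_t)|^{4k})^{1/4k}$; the latter, integrated against $e^{-\rho_5 t/4}$, is dominated by an integrable function (polynomial growth plus large enough $\rho_5$) and tends to zero for each $t$ by continuity of $D_x^2 b$, so dominated convergence in $t$ gives the $o(1)$ factor. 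Thus the ``obstacle'' you flag dissolves once you read off from \eqref{eq:l.4.1 sup estimate} that only the integrated norm of the forcing is needed.
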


\begin{proof}
See Appendix.
\end{proof}

Before giving a preliminary expansion of the cost, we state the following
\begin{lemma}\label{l.expansion}
If $g \in C^2(\R^n;\R)$ then the following equality holds for every $x, \bar{x} \in \R^n$
\[ g(x) = g(\bar{x}) + \braket{D_xg(\bar{x}), x - \bar{x}} + \int^1_0 \braket{\theta D_x^2 g(\theta\bar{x} + (1-\theta)(x- \bar{x})),x-\bar{x}}d\theta.\]
\end{lemma}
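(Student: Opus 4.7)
The plan is to reduce the identity to a classical one-dimensional second-order Taylor formula with integral remainder, by parametrizing the segment joining $\bar{x}$ and $x$. Concretely, I would set $\phi(\theta) := g(\theta x + (1-\theta)\bar{x})$ for $\theta \in [0,1]$. Since $g \in C^2(\R^n;\R)$, the function $\phi$ is of class $C^2([0,1])$ with $\phi(0)=g(\bar{x})$, $\phi(1)=g(x)$, and, by the chain rule,
\[
\phi'(\theta)=\braket{D_xg(\theta x + (1-\theta)\bar{x}),\,x-\bar{x}}, \qquad \phi''(\theta)=\braket{D_x^2 g(\theta x + (1-\theta)\bar{x})(x-\bar{x}),\,x-\bar{x}}.
\]

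The second step is to apply the elementary scalar Taylor formula to $\phi$ on $[0,1]$. Writing $\phi'(\theta)=\phi'(0)+\int_0^\theta \phi''(s)\,ds$ and integrating over $[0,1]$, a direct application of Fubini's theorem (equivalently, an integration by parts on $\int_0^1\phi'(\theta)\,d\theta$) gives
\[
\phi(1)-\phi(0) \;=\; \phi'(0) + \int_0^1 (1-\theta)\,\phi''(\theta)\,d\theta.
\]

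The third step is purely cosmetic: substitute the explicit expressions for $\phi'(0)$ and $\phi''(\theta)$ and perform the change of variable $\theta \mapsto 1-\theta$ in the remainder integral. This converts the weight $(1-\theta)$ into $\theta$ and rewrites the argument of $D_x^2 g$ as the corresponding convex combination of $\bar{x}$ and $x$, producing the form displayed in the lemma.

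No real obstacle is expected: the result is nothing more than the classical second-order Taylor expansion with integral remainder applied along the segment $[\bar{x},x]$, using only the $C^2$-regularity of $g$. The only care required is bookkeeping on the parametrization, so that the prefactor $\theta$ and the argument of $D_x^2 g$ come out exactly in the form used in the subsequent expansion of the cost functional.
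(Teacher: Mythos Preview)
The paper states this lemma without proof, treating it as an elementary instance of Taylor's formula with integral remainder; your argument is exactly the standard derivation and is correct. One bookkeeping remark: your change of variable produces the argument $\theta\bar{x}+(1-\theta)x$ inside $D_x^2 g$, which is how the lemma is actually used in the proof of Proposition~\ref{p:cost:functional1} (with $\theta\bar{X}_t+(1-\theta)X^{\varepsilon}_t$), whereas the printed statement has $\theta\bar{x}+(1-\theta)(x-\bar{x})$ --- this appears to be a typographical slip in the paper rather than a flaw in your reasoning.
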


\begin{proposition}\label{p:cost:functional1}
The following expansion holds for the cost functional
\begin{equation}\label{eq:spike_cost1}
\begin{split}
J(u^{\varepsilon}(\cdot)) - J(\bar{u}(\cdot)) &= \E \int_0^{\infty} e^{-rt} \braket{D_xf(\bar{X}_t,\bar{u}_t), y^{\varepsilon}_t + z^{\varepsilon}_t }dt \\
& \quad + \E \int_0^{\infty} e^{-rt}\left[ \frac{1}{2}\braket{D_x^2f(\bar{X}_t,\bar{u}_t) y^{\varepsilon}_t, y^{\varepsilon}_t} + \delta f(\bar{X}_t,\bar{u}_t) \right]dt + o(\varepsilon),
\end{split}
\end{equation}
where the discount factor $r \geq \max_{i=1,...,5}\{ \rho_i \}$ and $\rho_i$ are the individual discount factors from Proposition \ref{p:expansion}.
\end{proposition}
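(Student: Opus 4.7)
The plan is to begin by splitting
\[
J(u^\varepsilon(\cdot)) - J(\bar u(\cdot)) \;=\; \E\!\int_0^\infty e^{-rt}\bigl[f(X^\varepsilon_t,u^\varepsilon_t)-f(\bar X_t,u^\varepsilon_t)\bigr]dt \;+\; \E\!\int_0^\infty e^{-rt}\,\delta f_t\,dt,
\]
so that the second summand already matches the $\delta f$ contribution in the conclusion. To the first summand I would apply Lemma~\ref{l.expansion} with $g(\cdot):=f(\cdot,u^\varepsilon_t)$, expanding around $\bar X_t$ with increment $\xi^\varepsilon_t$. This produces the linear piece $\braket{D_xf(\bar X_t,u^\varepsilon_t),\xi^\varepsilon_t}$ plus a quadratic remainder involving $D_x^2 f$ evaluated on the segment joining $\bar X_t$ and $X^\varepsilon_t$.

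For the linear piece I would insert the decomposition $\xi^\varepsilon = y^\varepsilon + z^\varepsilon + \zeta^\varepsilon$ and replace $u^\varepsilon_t$ by $\bar u_t$ inside $D_x f$. The latter replacement only affects $t\in E_\varepsilon$, where both $y^\varepsilon$ and $z^\varepsilon$ vanish at $t_0$ by Remark~\ref{rem:y} and have size $O(\varepsilon^{1/2})$ and $O(\varepsilon)$ respectively in $L^2$; combined with $|E_\varepsilon|=\varepsilon$ and the polynomial bound (H5) on $D_x f$, the resulting error is $o(\varepsilon)$. The $\zeta^\varepsilon$ contribution is handled by Cauchy--Schwarz: the factor $(\E|D_x f(\bar X_t,\bar u_t)|^2)^{1/2}$ is polynomially controlled by $(\E|\bar X_t|^{2l})^{1/2}$, which Theorem~\ref{th:1.1} keeps integrable against a suitable exponential weight, while Proposition~\ref{p:expansion}(v) gives $\E|\zeta^\varepsilon_t|^2 = o(\varepsilon^2)e^{\rho_5 t}$, so the whole weighted integral is $o(\varepsilon)$ provided $r$ is chosen large enough to absorb the extra exponential factors.

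For the quadratic remainder I would split the time integral into its pieces on $E_\varepsilon$ and on its complement. On $E_\varepsilon$ the local bound $\E|\xi^\varepsilon_t|^2 = O(\varepsilon)$ (following from the forcings $\delta b$, $\delta\sigma$ being supported on $E_\varepsilon$) together with $|E_\varepsilon|=\varepsilon$ already give $O(\varepsilon^2)$. On the complement, $u^\varepsilon_t=\bar u_t$ for free, and the intermediate evaluation point of $D_x^2 f$ can be collapsed to $\bar X_t$ by dominated convergence, using the polynomial-growth majorant $C(1+|\bar X_t|^l+|X^\varepsilon_t|^l)|\xi^\varepsilon_t|^2$ whose integrability against $e^{-rt}$ follows from Theorem~\ref{th:1.1} and Proposition~\ref{p:expansion}(i). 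Finally, writing $\xi^\varepsilon = y^\varepsilon + \eta^\varepsilon$ and expanding the quadratic form isolates $\tfrac{1}{2}\braket{D_x^2 f(\bar X_t,\bar u_t)y^\varepsilon_t,y^\varepsilon_t}$ plus a cross term bounded by $\|y^\varepsilon\|_{L^2}\|\eta^\varepsilon\|_{L^2}=O(\varepsilon^{3/2})$ and a self-term of size $O(\varepsilon^2)$, both $o(\varepsilon)$.

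The main technical obstacle is precisely this reduction of the second-order remainder to its clean form: because (H5) supplies only polynomial growth, and not a modulus of continuity, for $D_x^2 f$, one has to combine dominated convergence (as $\xi^\varepsilon\to 0$) with moment estimates in exponentially weighted norms. This is what forces the implicit requirement that the discount factor $r$ be large enough, in particular at least $\max_{i}\rho_i$, so that the polynomial factors in $\bar X$ and $X^\varepsilon$ arising from (H5) remain integrable against $e^{-rt}$ simultaneously with the exponential blow-up factors $e^{\rho_i t}$ carried by the estimates of Proposition~\ref{p:expansion}.
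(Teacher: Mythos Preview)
Your proposal is correct and follows essentially the same route as the paper: apply Lemma~\ref{l.expansion} to $f(\cdot,u^\varepsilon_t)$ around $\bar X_t$, isolate $\delta f$, decompose the linear term via $\xi^\varepsilon=y^\varepsilon+z^\varepsilon+\zeta^\varepsilon$ (handling the $\delta D_xf$ and $\zeta^\varepsilon$ errors with Proposition~\ref{p:expansion}), and reduce the quadratic remainder to $\tfrac{1}{2}\braket{D_x^2 f(\bar X_t,\bar u_t)y^\varepsilon_t,y^\varepsilon_t}$ by writing $\xi^\varepsilon=y^\varepsilon+\eta^\varepsilon$. The paper compresses this last reduction into a single line invoking Proposition~\ref{p:expansion}; your more explicit use of dominated convergence (factoring out the $O(\varepsilon)$ coming from $|\xi^\varepsilon|^2$ and letting the $D_x^2f$-difference go to zero under the polynomial majorant) is exactly the content behind that invocation.
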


\begin{proof}[Proof of Proposition \ref{p:cost:functional1}]
Thanks to Lemma \ref{l.expansion}, we have
\begin{equation}
\begin{split}
J(u^{\varepsilon}(\cdot)) - J(\bar{u}(\cdot)) &= \E \int_0^{\infty} e^{-rt} \left[ f(X^{\varepsilon}_t, u^{\varepsilon}_t) - f(\bar{X}_t,\bar{u}_t) \right] dt \\
&= \E \int_0^{\infty} e^{-rt} \left[ \delta f(\bar{X}_t,\bar{u}_t) + \braket{D_xf (\bar{X}_t,u^{\varepsilon}_t), \xi_t^{\varepsilon}} \right]dt \\
& \quad + \E \int_0^{\infty} e^{-rt} \int_0^1 \Big<\theta D_x^2 f(\theta \bar{X}_t + (1-\theta)X^{\varepsilon}_t, u^{\varepsilon}_t)\xi^{\varepsilon}_t,\xi^{\varepsilon}_t\Big> d\theta dt. \\
\end{split}
\end{equation}
Finally, Proposition \ref{p:expansion} gives
\begin{equation*}
\begin{split}
J(u^{\varepsilon}(\cdot)) - J(\bar{u}(\cdot)) &= \E \int_0^{\infty} e^{-rt} \Bigg[ \delta f(\bar{X}_t,\bar{u}_t) + \braket{ \delta D_xf(\bar{X}_t,\bar{u}_t), \xi^{\varepsilon}_t}\\
& \qquad + \braket{D_xf(\bar{X}_t,\bar{u}_t), y^{\varepsilon}_t + z^{\varepsilon}_t} + \braket{D_xf(\bar{X}_t,\bar{u}_t), \zeta^{\varepsilon}_t} \\
& \qquad + \int_0^1 \Big<\theta \left( D_x^2 f(\theta \bar{X}_t + (1-\theta)X^{\varepsilon}_t, u^{\varepsilon}_t) - D_x^2 f(\bar{X}_t, u^{\varepsilon}_t) \right) \xi^{\varepsilon}_t ,\xi^{\varepsilon}_t\Big>  d\theta \\
& \qquad + \frac{1}{2} \braket{ \delta D_x^2 f(\bar{X}_t, \bar{u}_t) \xi^{\varepsilon}_t,\xi^{\varepsilon}_t} + \frac{1}{2} \braket{D_x^2 f(\bar{X}_t, \bar{u}_t) y^{\varepsilon}_t, y^{\varepsilon}_t} \\
& \qquad + \frac{1}{2} \braket{D_x^2 f(\bar{X}_t, \bar{u}_t) \eta^{\varepsilon}_t, \xi^{\varepsilon}_t + y^{\varepsilon}_t} \Bigg] dt \\
&= \E \int_0^{\infty} e^{-rt} \braket{D_xf(\bar{X}_t,\bar{u}_t), y^{\varepsilon}_t + z^{\varepsilon}_t }dt \\
& \quad + \E \int_0^{\infty} e^{-rt}\left[ \frac{1}{2}\braket{D_x^2f(\bar{X}_t,\bar{u}_t) y^{\varepsilon}_t, y^{\varepsilon}_t} + \delta f(\bar{X}_t,\bar{u}_t) \right]dt + o(\varepsilon),
\end{split}
\end{equation*}
which completes the proof.
\end{proof}

\section{First adjoint equation}
The first adjoint process naturally arises as a solution to
an appropriate BSDE whose driver can be obtained by differentiating the Hamiltonian function with respect to
the state variable $x$. In some sense, the first adjoint process is dual to the linearized state equation \eqref{eq:firstVariation} and it can have the interpretation of
generalized (in the sense of time-dependent and stochastic) Lagrange multipliers. In the classical setting for BSDEs, the
terminal condition is given a priori. On the contrary, here the BSDE is solved on infinite time horizon and the behaviour at infinity is not known.
Yet the existence and uniqueness result can be derived for processes being in some exponentially-weighted $L^2$ space.

In our case, the first order adjoint equation on infinite time horizon has the following form
\begin{equation}\label{eq.adjoint_first}
dp_t = - \Big[ D_xb(\bar{X}_t,\bar{u}_t)^T p_t  + D_x\sigma(\bar{X}_t,\bar{u}_t)^T q_t -D_xf(\bar{X}_t,\bar{u}_t) - rp_t \Big]dt + q_t dW_t,
\end{equation}
where $D_x\sigma(\bar{X}_t,\bar{u}_t)^T q_t := \sum_{j=1}^d D_x\sigma^j(\bar{X}_t,\bar{u}_t)^T q^j_t \in \R^n$ and
$(\bar{X}_t,\bar{u}_t)$ is an optimal pair.\\

\noindent Let us start the analysis by proving an a priori estimate for the difference of solutions to (\ref{eq.adjoint_first}). To do so, the following estimate will be of a particular interest since it allows to transfer the joint monotonicity property to the BSDE
\begin{equation}\label{eq:estimate.Dsigmaq}
\begin{split}
&\braket{D_x\sigma(\bar{X}_t,\bar{u}_t)^T q_t,p_t} = \sum_{j=1}^d\braket{D_x\sigma^j(\bar{X}_t,\bar{u}_t)^T q^j_t,p_t} = \sum_{j=1}^d\braket{q^j_t,D_x\sigma^j(\bar{X}_t,\bar{u}_t)p_t} \\
&\qquad \qquad \leq \frac{1}{2}\sum_{j=1}^d \abs{q_t^j}^2 + \frac{1}{2}\sum_{j=1}^d \abs{D_x\sigma^j(\bar{X}_t,\bar{u}_t)p_t}^2
\leq \frac{1}{2}||q_t||^2 + \frac{1}{2} \norm{D_x\sigma(\bar{X}_t,\bar{u}_t)p_t}_2^2. \\
\end{split}
\end{equation}

\begin{lemma}\label{l:apriori.backward}
Let $(p^1,q^1)$ and $(p^2,q^2)$ be two solutions to BSDE \eqref{eq.adjoint_first}
belonging to the space $L^{2,-r}_{\mathcal{F}}(\R_+; \R^n \times \R^{n\times d})$, for $r > 2c_{1/2}$,
corresponding to $f = f^1$ and $f = f^2$. Then the following estimate holds true
\begin{equation*}
\begin{split}
\E \int_0^{\infty} e^{-rt}&\left((r-2c_{1/2}-\delta)\abs{p^1_t - p^2_t}^2 + \frac{1}{2}||q^1_t - q^2_t||^2\right) dt \\
&\leq \frac{1}{\delta} \E \int_0^{\infty} e^{-rt}\abs{D_xf^1(\bar{X}_t,\bar{u}_t) - D_xf^2(\bar{X}_t,\bar{u}_t)}^2 dt,
\end{split}
\end{equation*}
\end{lemma}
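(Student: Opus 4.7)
The estimate is an a priori BSDE bound obtained by applying It\^o's formula to the discounted square of the difference. Write $\Delta p := p^1 - p^2$, $\Delta q := q^1 - q^2$ and $\Delta D_x f_t := D_xf^1(\bar X_t,\bar u_t) - D_xf^2(\bar X_t,\bar u_t)$. By linearity of \eqref{eq.adjoint_first} the pair $(\Delta p,\Delta q)$ satisfies
\begin{equation*}
d\Delta p_t = -\bigl[D_xb(\bar X_t,\bar u_t)^T\Delta p_t + D_x\sigma(\bar X_t,\bar u_t)^T\Delta q_t - \Delta D_xf_t - r\Delta p_t\bigr]dt + \Delta q_t\, dW_t.
\end{equation*}

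First I would apply It\^o's formula to $e^{-rt}|\Delta p_t|^2$ on an interval $[0, T\wedge \tau_n]$, where $\tau_n$ is a standard localising sequence that turns the $dW$ stochastic integral into a true martingale. Taking expectations, the $\pm r|\Delta p|^2$ contributions coming from the discount factor and from the $-r\Delta p$ term in the driver cancel, and I arrive at the identity
\begin{equation*}
\begin{split}
\E\!\left[e^{-r(T\wedge \tau_n)}|\Delta p_{T\wedge \tau_n}|^2\right] - |\Delta p_0|^2 &= \E\!\int_0^{T\wedge \tau_n}\! e^{-rs}\Bigl[ r|\Delta p_s|^2 - 2\braket{D_xb\, \Delta p_s,\Delta p_s} \\
&\quad - 2\braket{\Delta p_s, D_x\sigma^T\Delta q_s} + 2\braket{\Delta p_s,\Delta D_xf_s} + \|\Delta q_s\|^2\Bigr] ds.
\end{split}
\end{equation*}
After sending $n\to \infty$ by monotone convergence and a dominated/Fatou argument (standard once the localisation is in place) one keeps the identity with $T\wedge \tau_n$ replaced by $T$.

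The heart of the proof is then to bound the bracketed integrand from below. I would combine three ingredients: (i) the joint monotonicity \eqref{eq:joint dissipativity} applied to $y=\Delta p$, which gives $-2\braket{D_xb\,\Delta p,\Delta p}\ge -2c_{1/2}|\Delta p|^2 + \|D_x\sigma\,\Delta p\|_2^2$; (ii) the estimate \eqref{eq:estimate.Dsigmaq} (or a weighted Young inequality of the same type), producing $-2\braket{\Delta p, D_x\sigma^T\Delta q}\ge -\|\Delta q\|^2 - \|D_x\sigma\,\Delta p\|_2^2$ so that the two contributions involving $\|D_x\sigma\,\Delta p\|_2^2$ cancel; (iii) the weighted Young inequality $2\braket{\Delta p,\Delta D_xf}\ge -\delta|\Delta p|^2 - \delta^{-1}|\Delta D_xf|^2$. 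Putting these together one gets, pointwise in $(s,\omega)$, a lower bound of the form $(r-2c_{1/2}-\delta)|\Delta p_s|^2 + \tfrac12 \|\Delta q_s\|^2 - \tfrac{1}{\delta}|\Delta D_xf_s|^2$ (choosing the Young weight in step (ii) so as to leave exactly a factor $\tfrac{1}{2}$ in front of $\|\Delta q\|^2$). Substituting back into the identity and discarding the non-positive term $-|\Delta p_0|^2$ on the right-hand side yields
\begin{equation*}
\E\!\int_0^T e^{-rs}\Bigl[(r-2c_{1/2}-\delta)|\Delta p_s|^2 + \tfrac{1}{2}\|\Delta q_s\|^2\Bigr] ds \le \E\!\left[e^{-rT}|\Delta p_T|^2\right] + \tfrac{1}{\delta}\E\!\int_0^T e^{-rs}|\Delta D_xf_s|^2\, ds.
\end{equation*}

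The last step, and the only genuinely infinite-horizon point, is to let $T\to\infty$. Since $p^1,p^2\in L^{2,-r}_{\mathcal F}(\R_+;\R^n)$, the function $T\mapsto e^{-rT}\E|\Delta p_T|^2$ is non-negative with finite integral over $\R_+$, hence $\liminf_{T\to\infty} e^{-rT}\E|\Delta p_T|^2 = 0$; in particular there is a sequence $T_k\to\infty$ along which the boundary term tends to $0$. Passing to the limit along this sequence on the right and applying monotone convergence on the left (the integrand there is non-negative as soon as $r>2c_{1/2}+\delta$, which one may assume by choosing $\delta$ small enough), one obtains the claimed inequality. The main obstacle is precisely this boundary-at-infinity issue: in the finite-horizon BSDE setting the terminal value would be prescribed, whereas here one has to extract vanishing behaviour from the sole information that $\Delta p$ lies in the exponentially weighted space $L^{2,-r}_{\mathcal F}(\R_+;\R^n)$.
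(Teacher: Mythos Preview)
Your proposal is correct and follows essentially the same route as the paper: apply It\^o's formula to $e^{-rt}\abs{\Delta p_t}^2$, use the estimate \eqref{eq:estimate.Dsigmaq} together with the joint monotonicity (H4) for $p=1/2$ and a weighted Young inequality on the $\Delta D_xf$ term, then rearrange. You are in fact more careful than the paper about the localisation and about extracting a sequence $T_k\to\infty$ along which the boundary contribution $e^{-rT}\E\abs{\Delta p_T}^2$ vanishes from the $L^{2,-r}$ integrability of $\Delta p$; the paper writes the identity directly on $[0,\infty)$ without discussing this point.
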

where $\delta >0$ is sufficiently small.
\begin{proof}
Applying It\^{o} formula to $e^{-rt}\abs{p^1_t - p^2_t}^2$ gives
\begin{equation*}
\begin{split}
&\abs{p_0^1 - p_0^2}^2 + \E \int_0^{\infty}e^{-rt}\left(-r\abs{p^1_t - p^2_t}^2 + ||q^1_t - q^2_t||^2 \right)dt \\
&\quad = \E \int_0^{\infty} 2 e^{-rt} \braket{p^1_t - p^2_t, D_xb(\bar{X}_t,\bar{u}_t)^T (p^1_t - p_t^2)  + D_x\sigma(\bar{X}_t,\bar{u}_t)^T (q^1_t - q^2_t)}dt\\
& \quad \quad +\E \int_0^{\infty}2 e^{-rt} \braket{p^1_t - p^2_t,- r(p^1_t - p^2_t) + D_xf^1(\bar{X}_t,\bar{u}_t) - D_xf^2(\bar{X}_t,\bar{u}_t) }dt \\
&\quad \leq \E \int_0^{\infty} 2 e^{-rt} \left( \braket{p^1_t - p^2_t, D_xb(\bar{X}_t,\bar{u}_t)^T (p^1_t - p_t^2)} + \frac{1}{2} \norm{D_x\sigma(\bar{X}_t,\bar{u}_t)(p^1_t - p^2_t)}_2^2 \right)dt\\
&\quad \quad + \E \int_0^{\infty} 2 e^{-rt}\left(\frac{1}{2}||q^1_t - q^2_t||^2   -r\abs{p^1_t - p^2_t}^2 \right) dt \\
&\quad \quad + \E \int_0^{\infty} 2 e^{-rt}\left( \frac{\delta}{2}\abs{p^1_t - p^2_t}^2 + \frac{1}{2\delta}\abs{D_xf^1(\bar{X}_t,\bar{u}_t) - D_xf^2(\bar{X}_t,\bar{u}_t)}^2 \right)dt\\
&\quad \leq \E \int_0^{\infty} e^{-rt}\left[\left(2c_{1/2} - 2r + \delta \right)\abs{p^1_t - p^2_t}^2 + \frac{1}{2}||q^1_t - q^2_t||^2 \right] dt \\
&\quad \quad +\E \int_0^{\infty} e^{-rt} \frac{1}{\delta}\abs{D_xf^1(\bar{X}_t,\bar{u}_t) - D_xf^2(\bar{X}_t,\bar{u}_t)}^2 dt,
\end{split}
\end{equation*}
where we have used the estimate \eqref{eq:estimate.Dsigmaq}, joint monotonicity assumption (H4) and weighted Young inequality. The conclusion easily follows.
\end{proof}

\noindent Before giving the proof of existence and uniqueness for the first adjoint equation we produce a preliminary result in finite time horizon $T>0$. Let us consider the following equation:
\begin{equation}\label{eq:first.adjoint.finite.horizon}
\begin{system}
dp_t = - \Big[ D_xb(\bar{X}_t,\bar{u}_t)^T p_t  + D_x\sigma(\bar{X}_t,\bar{u}_t)^T q_t -D_xf(\bar{X}_t,\bar{u}_t) - rp_t \Big]dt + q_t dW_t,\\
p_T = 0,
\end{system}
\end{equation}
where $T>0$ is arbitrary but fixed.

\noindent As far as we know, no results in the literature can be used to solve this equation due to the polynomial growth of $D_x\sigma(\bar{X}_t,\bar{u}_t)^T$ in front of $q_t$. In order to produce existence of a solution to such equation we exploit some duality arguments.

\begin{theorem}\label{thm:first adjoint approx}
Under Hypotheses (H1)-(H5) equation \eqref{eq:first.adjoint.finite.horizon} admits a unique solution $(p,q)$ which belongs to $ \in L^{2}_{\mathcal{F}}([0,T]; \R^n)$ $\times L^{2}_{\mathcal{F}}([0,T]; \R^{n\times d})$, for each $T>0$.
\end{theorem}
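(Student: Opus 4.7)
The main obstacle is that classical Pardoux-Peng theory does not apply directly to \eqref{eq:first.adjoint.finite.horizon}: the coefficient $D_x\sigma(\bar{X}_t,\bar{u}_t)$ multiplying $q_t$ only has polynomial growth in $\bar{X}_t$, hence is unbounded in $(t,\omega)$, so the driver fails to be uniformly Lipschitz in $q$. My plan is to regularize by stopping, solve the truncated BSDE with the classical theorem, and pass to the limit through the joint-monotonicity based a priori estimate already used in Lemma~\ref{l:apriori.backward}.

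For $N\ge 1$ set $\tau_N:=\inf\{t\in[0,T]:|\bar{X}_t|\ge N\}\wedge T$ and define the truncated coefficients
\[
A^N_t:=D_xb(\bar{X}_t,\bar{u}_t)\chi_{\{t\le\tau_N\}},\ B^{N,j}_t:=D_x\sigma^j(\bar{X}_t,\bar{u}_t)\chi_{\{t\le\tau_N\}},\ \phi^N_t:=D_xf(\bar{X}_t,\bar{u}_t)\chi_{\{t\le\tau_N\}}.
\]
The polynomial bounds (H2)-(H3), (H5) together with $|\bar{X}_{t\wedge\tau_N}|\le N$ show that $A^N$, $B^{N,j}$ are uniformly bounded and $\phi^N\in L^2_{\Fcal}([0,T];\R^n)$. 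Hence the truncated BSDE
\[
dp^N_t=-\bigl[(A^N_t)^Tp^N_t+\textstyle\sum_j (B^{N,j}_t)^Tq^{N,j}_t-\phi^N_t-rp^N_t\bigr]dt+q^N_t\,dW_t,\quad p^N_T=0,
\]
admits a unique solution $(p^N,q^N)\in L^2_{\Fcal}([0,T];\R^n)\times L^2_{\Fcal}([0,T];\R^{n\times d})$ by Pardoux-Peng. A convenient by-product of the truncation is that on $[\tau_N,T]$ the equation reduces to $dp^N=rp^N dt+q^N dW$ with $p^N_T=0$, forcing $p^N\equiv 0$ and $q^N\equiv 0$ past $\tau_N$.

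Because the truncation preserves (H4) with the original constant $c_{1/2}$ (the coefficients are either the original ones or zero), the It\^o computation of Lemma~\ref{l:apriori.backward}, run on $[0,T]$ with terminal datum $p_T=0$, gives, for $r>2c_{1/2}$ and small $\delta>0$,
\[
\E\int_0^T e^{-rt}\Bigl[(r-2c_{1/2}-\delta)|p^N_t-p^M_t|^2+\tfrac{1}{2}\|q^N_t-q^M_t\|^2\Bigr]dt\le \tfrac{1}{\delta}\,\E\int_0^T e^{-rt}|F^{M,N}_t|^2\,dt,
\]
where $F^{M,N}$ gathers the mismatches $(A^N-A^M)^Tp^M$, $(B^N-B^M)^Tq^M$ and $\phi^N-\phi^M$. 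Since $p^M=q^M=0$ on $\{t>\tau_M\}$ and the differences $A^N-A^M$, $B^N-B^M$ are supported precisely on $\{\tau_M<t\le\tau_N\}\subset\{t>\tau_M\}$, the first two contributions vanish and $F^{M,N}_t=-D_xf(\bar{X}_t,\bar{u}_t)\chi_{\{\tau_M<t\le\tau_N\}}$. Continuity of $\bar{X}$ gives $\tau_M\uparrow T$ a.s.; combined with the polynomial growth of $D_xf$ and the moment estimates of Theorem~\ref{th:1.1} one has $\E\int_0^T e^{-rt}|D_xf(\bar{X}_t,\bar{u}_t)|^2dt<+\infty$, so dominated convergence forces the right-hand side to zero as $M,N\to\infty$. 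Thus $(p^N,q^N)$ is Cauchy in $L^2_{\Fcal}([0,T];\R^n)\times L^2_{\Fcal}([0,T];\R^{n\times d})$ and converges to some $(p,q)$.

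Passing to the limit in the integral form of the truncated BSDE yields \eqref{eq:first.adjoint.finite.horizon}: $\chi_{\{t\le\tau_N\}}\to 1$ pointwise, and H\"older together with the polynomial growth of $D_xb,\,D_x\sigma,\,D_xf$ and Theorem~\ref{th:1.1} implies that each drift integrand converges in $L^1(dt\otimes\mP)$ and the martingale term in $L^2$. Uniqueness follows once more from the same a priori estimate: for two solutions with identical generator the source term in the difference equation vanishes, hence $p^1\equiv p^2$ and $q^1\equiv q^2$. The delicate step is really the a priori estimate itself, since it must absorb the cross term $\braket{B^T q,p}$ through a Young inequality and simultaneously exploit the joint monotonicity of $D_xb$ and $D_x\sigma$; everything else is essentially bookkeeping.
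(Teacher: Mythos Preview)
Your proof is correct and takes a genuinely different route from the paper's.

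The paper approximates only $\sigma$ by a sequence of Lipschitz maps $\sigma^n$ (leaving $D_xb$ untouched, hence still of polynomial growth), solves the approximating BSDE via the monotone--driver result of Briand et al., and then obtains a \emph{uniform} bound on $(p^n,q^n)$ through a \emph{duality} argument: it pairs $(p^n,q^n)$ with the solution $y^n$ of a perturbed linearized forward equation, which transfers the $L^2$-estimate on $y^n$ into an $L^2$-bound on $(p^n,q^n)$. From this uniform bound only a \emph{weakly} convergent subsequence is extracted, and one checks that the weak limit solves the original equation term by term.

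Your argument truncates instead via the stopping time $\tau_N$, which makes all three coefficients bounded simultaneously; this lets you invoke the classical Pardoux--Peng theorem directly, without appealing to Briand et al. The key observation that $p^N=q^N=0$ on $(\tau_N,T]$ then kills the dangerous cross terms $(A^N-A^M)^Tp^M$ and $(B^N-B^M)^Tq^M$ in the difference equation, so the monotonicity-based a priori estimate yields a genuine \emph{Cauchy} sequence and hence \emph{strong} $L^2$-convergence. This makes the passage to the limit in the integral equation completely routine (dominated convergence plus $\|A^N\|\le\|A\|$ pointwise), whereas the paper has to argue weak convergence of each term separately. One small caveat: when $c_{1/2}<0$, your truncated coefficients satisfy joint monotonicity with constant $\max(c_{1/2},0)$ rather than $c_{1/2}$ (they vanish past $\tau_N$), so strictly you need $r>2\max(c_{1/2},0)$; on the finite horizon $[0,T]$ this is harmless since the weight $e^{-rt}$ is equivalent to $1$ and you may replace $r$ by any auxiliary $\rho$ in the It\^o computation.

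In short, your approach is more elementary and self-contained, while the paper's duality argument, though heavier, foreshadows the role of the linearized state equation that reappears later in the treatment of the second adjoint.
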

\begin{proof}
The proof consists in three steps. First, the diffusion term $\sigma$ is approximated so that there exists a solution for each approximating backward equation, by classical results. Second, a duality between these approximate solutions and a properly perturbed first variation equation is established. The last step consists in constructing a unique solution to the original equation (on a finite horizon) by some compactness arguments. \smallskip

\noindent Let us define a sequence of Lipschitz-continuous maps $\sigma^n$ with $\sigma^n(x) \to \sigma(x)$ as $n \to \infty$, for all $x \in \R^n$ so that the joint monotonicity property still holds. An example of such approximation can be given by (see \cite{cerrai2001second})
\begin{equation*}
\sigma^n(x) =
\begin{system}
\sigma(x), \qquad \qquad \qquad \;\text{ if } \abs{x} \le n, \\
\sigma \lp \frac{(n+1)x}{\abs{x}}\rp, \qquad \quad \text{ if } \abs{x} \ge n +1.
\end{system}
\end{equation*}

\noindent Then, for each $n$, the following approximating equation
\begin{equation}\label{eq:first.adjoint.finite.horizon.approx}
\begin{system}
dp^n_t = - \Big[ D_xb(\bar{X}_t,\bar{u}_t)^T p^n_t  + D_x\sigma^n(\bar{X}_t,\bar{u}_t)^T q^n_t -D_xf(\bar{X}_t,\bar{u}_t) - rp^n_t \Big]dt + q^n_t dW_t,\\
p^n_T = 0,
\end{system}
\end{equation}
admits a unique solution $ \lp p^n_t, q^n_t\rp \in L_{\mathcal{F}}^2([0,T];\R^n) \times L_{\mathcal{F}}^2([0,T];\R^{n\times d})$ thanks to Briand et al. \cite{Briand2003109}, Theorem 4.1. \bigskip

\noindent Now, to establish the duality, consider for all $n \in \mathbb{N}$ and for all $\gamma(\cdot) \in L_{\mathcal{F}}^2([0,T];\R^n)$ and
$\eta(\cdot) \in L_{\mathcal{F}}^2([0,T];\R^{n\times d})$ the following perturbed first variation equation

\begin{equation}\label{eq:first variation for first adjoint}
\begin{system}
dy^n_t = \left( D_xb(\bar{X}_t,\bar{u}_t)y^n_t  - ry^n_t + \gamma_t \right)dt  + \left( D_x\sigma^n(\bar{X}_t,\bar{u}_t)y^n_t + \eta_t \right) dW_t,\ t \in(0,T], \\
y^n_0 = 0.
\end{system}
\end{equation}

\noindent Due to Theorem \ref{th:Variation equations} we know that the above equation has a unique solution in $L_{\mathcal{F}}^2([0,T];\R^n)$ for each $n$.
Moreover, using dissipativity it is easy to show that there exists $K >0$ not depending on $n$ such that

\begin{equation}\label{eq:estimate.y^n}
\E \int_0^T \abs{y^n_t}^2 dt \leq K\left[ \E\int_0^T \abs{\gamma_t}^2 dt + \E\int_0^T \norm{\eta_t}^2 dt\right].
\end{equation}

\noindent Next, by applying the It\^o formula to $d\braket{y^n_t,p^n_t}$ we establish the duality relation
\begin{equation}\label{duality}
\E \int_0^T \braket{p_t^n,\gamma_t} dt + \E\int_0^T Tr\left\{ q^n_t \lp\eta_t \rp^T \right\} dt
=  - \E \int_0^T \braket{D_xf(\bar{X}_t,\bar{u}_t), y^n_t} dt.
\end{equation}
Let us define the set $\mathcal{A}:= \lbrace \gamma(\cdot) \in L_{\mathcal{F}}^2([0,T];\R^n): \norm{\gamma}_{L_{\mathcal{F}}^2([0,T];\R^n)} \leq 1 \rbrace$.
If we take $\eta \equiv 0$ in \eqref{duality} we get
\begin{equation}
\begin{split}
\left( \E \int_0^T \abs{p_t^n}^2dt \right)^{1/2} &\leq \sup_{\gamma \in \mathcal{A}}\left[ \E \int_0^T \big|\braket{D_xf(\bar{X}_t,\bar{u}_t), y^n_t} \big| dt \right] \\
&\leq \sup_{\gamma \in \mathcal{A}}\left[ \left( \E \int_0^T \abs{D_xf(\bar{X}_t,\bar{u}_t)}^2dt\right)^{1/2} \left( \E \int_0^T \abs{y^n_t}^2dt\right)^{1/2}\right] \\
&\leq C \left( \E \int_0^T \abs{D_xf(\bar{X}_t,\bar{u}_t)}^2dt\right)^{1/2}.
\end{split}
\end{equation}
If we repeat the same argument with $\gamma \equiv 0$, instead of $\eta$, we finally get
\begin{equation*}
\norm{p^n_t}^2_{L^{2}_{\mathcal{F}}([0,T]; \R^n)} + \norm{q^n_t}^2_{L^{2}_{\mathcal{F}}([0,T]; \R^{n\times d})} \leq C \E \int_0^T \abs{D_xf(\bar{X}_t,\bar{u}_t)}^2 dt.
\end{equation*}

\noindent This way we have obtained a uniform estimate (with respect to $n$) of the $L^2_{\mathcal{F}}-$norm of $(p^n,q^n)$. Hence there exists a subsequence, denoted by abuse of notation again as $(p^n,q^n)$, which converges weakly in $L^2_{\mathcal{F}}([0,T];\R^n) \times L^2_{\mathcal{F}}([0,T];\R^{n\times d})$ to a couple $(p,q)$. Our goal is to verify that $(p,q)$ is the solution to the limit equation

\begin{equation*}\label{eq:first.adjoint.finite.horizon 2}
\begin{system}
dp_t = - \Big[ D_xb(\bar{X}_t,\bar{u}_t)^T p_t  + D_x\sigma(\bar{X}_t,\bar{u}_t)^T q_t -D_xf(\bar{X}_t,\bar{u}_t) - rp_t \Big]dt + q_t dW_t,\\
p_T = 0.
\end{system}
\end{equation*}

\noindent To do so, we note that due to the linearity of the equation, it is enough to prove that each term of the approximating equation weakly converges to the corresponding term in the limit equation. Let us start by studying the term

\begin{equation*}
\begin{split}
\braket{D_x\sigma^n(\bar{X}_t,\bar{u}_t)^Tq^n_t,v} &=  \braket{q^n_t,D_x\sigma^n(\bar{X}_t,\bar{u}_t)v} \\
&= \braket{q^n_t,D_x\sigma(\bar{X}_t,\bar{u}_t)v} + \braket{q^n_t,D_x\sigma^n(\bar{X}_t,\bar{u}_t)v - D_x\sigma^n(\bar{X}_t,\bar{u}_t)v}.\\
\end{split}
\end{equation*}

\noindent As $n \to \infty$, the right hand side converges to
\begin{equation*}
\braket{q_t,D_x\sigma(\bar{X}_t,\bar{u}_t)v} = \braket{D_x\sigma(\bar{X}_t,\bar{u}_t)^Tq_t,v},
\end{equation*}
thanks to the pointwise convergence of the derivative of $\sigma^n$. Indeed, $D_x\sigma^n(x) = D_x\sigma(x)$, if $\abs{x} \le n$, and the derivative is bounded.
Regarding the noise term, let us notice that the map $q \to \int_0^T q_t dW_t$ is linear and continuous, hence weakly continuous. The other terms are easy to treat. \bigskip

\noindent For the uniqueness part it is enough to use a version of Lemma \ref{l:apriori.backward} on finite time horizon. Then we have existence and uniqueness of a solution in finite time horizon and the proof is finished.
\end{proof}

\begin{remark}
\normalfont

\begin{itemize}
\item[]
\item[(a)] The introduction of the term $-r y^n_t$ in (\ref{eq:first variation for first adjoint}) is due to the choice of the scalar product used for
establishing duality. If one considers a scalar product in $L^{2,-r}$ rather than in $L^2$ then the additional term $-r y^n_t$ can be omitted. \smallskip

\item[(b)] An alternative approach to obtain the uniform estimate can be the one following Pardoux \cite{pardoux1999bsdes}. Indeed, applying the It\^o formula gives
\begin{align*}
\E|p^n_t|^2 &= 2 \E \int^T_t \lbr \braket{p^n_s,D_x b(\bar{X}_s,\bar{u}_s)p^n_s}
    + \braket{p^n_s,D_x\sigma^n(\bar{X}_s,\bar{u}_t)^Tq^n_s} + \braket{p^n_s,D_x f(\bar{X}_s,\bar{u}_s)} \rbr ds \nonumber \\
& \quad - 2r\E \int^T_t |p^n_s|^2 ds -  \E \int^T_t \norm{q^n_s}^2 ds,
\end{align*}
which, thanks to the joint monotonicity of $b,\sigma^n$ and weighted Young inequality, produces
\begin{align*}\label{eq:estimate Pardoux-type2}
\E|p^n_t|^2 + &\lp 2r -\varepsilon - 2c_1 \rp \E \int^T_t |p^n_s|^2 ds + \lp 1- \frac{1}{2} \rp \E \int^T_t \norm{q^n_s}^2 ds \nonumber \\
& \leq \frac{1}{\varepsilon}  \E \int^T_t |D_x f(\bar{X}_s,\bar{u}_s)|^2 ds,
\end{align*}
for all $t \in [0,T]$, $\varepsilon >0$ and $r > c_1$. Again, we have a uniform estimate (in $n$) for the left hand side and the relative compactness argument can be applied as before. Note that this approach gives another restriction on $r$ than the one used in the proof. \smallskip

\item[(c)] All the results of this section can be made more general when considering general weighted Young inequality
$ab \leq \frac{p}{2}a^2 + \frac{1}{2p}b^2, \ p>0$ in (\ref{eq:estimate.Dsigmaq}) rather than the usual Young inequality with $p=1$.

\end{itemize}
\end{remark}

\noindent Now we are ready for the following
\begin{theorem}\label{t:adjoint:eq}
Under Hypotheses $(H1)-(H5)$, there exists $r >0$ such that equation \eqref{eq.adjoint_first} admits a unique solution $(p,q)$ which belongs to $L^{2,-r}_{\mathcal{F}}(\R_+; \R^n) \times L^{2,-r}_{\mathcal{F}}(\R_+; \R^{n\times d})$.
\end{theorem}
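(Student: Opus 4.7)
The plan is to build the infinite-horizon solution as a limit of the finite-horizon solutions produced in Theorem \ref{thm:first adjoint approx}. For each $T>0$, let $(p^T,q^T)$ denote the unique solution of \eqref{eq:first.adjoint.finite.horizon} on $[0,T]$ with terminal condition $p^T_T = 0$, and extend both processes by zero on $(T,+\infty)$ so that they live naturally in $L^2_{\Fcal}(\R_+;\R^n)\times L^2_{\Fcal}(\R_+;\R^{n\times d})$. The first step is to choose $r>0$ large enough that simultaneously $r>2c_{1/2}$ \emph{and} the quantity
\[
M := \E\int_0^{\infty} e^{-rt}\abs{D_xf(\bar X_t,\bar u_t)}^2\,dt
\]
is finite; this is possible because $D_xf$ has polynomial growth in $x$ by (H5) and $\bar X$ has exponentially weighted moments by Theorem \ref{th:1.1}, so taking $r$ above the appropriate multiple of $c_{l}$ (with $l$ as in (H5)) suffices.

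The second step is a uniform a priori estimate. Applying It\^o's formula to $e^{-rs}\abs{p^T_s}^2$ on $[t,T]$, using the joint monotonicity bound \eqref{eq:joint dissipativity} together with the key estimate \eqref{eq:estimate.Dsigmaq} to absorb the $D_x\sigma^T q^T$ term, and then a weighted Young inequality on the $D_xf$ term, one obtains
\begin{equation*}
e^{-rt}\E\abs{p^T_t}^2 + (r-2c_{1/2}-\delta)\E\int_t^T e^{-rs}\abs{p^T_s}^2 ds + \tfrac{1}{2}\E\int_t^T e^{-rs}\norm{q^T_s}^2 ds \le \tfrac{1}{\delta}\E\int_t^{T} e^{-rs}\abs{D_xf(\bar X_s,\bar u_s)}^2 ds,
\end{equation*}
for $\delta>0$ sufficiently small. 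Taking $t=0$ yields a bound on $(p^T,q^T)$ in $L^{2,-r}_{\Fcal}(\R_+)$ uniformly in $T$, while for any fixed $t$ we get $e^{-rt}\E\abs{p^T_t}^2\le M/\delta$ uniformly in $T\ge t$, and moreover the tail $\E\int_t^{\infty} e^{-rs}\abs{D_xf(\bar X_s,\bar u_s)}^2 ds$ tends to $0$ as $t\to\infty$.

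The third step is to check that $\{(p^T,q^T)\}_{T>0}$ is Cauchy in $L^{2,-r}_{\Fcal}(\R_+;\R^n)\times L^{2,-r}_{\Fcal}(\R_+;\R^{n\times d})$. For $T_1<T_2$, the process $p^{T_2}-p^{T_1}$ solves on $[0,T_1]$ the homogeneous version of \eqref{eq:first.adjoint.finite.horizon} with terminal value $p^{T_2}_{T_1}$ at time $T_1$; repeating the It\^o computation above for $e^{-rs}\abs{p^{T_2}_s-p^{T_1}_s}^2$ on $[0,T_1]$ gives
\begin{equation*}
(r-2c_{1/2}-\delta)\E\int_0^{T_1}e^{-rt}\abs{p^{T_2}_t-p^{T_1}_t}^2 dt + \tfrac{1}{2}\E\int_0^{T_1}e^{-rt}\norm{q^{T_2}_t-q^{T_1}_t}^2 dt \le e^{-rT_1}\E\abs{p^{T_2}_{T_1}}^2,
\end{equation*}
which goes to $0$ uniformly in $T_2\ge T_1$ by the pointwise bound from step two. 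The contribution on $[T_1,T_2]$, where $p^{T_1}\equiv 0$, is controlled by the tail of $M$ via the a priori estimate applied to $(p^{T_2},q^{T_2})$ starting from $T_1$. Hence $(p^T,q^T)$ converges to some $(p,q)\in L^{2,-r}_{\Fcal}(\R_+;\R^n)\times L^{2,-r}_{\Fcal}(\R_+;\R^{n\times d})$.

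The final step is to pass to the limit in the integral form of \eqref{eq:first.adjoint.finite.horizon} on every bounded interval: since the coefficients $D_xb(\bar X,\bar u)^T$ and $D_x\sigma(\bar X,\bar u)^T$ only generate bounded linear operators acting on the corresponding $L^2$ spaces (locally in time), weak/strong convergence transfers to the drift and diffusion terms, and the terminal contribution $p^T_T=0$ is absorbed in the weight for $T\to\infty$. This shows that $(p,q)$ solves \eqref{eq.adjoint_first}. Uniqueness is an immediate consequence of Lemma \ref{l:apriori.backward} applied with $f^1=f^2=f$. The main technical obstacle I foresee is the combined constraint on $r$: it must beat both the monotonicity constant $c_{1/2}$ coming from the BSDE estimate and the polynomial growth of $D_xf(\bar X_\cdot,\bar u_\cdot)$ to ensure $M<\infty$, and care is needed to make these compatible with the restrictions on $r$ already imposed in the state and variation equations.
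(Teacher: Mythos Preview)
Your proposal is correct and follows essentially the same route as the paper: approximate by finite-horizon solutions with zero terminal condition (the paper indexes by integers $k$ and phrases it as truncating the forcing term $\varphi^k_t:=D_xf(\bar X_t,\bar u_t)\chi_{[0,k]}(t)$, but this is the same construction), show the family is Cauchy in $L^{2,-r}_{\Fcal}$ via the a priori estimate, and pass to the limit, with uniqueness coming from Lemma~\ref{l:apriori.backward}. The only cosmetic difference is that the paper obtains the Cauchy property in one line by observing that the extended-by-zero process $(p^k,q^k)$ is an \emph{infinite-horizon} solution with forcing $\varphi^k$, so Lemma~\ref{l:apriori.backward} applied to $f^1=\varphi^{k_1}$, $f^2=\varphi^{k_2}$ bounds the difference directly by $\tfrac{1}{\delta}\E\int_{k_1\wedge k_2}^{\infty}e^{-rs}\abs{D_xf(\bar X_s,\bar u_s)}^2 ds$, whereas you split $[0,T_1]\cup[T_1,T_2]$ and re-derive the same estimate by hand; both are fine.
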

\begin{proof}
Following Peng and Shi \cite{peng2000infinite}, Theorem 4, define for all $k \in \mathbb{N}$
\[\varphi_t^k := D_xf(\bar{X}_t,\bar{u}_t)\chi_{[0,k]}(t), \qquad t \in \R_+,\]
which converges to $D_xf(\bar{X}_t,\bar{u}_t)$ as $k \to \infty$. We define the solution to the following approximate equation on infinite time
horizon
\begin{equation}
dp^k_t = - \Big[ D_xb(\bar{X}_t,\bar{u}_t)^T p^k_t  + D_x\sigma(\bar{X}_t,\bar{u}_t)^T q^k_t -\varphi_t^k - rp^k_t \Big]dt + q^k_t dW_t,
\end{equation}
as a process solving the following BSDE on a finite time horizon
\begin{equation}
\begin{system}
dp^k_t = - \Big[ D_xb(\bar{X}_t,\bar{u}_t)^T p^k_t  + D_x\sigma(\bar{X}_t,\bar{u}_t)^T q^k_t -\varphi_t^k - rp^k_t \Big]dt + q^k_t dW_t, \\
p_k^k = 0,
\end{system}
\end{equation}
and which is identically zero for $t \in (k,\infty)$. Such solution exists for each $k$ due to Theorem \ref{thm:first adjoint approx}. Using the a priori estimate given in Lemma \ref{l:apriori.backward}, it is easy to see that there exists $r$ such that the sequence of solutions $(p^k_t,q^k_t)$ forms a Cauchy sequence in $L^{2,-r}_{\mathcal{F}}(\R_+; \R^n) \times L^{2,-r}_{\mathcal{F}}(\R_+; \R^{n\times d})$ and that the limiting processes $(p_t,q_t)$ solves \eqref{eq.adjoint_first}. Uniqueness is straightforward due to Lemma \ref{l:apriori.backward}.
\end{proof}

\section{Second adjoint}
The second adjoint equation has the following form

\begin{equation}\label{eq:second:adjoint}
\begin{split}
- dP(t) &= \Big[ D_xb(\bar{X}_t,\bar{u}_t)^T P_t  + P_tD_xb(\bar{X}_t,\bar{u}_t) \\
& \qquad + \sum_{j=1}^d D_x\sigma^j(\bar{X}_t,\bar{u}_t)^T P_tD_x\sigma^j(\bar{X}_t,\bar{u}_t) \\
& \qquad + \sum_{j=1}^d \left( D_x\sigma^j(\bar{X}_t,\bar{u}_t)^T Q^j_t+ Q^j_tD_x\sigma^j(\bar{X}_t,\bar{u}_t) \right) \\
& \qquad  \Big. + D_x^2 H(\bar{X}_t,\bar{u}_t,p_t,q_t) - r_tP_t \Big] dt  -  \sum_{j=1}^d Q_t^j dW^j_t.
\end{split}
\end{equation}

\noindent For a detailed discussion of the role of this equation see e.g. \cite{Yo99}.
We can see that the term $\sum_{j=1}^d D_x\sigma^j(\bar{X}_t,\bar{u}_t)^T P_tD_x\sigma^j(\bar{X}_t,\bar{u}_t)$ destroys the dissipative behaviour of the dynamics in the sense that, in general
\begin{equation}\label{eq:dissipativity P}
\Big< \sum_{j=1}^d D_x\sigma^j(\bar{X}_t,\bar{u}_t)^T P_tD_x\sigma^j(\bar{X}_t,\bar{u}_t),P_t \Big>_2 \nleq c_p \norm{P_t}^2_2.
\end{equation}

\noindent Nontheless, see Remark \ref{rm:sigma lipschitz} for one particular case. The lack of dissipativity prevents us from finding an a priori estimate of the solution. Hence the argument we adopted to solve the first adjoint is no longer helpful. The only information that can be useful to study the process $P_t$ comes from the first variation equation (\ref{eq:firstVariation}). It can be shown that $P_t$ is dual (in some sense explained later) to the process
$Y^{\varepsilon}_t$ defined as $Y^{\varepsilon}_t = y^{\varepsilon}_t(y^{\varepsilon}_t)^T$. It is not difficult to verify that $Y^{\varepsilon}_t$ is a symmetric and positive (semi)definite matrix process. By using It\^o formula it can be also shown that it is a solution to the following (matrix-valued) SDE
\begin{equation}\label{eq:Y.general}
\begin{split}
dY^{\varepsilon}_t &= \Bigl[ D_xb(\bar{X}_t,\bar{u}_t)Y^{\varepsilon}_t + Y^{\varepsilon}_tD_xb(\bar{X}_t,\bar{u}_t)^T \Bigr. \\
& \qquad + \sum_{j=1}^d D_x\sigma^j(\bar{X}_t,\bar{u}_t) Y^{\varepsilon}_t D_x\sigma^j(\bar{X}_t,\bar{u}_t)^T + \Gamma(t)\Bigr] dt \\
& \qquad + \sum_{j=1}^d \Bigl[ D_x\sigma^j(\bar{X}_t,\bar{u}_t) Y^{\varepsilon}_t +
Y^{\varepsilon}_tD_x\sigma^j(\bar{X}_t,\bar{u}_t)^T  + \Lambda^j(t) \Bigr]dW^j_t, \\
Y^{\varepsilon}_0 &= 0,
\end{split}
\end{equation}

\noindent where
\begin{equation}\label{eq:Gamma}
\begin{split}
\Gamma(t) &:= \sum_{j=1}^d \delta \sigma^j(\bar{X}_t,\bar{u}_t) \left(\delta \sigma^j(\bar{X}_t,\bar{u}_t) \right)^T  + \sum_{j=1}^dD_x\sigma^j(\bar{X}_t,\bar{u}_t) y^{\varepsilon}_t \left(\delta\sigma^j(\bar{X}_t,\bar{u}_t)\right)^T \\
&\qquad + \sum_{j=1}^d \delta\sigma^j(\bar{X}_t,\bar{u}_t) (y^{\varepsilon}_t)^T D_x\sigma^j(\bar{X}_t,\bar{u}_t)^T,
\end{split}
\end{equation}
\noindent and
\begin{equation}\label{eq:Lambda}
\begin{split}
\Lambda(t) &= \sum_{j=1}^d \Lambda^j(t) := \sum_{j=1}^d \Bigl[ \delta\sigma^j(\bar{X}_t,\bar{u}_t) (y^{\varepsilon}_t)^T + y^{\varepsilon}_t \left(\delta \sigma^j(\bar{X}_t,\bar{u}_t) \right)^T \Bigr].
\end{split}
\end{equation}

\noindent We also have the following

\begin{proposition}\label{p:estimate_Y}
Under Hypotheses (H1)-(H5), there exists $r \in \R$ such that equation \eqref{eq:Y.general} has a unique solution $Y^{\varepsilon} \in L_{\mathcal{F}}^{2,-r}(\R_+; \R^{n\times n})$ and the following holds

\begin{equation}\label{eq:estimate_Y}
\E \int_0^{\infty}e^{-rt}\norm{Y^{\varepsilon}_t}_2^2 dt \leq K \E \int_0^{\infty}e^{-rt}\norm{\Gamma_t}_2^2 dt + K\sum_{j=1}^d \E \int_0^{\infty}e^{-rt}\norm{\Lambda^j_t}_2^2 dt, \end{equation}
for some $K >0$.
\end{proposition}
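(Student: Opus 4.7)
The strategy is to view \eqref{eq:Y.general} as a linear SDE on the Hilbert space $\R^{n\times n}$ endowed with the Hilbert--Schmidt inner product $\braket{M,N}_2 := \opn{Tr}(M^T N)$, where the forcings $\Gamma$ and $\Lambda^j$ are independent of the unknown. Existence of a solution in $L^{2,-r}_{\Fcal}(\R_+;\R^{n\times n})$ is essentially free: the process $Y^{\varepsilon}_t := y^{\varepsilon}_t (y^{\varepsilon}_t)^T$ already solves \eqref{eq:Y.general}, as noted in the derivation preceding the proposition (this is just the It\^o product rule applied to $y^{\varepsilon}_t$ and $(y^{\varepsilon}_t)^T$). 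Since $\norm{Y^{\varepsilon}_t}_2^2 = |y^{\varepsilon}_t|^4$, the required weighted integrability is inherited from Proposition \ref{p:expansion}(ii) with $k=2$, provided $r$ is chosen large enough.

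The heart of the matter is therefore the a priori estimate \eqref{eq:estimate_Y}, which also yields uniqueness: applied to the difference of any two solutions, which solves \eqref{eq:Y.general} with $\Gamma = \Lambda^j \equiv 0$, it forces the difference to vanish in the weighted space. The plan is to mimic Lemma \ref{l:4.1} in matrix form: apply It\^o's formula to $e^{-rt}\norm{Y_t}_2^2$ for a generic solution $Y$ of \eqref{eq:Y.general}, and use the weighted Young inequality to peel the cross terms against $\Gamma$ and $\Lambda^j$ off of $Y$. Denoting
\[
\mathcal{A}(Y) := D_xb\, Y + Y\, D_xb^T + \sum_{j=1}^d D_x\sigma^j\, Y\, (D_x\sigma^j)^T,\qquad \mathcal{B}^j(Y) := D_x\sigma^j\, Y + Y\, (D_x\sigma^j)^T,
\]
with $D_xb$ and $D_x\sigma^j$ evaluated along $(\bar X_t,\bar u_t)$, one arrives at an inequality of the schematic form
\begin{equation*}
\begin{split}
\E\bigl[ e^{-rt}\norm{Y_t}_2^2\bigr] &+ \E\int_0^t e^{-rs}\Bigl[ (r-\delta)\norm{Y_s}_2^2 - 2\braket{\mathcal{A}(Y_s), Y_s}_2 - (1+\varepsilon)\sum_{j=1}^d \norm{\mathcal{B}^j(Y_s)}_2^2 \Bigr] ds \\
&\le C_{\delta,\varepsilon}\, \E\int_0^t e^{-rs}\Bigl(\norm{\Gamma_s}_2^2 + \sum_{j=1}^d \norm{\Lambda^j_s}_2^2\Bigr) ds.
\end{split}
\end{equation*}
It then remains to make the bracket nonnegative for $r$ large enough, which reduces to a matrix-valued joint monotonicity estimate
\begin{equation*}
\braket{\mathcal{A}(Y), Y}_2 + \tfrac{1+\varepsilon}{2}\sum_{j=1}^d \norm{\mathcal{B}^j(Y)}_2^2 \le \tilde c\, \norm{Y}_2^2,\qquad Y \in \mathcal{S}^n,
\end{equation*}
playing in the matrix-valued setting the role that \eqref{eq:joint dissipativity} plays in the vector case; granted it, one takes $r \ge 2\tilde c + \delta$ and sends $t\to\infty$ to obtain \eqref{eq:estimate_Y}.

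The main analytical obstacle is precisely this matrix-valued joint monotonicity. The dynamics of \eqref{eq:Y.general} preserves $\mathcal{S}^n$ (the forcings $\Gamma$, $\Lambda^j$ are manifestly symmetric, and a direct check shows that $\mathcal{A}$ and $\mathcal{B}^j$ map $\mathcal{S}^n$ into itself), so one may restrict to symmetric $Y$ and exploit a spectral decomposition $Y = \sum_i \lambda_i v_i v_i^T$ with orthonormal $v_i$. Expanding $\braket{\mathcal{A}(Y),Y}_2$ and $\norm{\mathcal{B}^j(Y)}_2^2$ in this basis produces only scalar quantities of the form $\braket{D_xb\, v_i,v_i}$, $|D_x\sigma^j v_i|^2$ and $\braket{D_x\sigma^j v_i, v_k}$, which can be controlled by (H4) applied on each $v_i$ (with a suitable choice of the joint monotonicity parameter $p$), combined with the elementary Parseval identity $\sum_k \braket{D_x\sigma^j v_i, v_k}^2 = |D_x\sigma^j v_i|^2$. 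The polynomial growth in (H2)--(H3) guarantees that the resulting constant $\tilde c$ is finite and independent of $(\bar X_t,\bar u_t)$. Once the monotonicity is in hand, the argument collapses into a routine It\^o--Young--Gronwall computation and yields \eqref{eq:estimate_Y} for any $r$ exceeding the explicit threshold dictated by $\tilde c$ and the Young weights.
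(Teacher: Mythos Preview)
Your proposal is correct and follows essentially the same route as the paper: existence via $Y^{\varepsilon}_t = y^{\varepsilon}_t (y^{\varepsilon}_t)^T$, It\^o's formula applied to $e^{-rt}\norm{Y_t}_2^2$, and a matrix-valued joint monotonicity obtained by diagonalizing $Y$ and invoking (H4) eigenvector by eigenvector together with Parseval. The only minor difference is that the paper leans on the rank-one positive semidefinite structure $Y^{\varepsilon}_t = y^{\varepsilon}_t (y^{\varepsilon}_t)^T$ both for existence/uniqueness (inherited from $y^{\varepsilon}$) and in its spectral computations, whereas you work with general symmetric $Y$ and deduce uniqueness from the a priori estimate; both are valid and lead to the same conclusion.
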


\begin{proof}
See Appendix.
\end{proof}

The crucial point here is that Proposition \ref{p:estimate_Y} holds true if and only if $Y^{\varepsilon}_t$ is the solution to equation \eqref{eq:Y.general}, i.e. for $\Gamma$ and $\Lambda$ given by (\ref{eq:Gamma}) and (\ref{eq:Lambda}). For general (nonsymmetric) forcing terms $\Gamma$ and $\Lambda^j \in L_{\mathcal{F}}^{2,-r}(\R_+;\R^{n\times n})$ the corresponding process $Y_t$ can not be decomposed anymore as $y_ty_t^T$ for some process $y_t$. Due to this fact, it is not possible to apply a classical duality argument (as in \cite{tessitore1996existence} or \cite{Yo99}) to extract some information for $P$ and the corresponding BSDE.

\begin{remark}\label{rm:sigma lipschitz}
\normalfont
Note that in the case of $\sigma$ Lipschitz (thus $D_x \sigma$ bounded) it is quite easy to derive the dissipativity of $P$ in sense of
(\ref{eq:dissipativity P}). This particular case can be treated in the same way as in the section on first adjoint equation.
\end{remark}

\subsection{Construction of $P_t$}
Here we propose a different way to construct the process $P$, following ideas of Fuhrman et al. \cite{fuhrman2013stochastic}. More precisely, will show that there exists $r>0$ and a well defined matrix-valued process $P$ such that the following duality relation holds
\begin{equation}\label{eq:final.relation}
\begin{split}
\E\int_0^\infty &e^{-rt} \opn{Tr} \Big[ D_x^2 H(\bar{X}_t,\bar{u}_t,p_t,q_t)Y^{\varepsilon}_t\Big] dt \\
&= \sum_{j=1}^d \E\int_0^\infty e^{-rt} \Big<P_t \delta \sigma^j(\bar{X}_t,\bar{u}_t),\delta \sigma^j(\bar{X}_t,\bar{u}_t)\Big>dt  + o(\varepsilon).
\end{split}
\end{equation}
Once we have this relation, it is easy to prove the stochastic maximum principle using usual arguments. The strategy to do so will be the following. \\
\newline
{\bf{Dual identity satisfied by $P$:} }
For $t \ge 0$ and an arbitrary vector $\gamma \in \R^n$, let us consider the following SDE

\begin{equation}\label{eq:firstVariation duality for second adjoint}
\begin{system}
dy^{t,\gamma}_s = D_xb(\bar{X}_s,\bar{u}_s)y^{t,\gamma}_s ds + \sum_{j=1}^{d} D_x\sigma^j(\bar{X}_s,\bar{u}_s)y^{t,\gamma}_s dW^j_s,\ s \geq t,\\
\;\; y^{t,\gamma}_t = \gamma.
\end{system}
\end{equation}
By repeating the arguments by Yong and Zhou \cite{Yo99}, Chapter 3, the SDE for the product
$y^{t,\eta}_s\lp y^{t,\gamma}_s \rp ^T$ is of the form (with the notation $A_t \equiv D_xb(\bar{X}_t,\bar{u}_t)$ and
$B^j_t \equiv D_x\sigma^j(\bar{X}_t,\bar{u}_t)$)

\begin{align}\label{eq:duality for second adjoint ito for Y}
d \lp y^{t,\eta}_s\lp y^{t,\gamma}_s \rp ^T \rp &= \left[ A_s y^{t,\eta}_s\lp y^{t,\gamma}_s \rp ^T  + y^{t,\eta}_s\lp y^{t,\gamma}_s \rp ^T A^T_s
+ \sum_{j=1}^{d} B^j_s  y^{t,\eta}_s\lp y^{t,\gamma}_s \rp ^T  \lp B^j_s \rp^T \right]  ds \nonumber \\
& \qquad + \left[  \sum_{j=1}^{d} B^j_s  y^{t,\eta}_s \lp y^{t,\gamma}_s \rp ^T  +  y^{t,\eta}_s\lp y^{t,\gamma}_s \rp ^T \lp B^j_s \rp^T  \right] dW^j_s.
\end{align}

\noindent Suppose for a moment that we are able to find a solution to equation \eqref{eq:second:adjoint} in
$L^{2,-r}_{\mathcal{F}}\lp \R_+; \mathcal{S}^n\rp \times  (L^{2,-r}_{\mathcal{F}}\lp \R_+; \mathcal{S}^n\rp)^d$ for some $r>0$.
Noting that $\braket{P_s y^{t,\eta}_s ,y^{t,\gamma}_s} = \opn{Tr} \lbrace P_s y^{t,\eta}_s(y^{t,\gamma}_s)^T \rbrace$ and using equation \eqref{eq:duality for second adjoint ito for Y}, it follows by the It\^o formula that for all $[t,T]$ we have $\mP$-almost surely
\begin{equation}
\begin{split}
e^{-rt} \braket{P_t \eta , \gamma} &= \E^{\Fcal_t} \lbr e^{-rt} \braket{P_t y^{t,\eta}_t ,y^{t,\gamma}_t} \rbr \\
&= \E^{\Fcal_t} \lbr e^{-rT} \braket{P_T y^{t,\eta}_T ,y^{t,\gamma}_T} \rbr + \E^{\Fcal_t} \int_t^T e^{-rs} \braket{D_x^2 H(s) y^{t,\eta}_s ,y^{t,\gamma}_s} ds,
\end{split}
\end{equation}
where we have used the notation $D_x^2 H(t):= D_x^2 H(\bar{X}_t,\bar{u}_t,p_t,q_t)$ for the forcing term in the equation for $P$. Since the processes $P(\cdot)$ and $y^{t,\eta}_{\cdot} (y^{t,\gamma}_{\cdot})^T$ are assumed to be in
some appropriate exponentially-weighted spaces, there has to be a sequence of times $(T_n)_{n\geq 1}$ with $T_n \nearrow +\infty$ as $n \rightarrow +\infty$ such that $\mP$-almost surely
\begin{equation}\label{eq:duality for second adjoint vanishing term}
\lim_{n \rightarrow +\infty}\E^{\Fcal_t} \lbr e^{-rT_n} \braket{P_{T_n} y^{t,\eta}_{T_n} ,y^{t,\gamma}_{T_n}} \rbr = 0.
\end{equation}

\noindent Passing to the limit along the above sequence $(T_n)_{n\geq 1}$ produces the following formal relation
\begin{equation}\label{eq:duality for second adjoint duality}
\braket{P_t \eta , \gamma} = \E^{\Fcal_t} \int_t^\infty e^{-r(s-t)} \braket{D_x^2 H(s) y^{t,\eta}_s ,y^{t,\gamma}_s} ds,
\end{equation}
which can be used to define the process $P_t$. Our aim is to show that the right hand side of  \eqref{eq:duality for second adjoint duality} actually defines a continuous bilinear form that can be used to prove \eqref{eq:final.relation} without any reference to the second adjoint BSDE. \bigskip

\noindent {\bf{Existence of $P$:} }
The following estimates on $(y^{t,\eta}_s)_{s\geq t}$ are crucial to prove continuity of the bilinear form.

\begin{proposition}\label{p.tecn.P1}
Let $\eta \in \R^n$ and assume that Hypotheses (H1)-(H4) hold. Then there is a unique solution
$(y^{t,\eta}_s)_{s\geq t}\in L_{\mathcal{F}}^{2,-r}(\R_+;\R^n)$ to the equation \eqref{eq:firstVariation duality for second adjoint} for some $r$.
Moreover, there exists a constant $C >0$ such that for $t \ge 0$ and $s\geq t$
\begin{equation}\label{eq:estimate_y_gamma}
\sup_{s\geq t}\E^{\mathcal{F}_t}\lbr e^{-rt}\abs{y^{t,\eta}_s}^4 \rbr \leq C\abs{\eta}^4, \qquad \mP-\text{a.s.}
\end{equation}
and for all $h>0$, $0 \le t \le t+h$ and $s\geq t+h$
\begin{equation}\label{eq:continuity:y:initial_data}
e^{-rs}\E\abs{y^{t+h,\eta}_s - y^{t,\eta}_s}^4 \leq Ch.
\end{equation}
\end{proposition}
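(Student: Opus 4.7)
The plan is to treat the three assertions in order, exploiting the fact that \eqref{eq:firstVariation duality for second adjoint} is a linear homogeneous SDE whose coefficient processes $A_s = D_x b(\bar X_s,\bar u_s)$ and $B^j_s = D_x\sigma^j(\bar X_s,\bar u_s)$ inherit the joint monotonicity from (H4) pointwise in $(\omega,s)$. First, I would obtain existence, uniqueness and the moment bound \eqref{eq:estimate_y_gamma} by applying the conditional analogue of Lemma \ref{l:4.1}, shifted to start at time $t$, with $\alpha\equiv 0$, $\beta^j\equiv 0$ and $k=2$. Indeed (H4) with $p=2k-1=3$ verifies hypothesis (1) of the lemma and, choosing $r>2c_3$, the It\^o computation performed in the proof of Lemma \ref{l:4.1}, carried out under $\E^{\mathcal{F}_t}$ instead of $\E$ (the stochastic integral is still a conditional martingale after the usual localization), yields the conditional estimate for $\sup_{s\geq t} e^{-r(s-t)}\E^{\mathcal{F}_t}|y^{t,\eta}_s|^4$, from which \eqref{eq:estimate_y_gamma} follows. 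Uniqueness is immediate by subtracting two solutions and applying the same estimate with zero initial datum.

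For the continuity estimate \eqref{eq:continuity:y:initial_data} I would use the following linearity observation: for $s\geq t+h$ the difference $\Delta_s:=y^{t+h,\eta}_s - y^{t,\eta}_s$ satisfies exactly the SDE \eqref{eq:firstVariation duality for second adjoint} on $[t+h,\infty)$ driven by the same coefficients, but starting at time $t+h$ from the $\mathcal{F}_{t+h}$-measurable random initial datum $\xi:=\eta - y^{t,\eta}_{t+h}$. Applying the first estimate conditionally on $\mathcal{F}_{t+h}$ with this random initial condition and then taking full expectations gives
\begin{equation*}
e^{-rs}\,\E|\Delta_s|^4 \;\leq\; C\,\E|\eta - y^{t,\eta}_{t+h}|^4,
\end{equation*}
after rearranging the exponential weights (the factor $e^{-r(s-t-h)}\leq 1$ is absorbed). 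Thus the matter reduces to proving the short-time estimate $\E|\eta - y^{t,\eta}_{t+h}|^4 \leq C h$.

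This last bound is obtained by writing $y^{t,\eta}_{t+h}-\eta$ as the sum of its drift and stochastic integrals on $[t,t+h]$ and applying H\"older's inequality to the drift and the Burkholder--Davis--Gundy inequality (combined with H\"older) to the diffusion, which yields
\begin{equation*}
\E|y^{t,\eta}_{t+h}-\eta|^4 \leq C h^{3}\!\int_t^{t+h}\!\E|D_xb(\bar X_u,\bar u_u)y^{t,\eta}_u|^4 du + C h\!\int_t^{t+h}\!\E\|D_x\sigma(\bar X_u,\bar u_u)y^{t,\eta}_u\|^4 du.
\end{equation*}
Invoking the polynomial growth in (H2)--(H3), the state moment bounds of Theorem \ref{th:1.1}, and a higher-order version of the first estimate (applying Lemma \ref{l:4.1} with a sufficiently large $k$ and a correspondingly large $r$ so as to control $\E|y^{t,\eta}_u|^{4q}$), one bounds the two integrands uniformly on $[t,t+h]$ by a constant depending on $|\eta|$. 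The two time factors $h^3\cdot h$ and $h\cdot h$ then give $\E|y^{t,\eta}_{t+h}-\eta|^4 \leq C h^2 \leq C h$, which closes the argument.

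The main obstacle is the careful bookkeeping of the exponential weights across the two time intervals $[t,t+h]$ and $[t+h,\infty)$, combined with the need to enlarge the discount factor $r$ enough so that the higher moments of $\bar X$ and $y^{t,\eta}$ required to tame the polynomial growth of $D_xb$ and $D_x\sigma$ are all finite. Apart from this, the argument is entirely mechanical once the linearity trick for $\Delta_s$ is in place.
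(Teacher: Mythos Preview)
Your proposal follows essentially the same route as the paper's proof: existence via Theorem~\ref{th:Variation equations}, the moment bound via (a conditional version of) Lemma~\ref{l:4.1} with $k=2$ and $r>2c_3$, and the continuity estimate by observing that $\Delta_s$ solves the same linear equation on $[t+h,\infty)$ with random initial datum $\eta - y^{t,\eta}_{t+h}$, then reducing to the short-time bound $\E|\eta - y^{t,\eta}_{t+h}|^4 \leq Ch$ via H\"older/BDG on the two integrals. The only minor deviation is that you discard the factor $e^{-r(t+h)}$ on the right-hand side after applying the first estimate, whereas the paper keeps it precisely to compensate the exponential growth of the moments of $\bar X_\tau$ (and hence of $D_xb(\bar X_\tau,\bar u_\tau)$) on $[t,t+h]$; without it the constant you obtain would depend on $t$, which you correctly flag as the ``main obstacle'' in your closing remark.
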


\begin{proof}
Let us choose $r > 2c_{1/2}$. The existence follows immediately by Theorem \ref{th:Variation equations} and the proof of \eqref{eq:estimate_y_gamma} it is a easy consequence of Lemma \ref{l:4.1} with the additional requirement
$r> 2\max\{c_{1/2}, c_3 \}$. To prove the continuity property \eqref{eq:continuity:y:initial_data} let us denote $z_s = y_s^{t+h,\eta} - y_s^{t,\eta}$ then, for $s \ge t+h$, we have by the It\^o formula
\begin{equation*}
\begin{split}
e^{-rs}\E\abs{z_s}^4 &= e^{-r(t+h)}\E\abs{\eta - y^{t,\eta}_{t+h}}^4 -r\E \int_{t+h}^s e^{-r\tau}\abs{ z_\tau}^4d\tau \\
& \quad + \E\int_{t+h}^s e^{-r\tau} \abs{z_\tau}^2\braket{D_xb(\bar{X}_{\tau},\bar{u}_{\tau})z_\tau, z_\tau}d\tau \\
& \quad + \sum_{j=1}^d \E\int_{t+h}^s e^{-r\tau} \braket{D_x\sigma^j(\bar{X}_{\tau},\bar{u}_{\tau})z_\tau,z_\tau}^2 d\tau \\
& \quad + \sum_{j=1}^d \E\int_{t+h}^s e^{-r\tau} \abs{z_\tau}^2\braket{D_x\sigma^j(\bar{X}_{\tau},\bar{u}_{\tau})z_\tau,z_\tau} d\tau. \\
\end{split}
\end{equation*}
Using the same estimate of Lemma \ref{l:4.1} we end up with
\begin{equation}
e^{-rs}\E\abs{y_s^{t+h,\eta} - y_s^{t,\eta}}^4 \leq K e^{-r(t+h)}\E\abs{\eta - y^{t,\eta}_{t+h}}^4,
\end{equation}
which we can control in the following form
\begin{equation*}
\begin{split}
\E\abs{\eta - y^{t,\eta}_{t+h}}^4 &= \E\Bigg| \int_t^{t+h}D_xb(\bar{X}_{\tau},\bar{u}_{\tau})y^{t,\eta}_\tau d\tau + \sum_{j=1}^d \int_t^{t+h}D_x\sigma^j(\bar{X}_{\tau},\bar{u}_{\tau})y^{t,\eta}_\tau dW^j_{\tau}\Bigg|^4 \\
&\leq C\E\int_t^{t+h}\abs{D_xb(\bar{X}_{\tau},\bar{u}_{\tau})y^{t,\eta}_\tau}^4 d\tau + \sum_{j=1}^d \int_t^{t+h}\abs{D_x\sigma^j(\bar{X}_{\tau},\bar{u}_{\tau})y^{t,\eta}_\tau }^4 d\tau.
\end{split}
\end{equation*}
Now, using H\"older inequality and again Lemma \ref{l:4.1} for the first term we obtain
\begin{equation}
\begin{split}
\E&\int_t^{t+h}\abs{D_xb(\bar{X}_{\tau},\bar{u}_{\tau})y^{t,\eta}_\tau}^4 d\tau  \\
& \qquad \leq  \Big(  \E \int_t^{t+h} e^{r\tau} e^{-r\tau} || D_xb(\bar{X}_{\tau},\bar{u}_{\tau})||^8 d\tau \Big)^{\frac{1}{2}}
\Big( \E \int_t^{t+h} e^{r\tau} e^{-r\tau} |y^{t,\eta}_\tau|^8 d\tau\Big)^{\frac{1}{2}} \\
& \qquad \leq h \Big( \sup_{\tau \in [t, t+h]}\left( e^{-r\tau} \E \abs{y_\tau^{t,\eta}}^8\right) \Big)^{\frac{1}{2}}
\Big( \sup_{\tau \in [t, t+h]}\left( e^{-r\tau} \E ||D_xb(\bar{X}_{\tau},\bar{u}_{\tau})|| ^8\right) \Big)^{\frac{1}{2}}  \\
& \qquad \leq Ch.
\end{split}
\end{equation}
The $D_x\sigma$ term can be treated in the same way and the conclusion follows.
\end{proof}

\begin{proposition}\label{p:def:P_t}
Let Hypotheses (H1)-(H5) hold and $\gamma$, $\eta \in \R^n$. Then there exist $r >0$ and a progressive process $(P_t)_{t\ge 0}$ with values in $\mathcal{S}^n$ such that for all $t \geq 0$ it holds
\begin{equation}\label{eq:final_duality_second_adjoint}
\braket{P_t \eta , \gamma} = \E^{\Fcal_t} \int_t^\infty e^{-r(s-t)} \braket{D_x^2 H(s) y^{t,\eta}_s ,y^{t,\gamma}_s} ds, \qquad \mP-\text{a.s.}
\end{equation}
Moreover, $\sup_{t\ge 0} \E\norm{P_t}^2 < \infty$ and for $\varepsilon \searrow 0$ we have that
\begin{equation}\label{eq:continuity:P_t}
\E\abs{\braket{(P_{t+\varepsilon}- P_t)\gamma,\eta}} \to 0.
\end{equation}
\end{proposition}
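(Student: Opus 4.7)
The plan is to construct $P_t$ directly by taking the right-hand side of \eqref{eq:final_duality_second_adjoint} as a definition, reading it as a random bilinear form on $\R^n\times\R^n$, and then verifying all the claimed properties via the moment estimates already available for $\bar X$, $(p,q)$ and $y^{t,\cdot}$.

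\textbf{Step 1 (well-posedness of the bilinear form).} Fix $t\ge 0$ and $\eta,\gamma\in\R^n$ and set
\begin{equation*}
\Lambda_t(\eta,\gamma) := \E^{\Fcal_t} \int_t^\infty e^{-r(s-t)} \braket{D_x^2 H(s)\,y^{t,\eta}_s,\,y^{t,\gamma}_s}\, ds.
\end{equation*}
By the definition of $H$ and (H2)--(H3)--(H5), $|D_x^2 H(s)|$ is controlled pointwise by
\begin{equation*}
C\bigl[(1+|\bar X_s|^{2m+1})|p_s| + (1+|\bar X_s|^{m})\|q_s\| + (1+|\bar X_s|^{l})\bigr].
\end{equation*}
Distributing the weight $e^{-r(s-t)}$ and applying H\"older's inequality in $\omega$, each factor is then integrable thanks to Theorem \ref{th:1.1} (high moments of $\bar X$), Theorem \ref{t:adjoint:eq} (moments of $(p,q)$) and Lemma \ref{l:4.1}/Proposition \ref{p.tecn.P1} (fourth-order moments of $y^{t,\eta}_s$ that grow at most exponentially in $s-t$). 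Provided $r$ is taken large enough so that all these weighted integrals are simultaneously finite, one obtains the uniform bounds
\begin{equation*}
\E|\Lambda_t(\eta,\gamma)| \le C|\eta||\gamma|, \qquad \E|\Lambda_t(\eta,\gamma)|^2 \le C|\eta|^2|\gamma|^2,
\end{equation*}
with $C$ independent of $t$.

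\textbf{Step 2 (construction of $P_t$).} The map $\eta\mapsto y^{t,\eta}_s$ is linear by \eqref{eq:firstVariation duality for second adjoint}, so $\Lambda_t$ is bilinear; symmetry of $D_x^2 H(s)$ makes it symmetric in $(\eta,\gamma)$. Outside a $\mP$-null set, there is thus a unique $P_t(\omega)\in\mathcal{S}^n$ with $\braket{P_t\eta,\gamma}=\Lambda_t(\eta,\gamma)$ for all $\eta,\gamma$. The entries $P_t^{ij}=\Lambda_t(e_i,e_j)$ are $\Fcal_t$-measurable, and joint measurability in $(t,\omega)$ follows from the continuity in $t$ proved in Step 4 below; hence $P_t$ is progressive. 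The $L^2$-bound from Step 1 applied to basis vectors gives $\sup_{t\ge 0}\E\|P_t\|^2\le C\sum_{i,j}\E|\Lambda_t(e_i,e_j)|^2<\infty$.

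\textbf{Step 3 (continuity in $t$).} For $\varepsilon>0$ write
\begin{align*}
\braket{(P_{t+\varepsilon}-P_t)\gamma,\eta}
&= \E^{\Fcal_{t+\varepsilon}}\int_{t+\varepsilon}^{\infty} e^{-r(s-t-\varepsilon)}\braket{D_x^2 H(s)\,y^{t+\varepsilon,\gamma}_s,\,y^{t+\varepsilon,\eta}_s}\,ds \\
&\quad - \E^{\Fcal_t}\int_{t}^{\infty} e^{-r(s-t)}\braket{D_x^2 H(s)\,y^{t,\gamma}_s,\,y^{t,\eta}_s}\,ds,
\end{align*}
and decompose the difference into four pieces: (a) the short-interval contribution $\int_t^{t+\varepsilon}$, of order $\varepsilon|\eta||\gamma|$ by Step 1-type bounds; (b) the factor discrepancy $e^{r\varepsilon}-1=O(\varepsilon)$ coming from the shift of the exponential weight; (c) the integrand discrepancy involving $y^{t+\varepsilon,\cdot}_s - y^{t,\cdot}_s$, controlled in $L^4$ by \eqref{eq:continuity:y:initial_data} as $O(\sqrt{\varepsilon})$, combined with moment bounds on the remaining factor via H\"older; (d) the mismatch of conditional expectations, which vanishes once one takes the outer expectation $\E$ and uses the tower property. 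Summing these contributions yields
\begin{equation*}
\E|\braket{(P_{t+\varepsilon}-P_t)\gamma,\eta}| \le C(\varepsilon + \sqrt{\varepsilon})|\eta||\gamma| \longrightarrow 0.
\end{equation*}

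\textbf{Main obstacle.} The real difficulty is not any single computation but the bookkeeping: one must pick a single $r>0$ that is simultaneously admissible in Theorem \ref{th:1.1} (with sufficiently high $q$ to absorb $|\bar X_s|^{2m+1}$), in Theorem \ref{t:adjoint:eq} (for the pair $(p,q)$), and in Lemma \ref{l:4.1} applied to $y^{t,\eta}$ at the high moments required by H\"older, so that the weight $e^{-r(s-t)}$ in Step 1 really dominates the exponential growth produced by each factor. Once this discount factor has been fixed, the duality \eqref{eq:final_duality_second_adjoint} holds by definition of $P_t$, the $L^2$ bound is a reformulation of Step 1, and the continuity in $t$ is reduced to the quantitative estimate \eqref{eq:continuity:y:initial_data}.
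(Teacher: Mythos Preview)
Your overall strategy matches the paper's: define $\Lambda_t(\eta,\gamma)$ by the right-hand side of \eqref{eq:final_duality_second_adjoint}, bound it via H\"older and the moment estimates on $\bar X$, $(p,q)$, $y^{t,\cdot}$, read off the matrix $P_t$ from the bilinear form, and decompose $\braket{(P_{t+\varepsilon}-P_t)\gamma,\eta}$ into the four pieces you list. Pieces (a), (b), (c) are handled exactly as in the paper.

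There is, however, a genuine gap in your treatment of piece (d). You claim the ``mismatch of conditional expectations'' $\bigl(\E^{\Fcal_{t+\varepsilon}}-\E^{\Fcal_t}\bigr)Z$ (with $Z=\int_t^\infty e^{-r(s-t)}\braket{D_x^2H(s)y^{t,\gamma}_s,y^{t,\eta}_s}\,ds$) vanishes ``once one takes the outer expectation $\E$ and uses the tower property''. The tower property only gives $\E\bigl[\E^{\Fcal_{t+\varepsilon}}Z-\E^{\Fcal_t}Z\bigr]=0$; it says nothing about $\E\bigl|\E^{\Fcal_{t+\varepsilon}}Z-\E^{\Fcal_t}Z\bigr|$, which is what \eqref{eq:continuity:P_t} requires. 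This term does \emph{not} carry an $\varepsilon$-rate at all, so your final inequality $\E|\braket{(P_{t+\varepsilon}-P_t)\gamma,\eta}|\le C(\varepsilon+\sqrt\varepsilon)|\eta||\gamma|$ cannot be right. The paper closes this gap by invoking L\'evy's (downward) martingale convergence theorem: since the filtration is right-continuous, $\Fcal_{t+\varepsilon}\searrow\Fcal_t$ as $\varepsilon\searrow0$, and for $Z\in L^1$ one has $\E^{\Fcal_{t+\varepsilon}}Z\to\E^{\Fcal_t}Z$ in $L^1(\Omega)$. This yields convergence but no rate, which is all that is claimed.

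A secondary point: you deduce progressive measurability from ``continuity in $t$ proved in Step~4 below'', but (i) there is no Step~4, and (ii) what you prove is $L^1$-continuity, not pathwise continuity. One does get a progressively measurable modification of an adapted, stochastically continuous process, but this is a nontrivial fact; the paper explicitly cites \cite{ECP2548} and \cite{fuhrman2013stochastic} for it rather than treating it as obvious.
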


\begin{proof}
For $\gamma$ and $\eta \in \R^n$ fixed, let us define $\braket{P_t\gamma,\eta}$ by the formula given in the statement. To do so we have chosen an arbitrary version of the conditional expectation. To construct the process $P_t$ we have to prove that the map $(\gamma,\eta) \mapsto \braket{P_t\gamma, \eta}$ is a continuous bilinear form. Note that
\begin{equation}\label{eq:P bound estimate}
\begin{split}
\Big|&\E^{\mathcal{F}_t}\int_t^\infty e^{-r(s-t)} \braket{D_x^2 H(s) y^{t,\eta}_s ,y^{t,\gamma}_s} ds \Big| \\
&\le \E^{\mathcal{F}_t}\int_t^\infty e^{-r(s-t)} \abs{D_x^2 H(s)}\abs{y^{t,\eta}_s}\abs{y^{t,\gamma}_s}ds \\
&\leq C\int_t^\infty \left( e^{-r(s-t)}\E^{\mathcal{F}_t}\abs{D_x^2 H(s)}^p \right)^{1/p} \left( e^{-r(s-t)}\E^{\mathcal{F}_t}\abs{y^{t,\eta}_s}^{2q} \right)^{\frac{1}{2q}}
 \left( e^{-r(s-t)}\E^{\mathcal{F}_t}\abs{y^{t,\gamma}_s}^{2q} \right)^{\frac{1}{2q}}ds \\
&\leq C\abs{\eta}\abs{\gamma} \left( \int_t^\infty e^{-r(s-t)}\E^{\mathcal{F}_t}\abs{D_x^2 H(s)}^p ds \right)^{1/p},
\end{split}
\end{equation}
where we used conditional H\"older inequality with $p \in (1,2), q = \frac{p}{p-1} >2$ and estimate \eqref{eq:estimate_y_gamma}. $r >0$ can be chosen such that $\int_t^\infty e^{-rs}\E^{\mathcal{F}_t}\abs{D_x^2 H(s)}^p ds < \infty$. This can be seen from the definition of the Hamiltonian, the estimates on first adjoint processes and the polynomial growth of the coefficients.
Further, there exists a set $N$  such  that $\mP(N) = 0$ and for $\omega \notin N$,
\begin{equation*}
\abs{\braket{P_t(\omega)\eta,\gamma}} \le C\abs{\eta}\abs{\gamma}.
\end{equation*}
If we set $P_t(\omega) = 0$ for $\omega \in N$, by now we have constructed an adapted process $P_t$ which satisfies equation \eqref{eq:final_duality_second_adjoint}. The symmetry of the process $P$ is obtained easily by symmetry of $D^2_x H(s)$.

To construct a progressive modification of $P_t$, it is sufficient to prove that the map
$(\omega,t) \mapsto P_t(\omega)$ is $\mathcal{F} \otimes \mathcal{B}(\R_+) \setminus \mathcal{B}(\R^{n \times n})-$measurable
(i.e. it is a jointly measurable process).
Here, $\mathcal{B}(\R^{n \times n})$ stands for a Borel $\sigma-$field induced by the norm $||\cdot||_2$ on $\R^{n \times n}$.
If we prove that $P$ is an $(\mathcal{F}_t)-$adapted and jointly measurable process then there is an $(\mathcal{F}_t)-$progressive
version of $P$. For a recent and elegant proof of this fact see \cite{ECP2548}.
Concerning joint measurability of $P$, its proof is given in \cite{fuhrman2013stochastic}. In that paper, in fact, even the existence of a progressive version in infinite dimensional setting is provided without any reference to the classical result. \bigskip

\noindent To show that \eqref{eq:continuity:P_t} holds, let us write
\begin{equation*}
\begin{split}
\braket{(P_{t+\varepsilon} - P_t)\eta,\gamma}&= \left( \E^{\mathcal{F}_{t+\varepsilon}}  - \E^{\mathcal{F}_t} \right) \int_t^\infty e^{-r(s-t)}\braket{D_x^2 H(s) y^{t,\eta}_s ,y^{t,\gamma}_s} ds\\
& \quad -\E^{\mathcal{F}_{t+\varepsilon}} \int_t^{t+\varepsilon}e^{-r(s-t)}\braket{D_x^2 H(s) y^{t,\eta}_s ,y^{t,\gamma}_s} ds\\
& \quad + \E^{\mathcal{F}_{t+\varepsilon}} \int_{t+\varepsilon}^{\infty} e^{-r(s-t-\varepsilon)}\left( \braket{D_x^2 H(s) y^{t+\varepsilon,\eta}_s ,y^{t+\varepsilon,\gamma}_s} - \braket{D_x^2 H(s) y^{t,\eta}_s ,y^{t,\gamma}_s} \right)ds \\
& \quad +\E^{\mathcal{F}_{t+\varepsilon}} \int_{t+\varepsilon}^{\infty} \left( e^{-r(s-t-\varepsilon)} - e^{-r(s-t)}\right)  \braket{D_x^2 H(s) y^{t,\eta}_s ,y^{t,\gamma}_s}ds.
\end{split}
\end{equation*}
The first summand on the right hand side goes to zero in $L^1(\Omega)$ as $\varepsilon \searrow 0$ thanks to the L\'evy downward martingale convergence theorem (note that we have UC filtration $(\mathcal{F}_t)_{t \geq 0}$), the second one and the last one tend to zero in $L^1(\Omega)$ by dominated convergence theorem. Regarding the third term the result easily follows by using \eqref{eq:continuity:y:initial_data}. Indeed we can rewrite it as follows
\begin{equation}
\begin{split}
&\E^{\mathcal{F}_{t+\varepsilon}} \int_{t+\varepsilon}^{\infty} e^{-r(s-t-\varepsilon)}\braket{D_x^2 H(s) \lp y^{t+\varepsilon,\eta}_s - y^{t,\eta}_s \rp ,y^{t+\varepsilon,\gamma}_s}ds\\
&+ \E^{\mathcal{F}_{t+\varepsilon}} \int_{t+\varepsilon}^{\infty} e^{-r(s-t-\varepsilon)}\braket{D_x^2 H(s) y^{t,\eta}_s ,y^{t+\varepsilon,\gamma}_s - y^{t,\gamma}_s}ds = I_1 + I_2.
\end{split}
\end{equation}
Using H\"older inequality with $p \in (1,2), q = \frac{p}{p-1} >2$, the first addendum $I_1$ can be estimate by
\begin{equation}
\begin{split}
I_1 \leq e^{t+\varepsilon}&\int_{t+\varepsilon}^{\infty} \left( e^{-rs}\E^{\mathcal{F}_{t+\varepsilon}} \abs{D_x^2 H(s)}^p\right)^{1/p}\left( e^{-rs}\E^{\mathcal{F}_{t+\varepsilon}} \abs{y^{t+\varepsilon,\eta}_s - y^{t,\eta}_s}^{2q}\right)^{\frac{1}{2q}} \\
& \qquad \cdot \left( e^{-rs}\E^{\mathcal{F}_{t+\varepsilon}} \abs{y^{t+\varepsilon,\gamma}_s}^{2q}\right)^{\frac{1}{2q}} ds.
\end{split}
\end{equation}
Repeating the same estimate for the second addendum  $I_2$, using Lemma \ref{l:4.1} and \eqref{eq:continuity:y:initial_data} we get the required result.
\end{proof}

\begin{remark}\label{rem:def_P_RV}
\normalfont
If $F,G$ are random variables in $L^2(\Omega)$ measurable with respect to $\mathcal{F}_t$ then it is true that
\[ \braket{P_tF,G} = \E^{\Fcal_t} \int_t^\infty e^{-r(s-t)} \Big<D_x^2 H(s) y^{t,F}_s ,y^{t,G}_s \Big> ds, \qquad \mP-\text{a.s.} \]
The proof follows by applying similar procedure as in Peng and Shi, \cite{peng2000infinite}, Theorem 13.
\end{remark}

\begin{proposition}\label{p:last:estimtes}
Let $\lp y_t^\varepsilon\rp_{t \geq 0}$ be a solution to the first variation equation (\ref{eq:firstVariation}).
Then there exists $r>0$ such that the following relations hold true.
\begin{equation*}
\begin{split}
&i) \quad  e^{-r\left(t_0+\varepsilon\right)}\E \big<\left( P_{t_0+\varepsilon} - P_{t_0}\right)y_{t_0+\varepsilon}^\varepsilon,y_{t_0+\varepsilon}^\varepsilon\big> = o(\varepsilon), \\
&ii) \quad e^{-r\left(t_0+\varepsilon\right)}\E\braket{P_{t_0}y_{t_0+\varepsilon}^\varepsilon,y_{t_0+\varepsilon}^\varepsilon} \\
&\qquad = \sum_{j=1}^d \E\int_0^\infty e^{-rs} \big<P_s\delta \sigma^j(\bar{X}_s,\bar{u}_s),\delta \sigma^j(\bar{X}_s,\bar{u}_s)\big>ds + o(\varepsilon).
\end{split}
\end{equation*}
\end{proposition}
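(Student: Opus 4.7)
The plan combines three ingredients already in the paper: the trace-type integral representation of Proposition~\ref{p:def:P_t} together with Remark~\ref{rem:def_P_RV}; the $L^1$-continuity \eqref{eq:continuity:P_t} of $t\mapsto P_t$ and the uniform moment estimate $\sup_t\E\|P_t\|^q<\infty$ that follows from \eqref{eq:P bound estimate}; and the moment bounds $\sup_t e^{-\rho_2 kt}\E|y^\varepsilon_t|^{2k}=O(\varepsilon^k)$ of Proposition~\ref{p:expansion}. The observation driving (ii) is that, because $y^\varepsilon_{t_0}=0$, the value $y^\varepsilon_{t_0+\varepsilon}$ decomposes to leading order as the Brownian integral $M_\varepsilon:=\sum_{j=1}^d\int_{t_0}^{t_0+\varepsilon}\delta\sigma^j_s\,dW^j_s$ (of order $\sqrt{\varepsilon}$ in every $L^b$) plus a remainder $R_\varepsilon:=y^\varepsilon_{t_0+\varepsilon}-M_\varepsilon$ collecting the drift and the $y^\varepsilon$-linear piece of \eqref{eq:firstVariation}; by BDG and the $L^{2c}$-bound on $y^\varepsilon_s$, $\E|R_\varepsilon|^c=O(\varepsilon^c)$ for every $c\ge 2$.

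For (i), I would write $\langle(P_{t_0+\varepsilon}-P_{t_0})y^\varepsilon_{t_0+\varepsilon},y^\varepsilon_{t_0+\varepsilon}\rangle=\opn{Tr}\bigl((P_{t_0+\varepsilon}-P_{t_0})\,y^\varepsilon_{t_0+\varepsilon}(y^\varepsilon_{t_0+\varepsilon})^T\bigr)$, apply Cauchy--Schwarz in Frobenius norm and H\"older (exponents $p\in(1,2)$, $q=p/(p-1)$) to bound the expectation by $(\E\|P_{t_0+\varepsilon}-P_{t_0}\|^p)^{1/p}(\E|y^\varepsilon_{t_0+\varepsilon}|^{2q})^{1/q}$. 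The second factor is $O(\varepsilon)$ by Proposition~\ref{p:expansion}. For the first, I reduce Frobenius convergence to the $n^2$ scalar convergences $(P_{t_0+\varepsilon}-P_{t_0})_{ij}=\langle(P_{t_0+\varepsilon}-P_{t_0})e_i,e_j\rangle$ and invoke \eqref{eq:continuity:P_t} on each basis pair to get $\E\|P_{t_0+\varepsilon}-P_{t_0}\|\to 0$; interpolation against the uniform $L^q$ bound on $\|P_t\|$ promotes this to $\E\|P_{t_0+\varepsilon}-P_{t_0}\|^p\to 0$. The product is therefore $o(\varepsilon)$.

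For (ii), after expanding
\[
\langle P_{t_0}y^\varepsilon_{t_0+\varepsilon},y^\varepsilon_{t_0+\varepsilon}\rangle
=\langle P_{t_0}M_\varepsilon,M_\varepsilon\rangle
+2\langle P_{t_0}M_\varepsilon,R_\varepsilon\rangle
+\langle P_{t_0}R_\varepsilon,R_\varepsilon\rangle,
\]
the cross term and the $R_\varepsilon R_\varepsilon$ term are controlled by H\"older against the $L^q$ moments of $\|P_{t_0}\|$ and the estimates $\|M_\varepsilon\|_{L^b}=O(\sqrt\varepsilon)$, $\|R_\varepsilon\|_{L^c}=O(\varepsilon)$, yielding $O(\varepsilon^{3/2})$ and $O(\varepsilon^2)$ respectively, hence $o(\varepsilon)$. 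Since $P_{t_0}$ is $\Fcal_{t_0}$-measurable, the conditional It\^o isometry combined with orthogonality of distinct Brownian components gives
\[
\E\langle P_{t_0}M_\varepsilon,M_\varepsilon\rangle=\sum_{j=1}^d\E\int_{t_0}^{t_0+\varepsilon}\langle P_{t_0}\delta\sigma^j_s,\delta\sigma^j_s\rangle\,ds.
\]
I then substitute $P_s$ for $P_{t_0}$ inside the integral, paying an error bounded by $C\int_{t_0}^{t_0+\varepsilon}(\E\|P_{t_0}-P_s\|^p)^{1/p}\,ds=o(\varepsilon)$ via the matrix-norm continuity established in (i) together with the uniform polynomial moment bound on $|\delta\sigma^j_s|^2$ (from (H3) and Theorem~\ref{th:1.1}); the swap $e^{-r(t_0+\varepsilon)}\to e^{-rs}$ contributes only $O(\varepsilon^2)$. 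Since $\delta\sigma^j_s$ vanishes outside $E_\varepsilon$, the integration extends freely to $[0,\infty)$, producing the claimed identity.

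The hard part in both claims is upgrading the pointwise $L^1$-continuity of $P_t$ (stated in Proposition~\ref{p:def:P_t} for fixed $\gamma,\eta$) to a matrix-norm, $L^p$-type continuity that can be composed with random, $s$-dependent inputs such as $y^\varepsilon_{t_0+\varepsilon}$ in (i) and $\delta\sigma^j_s$ in (ii). Finite dimensionality is the saving grace: summing \eqref{eq:continuity:P_t} over a fixed finite family of basis pairs gives Frobenius-norm convergence in $L^1$, and interpolation against $\sup_t\E\|P_t\|^q<\infty$ then supplies the $L^p$-operator convergence needed to close both estimates.
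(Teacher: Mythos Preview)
Your argument is correct, and it takes a genuinely different route from the paper's.

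For part (i) the paper does \emph{not} upgrade \eqref{eq:continuity:P_t} to matrix-norm convergence. Instead it localises: it splits the expectation according to whether $\varepsilon^{-1/2}y^\varepsilon_{t_0+\varepsilon}$ lies in a large ball $B_{C\delta^{-1/4}}$ or not, controls the complement via the Markov inequality and the uniform $L^2$ bound on $P_t$, and on the ball covers by finitely many deterministic centres $x_i$ so that \eqref{eq:continuity:P_t} can be applied with the fixed vectors $x_i$. Your shortcut---summing \eqref{eq:continuity:P_t} over the $n^2$ basis pairs to obtain $\E\|P_{t_0+\varepsilon}-P_{t_0}\|\to 0$, then interpolating against $\sup_t\E\|P_t\|^2<\infty$ to reach $L^p$ for $p\in(1,2)$---is valid and simpler. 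It is, however, specific to finite dimensions: the paper's covering argument is inherited from \cite{fuhrman2013stochastic}, where the state space is infinite-dimensional and the basis-pair trick is unavailable.

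For part (ii) the paper does not decompose $y^\varepsilon_{t_0+\varepsilon}=M_\varepsilon+R_\varepsilon$. It instead applies the It\^o formula to the matrix process $e^{-rs}Y^\varepsilon_s=e^{-rs}y^\varepsilon_s(y^\varepsilon_s)^T$ via \eqref{eq:Y.general}, takes $\E^{\mathcal{F}_{t_0}}$, multiplies by $P_{t_0}$, and shows term by term that every contribution coming from $\Gamma$ except $\sum_j\delta\sigma^j(\delta\sigma^j)^T$ is $o(\varepsilon)$. Both approaches lead to $\sum_j\E\int_{t_0}^{t_0+\varepsilon}e^{-r\tau}\langle P_{t_0}\delta\sigma^j_\tau,\delta\sigma^j_\tau\rangle\,d\tau+o(\varepsilon)$, and both then replace $P_{t_0}$ by $P_\tau$---the paper by repeating the localisation argument of (i), you by the $L^p$ matrix-norm continuity you established. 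Your decomposition is arguably more transparent (it isolates the dominant $\sqrt{\varepsilon}$ Gaussian piece by hand), while the paper's $Y^\varepsilon$-computation stays closer to the formal duality with the second adjoint BSDE \eqref{eq:second:adjoint} and again extends more readily to infinite dimensions.
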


\begin{proof}
\textbf{(i)} From Proposition \ref{p:expansion}-(ii) we know that there exists $r$ such that
\begin{equation}\label{eq:prop:exp_rewritten}
\left( e^{-r(t_0+\varepsilon)}\E\abs{\varepsilon^{1/2}y_{t_0+\varepsilon}^\varepsilon}^{2k} \right)^{1/2k} \leq C , \qquad k \geq 1,
\end{equation}
and by the Markov inequality, for every $\delta >0$ we have that
\begin{equation*}
\mP\left( \abs{ \varepsilon^{1/2}y_{t_0+\varepsilon}^\varepsilon} >C\delta^{-1/4} \right) \leq e^{r(t_0+\varepsilon)}\delta.
\end{equation*}
If we denote $\Omega_{\delta,\varepsilon}$ the event $\lbrace\varepsilon^{-1/2}y_{t_0+\varepsilon}^\varepsilon \in B_{C\delta^{-1/4}}\rbrace$, where $B_{C\delta^{-1/4}}$ is the centred ball with radius $\delta^{-1/4}$, then it holds
\begin{equation}\label{eq:prob_omega}
\mP(\Omega^c_{\delta,\varepsilon}) \leq e^{r(t_0+\varepsilon)}\delta.
\end{equation}
Now we rewrite \textbf{(i)} in the following form
\begin{equation*}
\begin{split}
&e^{-r\left(t_0+\varepsilon\right)}\E \big< \left( P_{t_0+\varepsilon} - P_{t_0}\right)\varepsilon^{-1/2} y_{t_0+\varepsilon}^\varepsilon,\varepsilon^{-1/2}y_{t_0+\varepsilon}^\varepsilon \big>\\
 &= e^{-r\left(t_0+\varepsilon\right)}\E\left( \big<\left( P_{t_0+\varepsilon} - P_{t_0}\right)\varepsilon^{-1/2} y_{t_0+\varepsilon}^\varepsilon,\varepsilon^{-1/2}y_{t_0+\varepsilon}^\varepsilon\big> 1_{\Omega_{\delta,\varepsilon}^c} \right) \\
 & \quad + e^{-r\left(t_0+\varepsilon\right)}\E\left( \big<\left( P_{t_0+\varepsilon} - P_{t_0}\right)\varepsilon^{-1/2} y_{t_0+\varepsilon}^\varepsilon,\varepsilon^{-1/2}y_{t_0+\varepsilon}^\varepsilon\big> 1_{\Omega_{\delta,\varepsilon}} \right) \\
 &=: A_1^\varepsilon + A_2^\varepsilon.
\end{split}
\end{equation*} 	
The first term can be easily treated by the H\"older inequality, Proposition \ref{p:def:P_t} and estimates \eqref{eq:prop:exp_rewritten}, \eqref{eq:prob_omega}, respectively. Hence, the estimate reads
\begin{equation}
\begin{split}
\abs{A_1^{\varepsilon}} &\leq \left(e^{-r\left(t_0+\varepsilon\right)} \E\norm{P_{t_0+\varepsilon} - P_{t_0}}^2 \right)^{1/2} \\
&\quad \cdot\left( e^{-r\left(t_0+\varepsilon\right)}\E\abs{\varepsilon^{-1/2} y_{t_0+\varepsilon}^\varepsilon}^8 \right)^{1/4}\left( e^{-r\left(t_0+\varepsilon\right)}\mP\left(\Omega_{\delta,\varepsilon}^c \right)\right)^{1/4} \\
&\leq C\delta^{1/4}.
\end{split}
\end{equation}

\noindent Regarding the second term, we have that
\[ \abs{A_2^\varepsilon} \leq e^{-r\left(t_0+\varepsilon\right)}\E \left[ \sup_{x \in B_{C\delta^{-1/4}}} \big|\braket{\left( P_{t_0+\varepsilon} - P_{t_0}\right)x,x} 1_{\Omega_{\delta,\varepsilon}}\big| \right]. \]
Since $B_{C\delta^{-1/4}}$ is compact, there exist $N_\delta$ open balls $B_{x_i,\delta}$ which cover it. Moreover, for all $x \in B_{C\delta^{-1/4}}$ we can choose $i$ such that $\abs{x-x_i} \le \delta$. Then
\begin{equation}
\begin{split}
\braket{\left( P_{t_0+\varepsilon} - P_{t_0}\right)x,x} &= \braket{\left( P_{t_0+\varepsilon} - P_{t_0}\right)x_i,x_i} - \braket{\left( P_{t_0+\varepsilon} - P_{t_0}\right)(x-x_i),(x-x_i)}\\
& \quad + 2\braket{\left( P_{t_0+\varepsilon} - P_{t_0}\right)x,(x-x_i)} \\
&= \braket{\left( P_{t_0+\varepsilon} - P_{t_0}\right)x_i,x_i} + \norm{P_{t_0+\varepsilon} - P_{t_0}}_{\infty} \delta^2\\
& \quad + 2\norm{P_{t_0+\varepsilon} - P_{t_0}}_{\infty}\abs{x}\delta,
\end{split}
\end{equation}
where, for a generic matrix $T \in \R^{n\times n}$, we have used $\norm{T}_{\infty} := \sup \lbrace \abs{\braket{Tx,y}}: x,y \in \R^n, \abs{x} \leq 1, \abs{y} \leq 1 \rbrace$. Taking supremum  and expectation we obtain
\begin{equation}
\abs{A_2^\varepsilon} \leq \sum_{i=1}^{N_\delta}\E\abs{\braket{\left( P_{t_0+\varepsilon} - P_{t_0}\right)x_i,x_i}} + C\left( \delta^2 + \delta^{3/4}\right).
\end{equation}
If we let $\varepsilon \to 0$ and use \eqref{eq:continuity:P_t} it follows that
\[ \limsup_{\varepsilon\to 0}\abs{A_2^\varepsilon} \leq  C\left( \delta^2 + \delta^{3/4}\right),\]
hence $\abs{A_1^\varepsilon} + \abs{A_2^\varepsilon} \to 0$, when $\delta \to 0$ and the proof of \textbf{(i)} is finished.\\
\newline
\textbf{(ii)} Let us rewrite $e^{-r\left(t_0+\varepsilon\right)}\E\braket{P_{t_0}y_{t_0+\varepsilon}^\varepsilon,y_{t_0+\varepsilon}^\varepsilon}$ in the following form
\begin{equation*}
e^{-r\left(t_0+\varepsilon\right)}\E\braket{P_{t_0}y_{t_0+\varepsilon}^\varepsilon,y_{t_0+\varepsilon}^\varepsilon}
= \E \left[ \opn{Tr}\left\{ P_{t_0}\left(e^{-\frac{r}{2}\left(t_0+\varepsilon\right)}y_{t_0+\varepsilon}^\varepsilon\right) \left(e^{-\frac{r}{2}\left(t_0+\varepsilon\right)}y_{t_0+\varepsilon}^\varepsilon\right)^T  \right\}  \right].
\end{equation*}
Thanks to the It\^o formula and equation \eqref{eq:Y.general}, we obtain
\begin{equation}
\begin{split}
d\left(e^{-rs}Y^\varepsilon_s\right) &= e^{-rs}\Bigl[ -rY^\varepsilon_s + D_xb(\bar{X}_s,\bar{u}_s)Y^{\varepsilon}_s + Y^{\varepsilon}_s D_xb(\bar{X}_s,\bar{u}_s)^T \Bigr. \\
& \quad + \sum_{j=1}^d D_x\sigma^j(\bar{X}_s,\bar{u}_s) Y^{\varepsilon}_s D_x\sigma^j(\bar{X}_s,\bar{u}_s)^T + \Gamma(s)\Bigr] ds \\
& \quad + \sum_{j=1}^d e^{-rs}\Bigl[ D_x\sigma^j(\bar{X}_s,\bar{u}_s) Y^{\varepsilon}_s +
Y^{\varepsilon}_s D_x\sigma^j(\bar{X}_s,\bar{u}_s)^T  + \Lambda^j(s) \Bigr] dW^j_s,
\end{split}
\end{equation}
where we have used the notation $Y^\varepsilon_s = y^\varepsilon_s\left( y^\varepsilon_s \right)^T$ and $\Gamma(s)$, $\Lambda^j(s)$ are as in \eqref{eq:Gamma}, \eqref{eq:Lambda}. Now, by taking conditional expectation with respect to $\mathcal{F}_{t_0}$ and rewriting the equation in integral form from $t_0$ to $s$
(recall Remark \ref{rem:y}) it follows that
\begin{equation*}
\begin{split}
\E^{\mathcal{F}_{t_0}}\left(e^{-rs}Y^\varepsilon_s\right) &= \E^{\mathcal{F}_{t_0}}\int_{t_0}^s e^{-r\tau}\Bigl[ -rY^\varepsilon_\tau + D_xb(\bar{X}_\tau,\bar{u}_\tau)Y^{\varepsilon}_\tau + Y^{\varepsilon}_\tau D_xb(\bar{X}_\tau,\bar{u}_\tau)^T \Bigr]d\tau \\
&\quad + \sum_{j=1}^d \E^{\mathcal{F}_{t_0}}\int_{t_0}^s \Bigr[ D_x\sigma^j(\bar{X}_\tau,\bar{u}_\tau) Y^{\varepsilon}_\tau D_x\sigma^j(\bar{X}_\tau,\bar{u}_\tau)^T + \Gamma(\tau) \Bigr]d\tau.
\end{split}
\end{equation*}	
Hence, taking into account the definition of $\Gamma$ in (\ref{eq:Gamma}), multiplying by $P_{t_0}$, setting $s=t_0+\varepsilon$ and taking expectation, we arrive at
\begin{equation*}
\begin{split}
&e^{-r\left(t_0+\varepsilon\right)}\E\braket{P_{t_0}y_{t_0+\varepsilon}^\varepsilon,y_{t_0+\varepsilon}^\varepsilon} \\
&= \int_{t_0}^{t_0+\varepsilon} e^{-r\tau} \E \left[ \opn{Tr} \left\{  P_{t_0}\left( -rY^\varepsilon_\tau + D_xb(\bar{X}_\tau,\bar{u}_\tau)Y^{\varepsilon}_\tau + Y^{\varepsilon}_\tau D_xb(\bar{X}_\tau,\bar{u}_\tau)^T \right)\right\} \right]d\tau \\
& \quad + \sum_{j=1}^d\int_{t_0}^{t_0+\varepsilon} e^{-r\tau} \E \left[\opn{Tr} \left\{  P_{t_0}\left( D_x\sigma^j(\bar{X}_\tau,\bar{u}_\tau) Y^{\varepsilon}_\tau D_x\sigma^j(\bar{X}_\tau,\bar{u}_\tau)^T \right)\right\} \right]d\tau \\
& \quad + \sum_{j=1}^d\int_{t_0}^{t_0+\varepsilon} e^{-r\tau} \E \left[\opn{Tr} \left\{  P_{t_0}\left( \delta \sigma^j(\bar{X}_\tau,\bar{u}_\tau) \left(\delta \sigma^j(\bar{X}_\tau,\bar{u}_\tau) \right)^T \right)\right\} \right]d\tau \\
& \quad + \sum_{j=1}^d\int_{t_0}^{t_0+\varepsilon} e^{-r\tau} \E \left[\opn{Tr} \left\{  P_{t_0}\left( D_x\sigma^j(\bar{X}_\tau,\bar{u}_\tau) y^{\varepsilon}_\tau \left(\delta\sigma^j(\bar{X}_\tau,\bar{u}_\tau)\right)^T \right)\right\} \right]d\tau \\
& \quad + \sum_{j=1}^d\int_{t_0}^{t_0+\varepsilon} e^{-r\tau} \E \left[ \opn{Tr} \left\{  P_{t_0}\left( \delta\sigma^j(\bar{X}_\tau,\bar{u}_\tau) (y^{\varepsilon}_\tau)^T D_x\sigma^j(\bar{X}_\tau,\bar{u}_\tau)^T \right)\right\} \right]d\tau.
\end{split}
\end{equation*}
We will show that using the estimate for $y^\varepsilon_s$ in the form of \eqref{eq:prop:exp_rewritten}, the only term which is not of order  $o(\varepsilon)$ is the third one. Therefore, the final equality will read
\begin{equation}\label{eq:pyy_preliminary}
\begin{split}
&e^{-r\left(t_0+\varepsilon\right)}\E\braket{P_{t_0}y_{t_0+\varepsilon}^\varepsilon,y_{t_0+\varepsilon}^\varepsilon}\\
&= \sum_{j=1}^d \E \int_{t_0}^{t_0+\varepsilon} e^{-r\tau} \Big<P_{t_0} \delta \sigma^j(\bar{X}_\tau,\bar{u}_\tau), \delta \sigma^j(\bar{X}_\tau,\bar{u}_\tau)\Big>d\tau + o(\varepsilon).
\end{split}
\end{equation}
For sake of completeness, let us estimate the second term as
\begin{equation*}
\begin{split}
& \sum_{j=1}^d\int_{t_0}^{t_0+\varepsilon} e^{-r\tau} \E \left[ \opn{Tr} \left\{  P_{t_0}\left( D_x\sigma^j(\bar{X}_\tau,\bar{u}_\tau) Y^{\varepsilon}_\tau D_x\sigma^j(\bar{X}_\tau,\bar{u}_\tau)^T \right)\right\} \right]d\tau \\
&= \sum_{j=1}^d\int_{t_0}^{t_0+\varepsilon} e^{-r\tau} \E \big<P_{t_0}D_x\sigma^j(\bar{X}_\tau,\bar{u}_\tau) y^{\varepsilon}_\tau, D_x\sigma^j(\bar{X}_\tau,\bar{u}_\tau) y^{\varepsilon}_\tau\big>d\tau \\
&\leq \sum_{j=1}^d\int_{t_0}^{t_0+\varepsilon}  e^{-r\tau} \E \left[ \norm{P_{t_0}}_2 \abs{D_x\sigma^j(\bar{X}_\tau,\bar{u}_\tau)}^2\abs{y^{\varepsilon}_\tau}^2 \right]d\tau \\
&\leq \sum_{j=1}^d \int_{t_0}^{t_0+\varepsilon}\E\left(  e^{-r\tau}\norm{P_{t_0}}_2^2 \right)^{1/2}\E\left(  e^{-r\tau}\abs{D_x\sigma^j(\bar{X}_\tau,\bar{u}_\tau)}^4 \right)^{1/2}\E\left(  e^{-r\tau}\abs{y^{\varepsilon}_\tau}^4 \right)^{1/2} d\tau,
\end{split}
\end{equation*}
and the order of $o(\varepsilon)$ now follows by Proposition \ref{p:def:P_t}, the polynomial growth of $D_x\sigma(\cdot)$ and estimate \eqref{eq:prop:exp_rewritten}, respectively. The remaining terms can be treated in the similar way. \smallskip

\noindent To finalize the proof of \eqref{eq:pyy_preliminary}, it remains to be shown that
\begin{equation}
\sum_{j=1}^d \E \int_{t_0}^{t_0+\varepsilon} e^{-r\tau} \Big<\left( P_\tau - P_{t_0}\right) \delta \sigma^j(\bar{X}_\tau,\bar{u}_\tau), \delta \sigma^j(\bar{X}_\tau,\bar{u}_\tau)\Big>d\tau = o(\varepsilon).
\end{equation}
But this is easily obtained by repeating the same arguments as in the proof of \textbf{(i)}. The proof of the Proposition is now concluded.
\end{proof}

\section{Necessary stochastic maximum principle}

For our main result we need to recall the notion of the Hamiltonian of the system. Given the control problem \eqref{SDE}-\eqref{functional}, let us define $H: \R^n \times U \times \R^n \times \R^{n\times d} \to \R$ as
\begin{equation}\label{Hamiltonian}
H(x,u,p,q) = \braket{p,b(x,u)} + \opn{Tr}\left[ q^T \sigma(x,u) \right] - f(x,u).
\end{equation}

\begin{theorem}\label{main_thm}
Assume (H1)-(H5) hold and let $(\bar{X},\bar{u})$ be an optimal pair. Then there exist $r >0$, a pair $(p,q) \in L^{2,-r}_{\mathcal{F}}(\R_+; \R^n) \times L^{2,-r}_{\mathcal{F}}(\R_+; \R^{n\times d})$ and a progressively measurable process $(P_t)_{t \ge 0}$ with values in $\mathcal{S}^n$ such that the following variational inequality holds, $\mP \otimes dt-$a.e.
\begin{equation*}
H(\bar{X}_t,v,p_t,q_t) -  H(\bar{X}_t,\bar{u}_t,p_t,q_t) + \frac{1}{2}\sum^d_{j=1}\Big<P_t\left( \sigma^j(\bar{X}_t,v) - \sigma^j(\bar{X}_t,\bar{u}_t) \right), \sigma^j(\bar{X}_t,v) - \sigma^j(\bar{X}_t,\bar{u}_t)\Big> \le 0,
\end{equation*}
for every $v \in U$. The pair of processes $(p,q)$ is the unique solution to the first adjoint equation \eqref{eq.adjoint_first}. The definition of the process $P_t$ is given in Proposition \ref{p:def:P_t} and the process satisfies $\sup_{t\ge 0} \E\norm{P_t}_2^2 < \infty$.
\end{theorem}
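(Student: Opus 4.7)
The strategy is the classical three-step derivation of the SMP via spike variation, adapted to the infinite horizon setting: (i) plug the cost expansion of Proposition~\ref{p:cost:functional1} into a duality produced by the first adjoint BSDE, (ii) eliminate the remaining quadratic term in $y^\varepsilon$ by invoking the representation of $P_t$ together with Proposition~\ref{p:last:estimtes}, and (iii) conclude by optimality and Lebesgue differentiation, letting the spike $(t_0,\varepsilon,v)$ vary.

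\emph{Step 1 (first-order duality).} I would apply the It\^o formula to $e^{-rt}\braket{p_t, y^\varepsilon_t + z^\varepsilon_t}$ on $[0,T]$, using the adjoint equation \eqref{eq.adjoint_first} together with \eqref{eq:firstVariation}--\eqref{eq:secondVariation}. The $rp$, $D_x b^T p$ and $D_x\sigma^T q$ terms cancel by construction. The boundary term $e^{-rT}\E\braket{p_T, y^\varepsilon_T + z^\varepsilon_T}$ is handled by Cauchy--Schwarz with the weighted $L^{2,-r}$ bounds of Theorem~\ref{t:adjoint:eq} and Theorem~\ref{th:Variation equations} and vanishes along a subsequence $T_n\uparrow\infty$. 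Recognising
\[
\braket{p_t,\delta b_t} + \sum_j \braket{q^j_t,\delta\sigma^j_t} = \delta H_t + \delta f_t, \qquad \braket{p_t,D_x^2 b(y^\varepsilon)^2} + \sum_j \braket{q^j_t,D_x^2\sigma^j(y^\varepsilon)^2} = \braket{D_x^2 H(t)y^\varepsilon,y^\varepsilon} + \braket{D_x^2 f y^\varepsilon,y^\varepsilon},
\]
substituting back in Proposition~\ref{p:cost:functional1}, and noting that the $\delta f$ and $D_x^2 f$ contributions cancel, I obtain
\[
J(u^\varepsilon) - J(\bar u) = -\E\int_{E_\varepsilon} e^{-rt}\,\delta H(\bar X_t,\bar u_t,p_t,q_t)\,dt - \frac{1}{2}\E\int_0^\infty e^{-rt}\braket{D_x^2 H(t) y^\varepsilon_t, y^\varepsilon_t}\,dt + o(\varepsilon),
\]
where the cross term $\sum_j \E\int_{E_\varepsilon} e^{-rt}\braket{q^j_t,\delta(D_x\sigma^j)y^\varepsilon_t}dt = O(\varepsilon^{3/2}) = o(\varepsilon)$ by Proposition~\ref{p:expansion}~(ii).

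\emph{Step 2 (second-order duality).} The goal is to convert the $D_x^2 H$ term into the $P$-quadratic form on $E_\varepsilon$. Fix $t = t_0+\varepsilon$ and apply the representation \eqref{eq:final_duality_second_adjoint} to the $\mathcal{F}_{t_0+\varepsilon}$-measurable random datum $y^\varepsilon_{t_0+\varepsilon}$, which is legitimate by Remark~\ref{rem:def_P_RV}. By Remark~\ref{rem:y} the process $y^\varepsilon$ solves on $[t_0+\varepsilon,\infty)$ exactly the homogeneous equation \eqref{eq:firstVariation duality for second adjoint} with initial datum $y^\varepsilon_{t_0+\varepsilon}$; uniqueness gives $y^{t_0+\varepsilon,y^\varepsilon_{t_0+\varepsilon}}_s = y^\varepsilon_s$ for every $s \ge t_0+\varepsilon$. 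Taking expectation, discounting, and using $y^\varepsilon \equiv 0$ on $[0,t_0]$ together with $\E|y^\varepsilon_t|^2 = O(\varepsilon)$ on $E_\varepsilon$, I get
\[
e^{-r(t_0+\varepsilon)}\E\braket{P_{t_0+\varepsilon} y^\varepsilon_{t_0+\varepsilon}, y^\varepsilon_{t_0+\varepsilon}} = \E\int_0^\infty e^{-rs}\braket{D_x^2 H(s) y^\varepsilon_s, y^\varepsilon_s}\,ds + o(\varepsilon).
\]
Combining with Proposition~\ref{p:last:estimtes}(i) (which lets me replace $P_{t_0+\varepsilon}$ by $P_{t_0}$) and Proposition~\ref{p:last:estimtes}(ii) (which identifies the resulting quantity with $\sum_j\E\int_{E_\varepsilon}e^{-rs}\braket{P_s\delta\sigma^j_s,\delta\sigma^j_s}ds$, since $\delta\sigma^j$ is supported on $E_\varepsilon$), I obtain the desired identity \eqref{eq:final.relation}.

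\emph{Step 3 (optimality and Lebesgue point).} Substituting the identity of Step 2 into the expansion of Step 1 produces
\[
J(u^\varepsilon) - J(\bar u) = -\E\int_{E_\varepsilon} e^{-rt}\left[\delta H(\bar X_t,\bar u_t,p_t,q_t) + \frac{1}{2}\sum_{j=1}^d \braket{P_t\delta\sigma^j_t,\delta\sigma^j_t}\right]dt + o(\varepsilon).
\]
Dividing by $\varepsilon$, using $J(u^\varepsilon) \ge J(\bar u)$, letting $\varepsilon\searrow 0$, and applying Lebesgue's differentiation theorem to the integrand (which is integrable on $\R_+$ in $t$ thanks to the exponential weight and the polynomial bounds of the data) at almost every $t_0$, I obtain the variational inequality for the arbitrary spike value $v\in U$. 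A separability argument in $v$ (using (H1)) removes the dependence of the null set on $v$ and yields the $\mP\otimes dt$-a.e.\ statement of the theorem.

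\emph{Main obstacle.} The principal difficulty is not algebraic but one of integrability on $\R_+$: a single $r$ must simultaneously control (a) the vanishing of the It\^o boundary at $T\to\infty$ in Step 1, (b) the identification of $P_{t_0+\varepsilon}$ against the random initial datum $y^\varepsilon_{t_0+\varepsilon}$ in Step 2, and (c) the convergence in Proposition~\ref{p:last:estimtes}. All three force quantitative matching between the weights of the spaces $L^{2,-r}_{\mathcal{F}}$ carrying $(p,q)$, $(y^\varepsilon,z^\varepsilon)$ and $(P_t)$; once $r$ is large enough (as tracked in the Appendix) to make these Cauchy--Schwarz/conditional H\"older estimates work together with the moment bounds of Proposition~\ref{p:expansion}, Proposition~\ref{p:def:P_t} and Lemma~\ref{l:4.1}, the remaining manipulations are routine.
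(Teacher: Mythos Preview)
Your proposal is correct and follows essentially the same route as the paper: Step~1 reproduces the computation of Proposition~\ref{prop:difference functionals} (It\^o on $e^{-rt}\braket{p_t,y^\varepsilon_t+z^\varepsilon_t}$, cancellation of the $D_xb$, $D_x\sigma$, $r$ terms, and the $o(\varepsilon)$ cross term $\braket{q^j,\delta(D_x\sigma^j)y^\varepsilon}$), while Step~2 is exactly the paper's argument---identifying $y^\varepsilon_s$ with $y_s^{t_0+\varepsilon,\,y^\varepsilon_{t_0+\varepsilon}}$ on $[t_0+\varepsilon,\infty)$, invoking Remark~\ref{rem:def_P_RV}, and then applying Proposition~\ref{p:last:estimtes}(i)--(ii). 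The only cosmetic difference is that the paper leaves Step~3 as ``standard arguments'' whereas you spell out the Lebesgue-point and separability reasoning.
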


\begin{remark}\label{rm:r}
\normalfont
A sufficient condition for such $r$ is given in the Appendix.
\end{remark}
Before proving the theorem, it is useful to rewrite the variation of cost functional in a suitable form, as the following proposition suggests.
\begin{proposition}\label{prop:difference functionals}
There exists $r >0$ such that the following expansion holds
\begin{equation}\label{eq:spike_cost2}
\begin{split}
J\lp u^{\varepsilon} (\cdot) \rp - J \lp \bar{u}(\cdot) \rp &= \E \int_0^{\infty} e^{-rt} \left[ -\sum_{j=1}^{d} \braket{q^j_t,\delta\sigma^j(\bar{X}_t,\bar{u}_t)} - \braket{p_t,\delta b(\bar{X}_t,\bar{u}_t)} + \delta f(\bar{X}_t,\bar{u}_t) \right]dt \\
&\quad - \frac{1}{2} \E \int_0^{\infty}e^{-rt} \opn{Tr}\left[ D_x^2H(\bar{X}_t,\bar{u}_t,p_t,q_t)y^{\varepsilon}_t\left(y^{\varepsilon}_t\right)^T \right]dt + o(\varepsilon),
\end{split}
\end{equation}
where $H(\bar{X}_t,\bar{u}_t,p_t,q_t)$ is the Hamiltonian of the system computed along the optimal trajectory.
\end{proposition}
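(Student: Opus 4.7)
The plan is to start from the expansion in Proposition \ref{p:cost:functional1} and use an It\^o-based duality between the first adjoint pair $(p,q)$ and the variation processes $y^{\varepsilon}$, $z^{\varepsilon}$ in order to convert the linear contributions $\braket{D_xf(\bar X_t,\bar u_t), y^{\varepsilon}_t + z^{\varepsilon}_t}$ into spike-type integrals paired with $(p,q)$; the quadratic pieces that remain will then recombine with the $\frac12\braket{D_x^2 f\, y^{\varepsilon},y^{\varepsilon}}$ term of Proposition \ref{p:cost:functional1} into the Hessian of the Hamiltonian.

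Concretely, I would first apply It\^o's formula to $e^{-rt}\braket{p_t, y^{\varepsilon}_t}$ on $[0,T]$. By the structure of the adjoint drift in \eqref{eq.adjoint_first}, the $D_x b$ and $D_x\sigma$ contributions coming from $dp$, $dy^{\varepsilon}$ and their quadratic covariation cancel in pairs, so that the net drift reduces to $e^{-rt}\bigl[\braket{D_x f, y^{\varepsilon}_t} + \sum_j \braket{q^j_t, \delta\sigma^j(\bar X_t,\bar u_t)}\bigr]$. Using $y^{\varepsilon}_0 = 0$, the membership $(p,q) \in L^{2,-r}_{\mathcal F}$ and the moment bounds of Proposition \ref{p:expansion}, the boundary term $e^{-rT}\E\braket{p_T, y^{\varepsilon}_T}$ can be shown to vanish along some sequence $T_n \nearrow \infty$ in the spirit of \eqref{eq:duality for second adjoint vanishing term}, yielding
\begin{equation*}
\E\int_0^{\infty} e^{-rt}\braket{D_x f(\bar X_t,\bar u_t), y^{\varepsilon}_t}\, dt = -\E\int_0^{\infty} e^{-rt}\sum_{j=1}^d \braket{q^j_t, \delta\sigma^j(\bar X_t,\bar u_t)}\, dt.
\end{equation*}
Repeating the computation with $z^{\varepsilon}$ in place of $y^{\varepsilon}$, now taking into account the extra forcing terms of \eqref{eq:secondVariation}, produces
\begin{equation*}
\begin{split}
\E\int_0^{\infty} e^{-rt}\braket{D_x f, z^{\varepsilon}_t}\, dt &= -\E\int_{E_{\varepsilon}} e^{-rt}\braket{p_t, \delta b(\bar X_t,\bar u_t)}\, dt \\
&\quad - \tfrac12 \E\int_0^{\infty} e^{-rt}\Bigl[\braket{p_t, D_x^2 b\,(y^{\varepsilon}_t)^2} + \sum_{j=1}^d \braket{q^j_t, D_x^2\sigma^j\,(y^{\varepsilon}_t)^2}\Bigr] dt + R_{\varepsilon},
\end{split}
\end{equation*}
with remainder $R_{\varepsilon} := -\E\int_{E_{\varepsilon}} e^{-rt}\sum_j \braket{q^j_t, \delta(D_x\sigma^j)\, y^{\varepsilon}_t}\, dt$. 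Since $y^{\varepsilon}$ vanishes at $t_0$ by Remark \ref{rem:y} and the first variation equation is driven on $E_{\varepsilon}$ only by the perturbation $\delta\sigma^j$, a direct estimate gives $\E|y^{\varepsilon}_t|^{2k} \le C(t-t_0)^k$ on $E_{\varepsilon}$; combined with $\int_{E_{\varepsilon}}\E|q_t|^2\, dt \to 0$ (a consequence of $q \in L^{2,-r}_{\mathcal F}$) and H\"older's inequality (together with the polynomial bound on $D_x\sigma$ and the moment estimate \eqref{eq:state estimate 1}), this yields $|R_{\varepsilon}| = o(\varepsilon)$.

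Substituting the two identities into the expansion of Proposition \ref{p:cost:functional1} and collecting the second-order pieces, the key algebraic observation is that, from the definition of the Hamiltonian \eqref{Hamiltonian},
\begin{equation*}
\opn{Tr}\bigl[D_x^2 H(\bar X_t,\bar u_t,p_t,q_t)\, y^{\varepsilon}_t (y^{\varepsilon}_t)^T\bigr] = \braket{p_t, D_x^2 b\,(y^{\varepsilon}_t)^2} + \sum_{j=1}^d \braket{q^j_t, D_x^2\sigma^j\,(y^{\varepsilon}_t)^2} - \braket{D_x^2 f\, y^{\varepsilon}_t, y^{\varepsilon}_t},
\end{equation*}
so that the three Hessian contributions assemble precisely into the single term $-\tfrac12 \opn{Tr}[D_x^2 H\, y^{\varepsilon}(y^{\varepsilon})^T]$ appearing in \eqref{eq:spike_cost2}, and the remaining $\delta f$ and $-\braket{p,\delta b}$ pieces are already localized on $E_{\varepsilon}$ by construction. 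The main obstacle is the justification of the duality on the unbounded horizon: the parameter $r$ must be chosen large enough that $p$, $q$, $y^{\varepsilon}$ and $z^{\varepsilon}$ all lie in the relevant weighted $L^2$ spaces and the boundary term at $T_n \to \infty$ in the It\^o identity actually vanishes; this is exactly the source of the restriction on $r$ mentioned in Remark \ref{rm:r} and collected in the Appendix.
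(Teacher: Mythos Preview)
Your proposal is correct and follows essentially the same route as the paper's proof: apply It\^o's formula to $e^{-rt}\braket{p_t,y^{\varepsilon}_t}$ and $e^{-rt}\braket{p_t,z^{\varepsilon}_t}$, pass to the limit along a sequence $T_n\nearrow\infty$ to kill the boundary term, estimate the residual $\E\int_{E_{\varepsilon}}e^{-rt}\sum_j\braket{q^j_t,\delta(D_x\sigma^j)\,y^{\varepsilon}_t}\,dt$ as $o(\varepsilon)$ via H\"older and $\int_{E_{\varepsilon}}\E e^{-rt}|q_t|^2\,dt\to 0$, and then recombine the second-order terms through the identity for $D_x^2 H$. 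The only cosmetic difference is that the paper invokes Proposition~\ref{p:expansion}(ii) directly for the $y^{\varepsilon}$ bound on $E_{\varepsilon}$, whereas you sketch the equivalent pointwise estimate $\E|y^{\varepsilon}_t|^{2k}\le C(t-t_0)^k$; both lead to the same $o(\varepsilon)$ conclusion.
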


\begin{proof}
See Appendix.
\end{proof}

\noindent Now we are in position to end the proof of the SMP.

\begin{proof}[Proof of Theorem \ref{main_thm} ]
The difficult step of the proof is to show that the following holds
\begin{equation}\label{eq:final_relation}
\E\int_0^\infty e^{-rs} \braket{D_x^2 H(s) y^\varepsilon_s,y^\varepsilon_s}ds = \sum_{j=1}^d \E\int_0^\infty e^{-rs} \Big<P_s\delta \sigma^j(\bar{X}_s,\bar{u}_s),\delta \sigma^j(\bar{X}_s,\bar{u}_s)\Big>ds + o(\varepsilon).
\end{equation}
Indeed, if relation \eqref{eq:final_relation} holds true then by using Proposition \ref{prop:difference functionals} we get
\begin{equation}\label{eq:spike_cost3}
\begin{split}
0 &\le J\lp u^{\varepsilon} (\cdot) \rp - J \lp \bar{u}(\cdot) \rp  \\
&= \E \int_0^{\infty} e^{-rs} \left[ -\sum_{j=1}^{d} \braket{q^j_s,\delta\sigma^j(\bar{X}_s,\bar{u}_s)} - \braket{p_s,\delta b(\bar{X}_s,\bar{u}_s)} + \delta f(\bar{X}_s,\bar{u}_s) \right]dt \\
&\quad -\frac{1}{2}\sum_{j=1}^d \E\int_0^\infty e^{-rs} \Big<P_s\delta \sigma^j(\bar{X}_s,\bar{u}_s),\delta \sigma^j(\bar{X}_s,\bar{u}_s)\Big>ds + o(\varepsilon),
\end{split}
\end{equation}
thanks to the optimality of $\bar{u}(\cdot)$. Now the final variational inequality follows by standard arguments, i.e. by using the definition of
$\delta \sigma^j, \delta b, \delta f$, noting that $E_{\varepsilon} = [t_0, t_0 + \varepsilon]$ and by sending $\varepsilon \searrow 0$. \medskip

\noindent Let us focus on the proof of \eqref{eq:final_relation}. Recalling Remark \ref{rem:y}, we can rewrite the left hand side of \eqref{eq:final_relation} in the following form
\begin{equation*}
\begin{split}
\E&\int_0^\infty e^{-rs} \braket{D_x^2 H(s) y^\varepsilon_s,y^\varepsilon_s}ds \\
&= \E\int_{t_0}^{t_0+\varepsilon} e^{-rs} \braket{D_x^2 H(s) y^\varepsilon_s,y^\varepsilon_s}ds + \E\int_{t_0 + \varepsilon}^{\infty} e^{-rs} \braket{D_x^2 H(s) y^\varepsilon_s,y^\varepsilon_s}ds \\
&= \E\int_{t_0 + \varepsilon}^{\infty} e^{-rs} \Big<D_x^2 H(s) y_s^{t_0+\varepsilon, y^\varepsilon_{t_0+\varepsilon}},y_s^{t_0+\varepsilon, y^\varepsilon_{t_0+\varepsilon}}\Big>ds + o(\varepsilon), \\
\end{split}
\end{equation*}
where we have used Proposition \ref{p:expansion} to estimate the first integral and the identity $y_s^\varepsilon = y_s^{t_0+\varepsilon, y^\varepsilon_{t_0+\varepsilon}}$, for $s \ge t_0 +\varepsilon$.
Taking into account Remark \ref{rem:def_P_RV}, we finally deduce the following decomposition
\begin{equation*}
\begin{split}
\E\int_0^\infty e^{-rs} \braket{D_x^2 H(s) y^\varepsilon_s,y^\varepsilon_s}ds &= e^{-r\left(t_0+\varepsilon\right)}\E\braket{P_{t_0+\varepsilon}y_{t_0+\varepsilon}^\varepsilon,y_{t_0+\varepsilon}^\varepsilon} + o(\varepsilon)\\
&= e^{-r\left(t_0+\varepsilon\right)}\E\Big<\left( P_{t_0+\varepsilon} - P_{t_0}\right)y_{t_0+\varepsilon}^\varepsilon,y_{t_0+\varepsilon}^\varepsilon\Big> \\
& \quad + e^{-r\left(t_0+\varepsilon\right)}\E\big<P_{t_0}y_{t_0+\varepsilon}^\varepsilon,y_{t_0+\varepsilon}^\varepsilon\big> + o(\varepsilon).
\end{split}
\end{equation*}
Finally, by Proposition \ref{p:last:estimtes}, the proof of the Theorem is now concluded.
\end{proof}

\section{acknowledgements}
The authors wish to thank to Marco Fuhrman and Gianmario Tessitore for encouragement and for many valuable discussions.

\section{appendix}

\subsection{Proof of Proposition \ref{p:expansion}}

\begin{proof}
In the following we are going to linearize the equations satisfied by $\xi^{\varepsilon}(\cdot), \eta^{\varepsilon}(\cdot)$ and $\zeta^{\varepsilon}(\cdot)$ in order to use the estimate obtained in Lemma \ref{l:4.1}.\\
\newline
\textbf{(i)} It is easy to see that the equation for $\xi^{\varepsilon}(\cdot)$ can be rewritten in the form
\begin{equation*}
d\xi^{\varepsilon}_t= \bigl[ G_b(t)\xi^{\varepsilon}_t + \delta b_t\chi_{E_{\varepsilon}}(t)\bigr]dt + \sum_{j=1}^d \bigl[ G^j_{\sigma}(t)\xi^{\varepsilon}_t + \delta \sigma^j_t\chi_{E_{\varepsilon}}(t)\bigr]dW^j_t,
\end{equation*}
where
\[ G_b(t):= \int_0^1 D_xb(\bar{X}_t + \theta\xi^{\varepsilon}_t,u^{\varepsilon}_t)d\theta, \qquad  G_{\sigma}^j(t):= \int_0^1 D_x\sigma^j(\bar{X}_t + \theta\xi^{\varepsilon}_t, u^{\varepsilon}_t)d\theta. \]
Thanks to Hypothesis (H4) we can apply Lemma \ref{l:4.1} and obtain (the constant $K>0$ varies from line to line)

\begin{equation}\label{eq:deltab}
\begin{split}
\sup_{t\in \R_+} e^{-rkt} \E\abs{\xi^{\varepsilon}_t}^{2k} &\leq K\Bigl[ \int_0^{\infty} e^{-\frac{r}{2}t}\left( \E\abs{\delta b_t\chi_{E_{\varepsilon}}(t)}^{2k}\right)^{\frac{1}{2k}} dt\Bigr]^{2k} \\
& \quad + K\sum_{j=1}^d \Bigl[ \int_0^{\infty} e^{-rt} \left( \E\abs{\delta \sigma^{j}_t\chi_{E_{\varepsilon}}(t)}^{2k} \right)^{\frac{1}{k}} dt\Bigr]^{k}\\
&\leq K \Bigl[ \int_{E_{\varepsilon}} e^{-\frac{r}{2}t}\left( \E\abs{b(\bar{X}_t,u^{\varepsilon}_t) - b(\bar{X}_t,\bar{u}_t)}^{2k}\right)^{\frac{1}{2k}}dt\Bigr]^{2k} \\
& \quad + K\sum_{j=1}^d \Bigl[ \int_{E_{\varepsilon}} e^{-rt} \left( \E\abs{\sigma^j_t(\bar{X}_t,u^{\varepsilon}_t) - \sigma^{j}_t(\bar{X}_t,\bar{u}_t)}^{2k} \right)^{\frac{1}{k}} dt\Bigr]^{k} \\
&\leq K[\varepsilon^{2k} + \varepsilon^{k}] \leq K\varepsilon^{k},
\end{split}
\end{equation}
thanks to the polynomial growth of the coefficients and the boundedness of the integration interval $E_{\varepsilon}$. Indeed, remember that it is easy to control all the moments of $\bar{X}$ up to a fixed time. In this case the discount factor $\rho_1$ can be chosen equal to the initial one $\rho_1 = r$.\\
\newline
\textbf{(ii)}
Using again the global monotonicity assumption and Lemma \ref{l:4.1}, the estimate for $y^{\varepsilon}$ follows in the same way.\\
\newline
\textbf{(iii)} For $z^{\varepsilon}$ we start by estimating its norm in the space $L_{\mathcal{F}}^{2k, -rk\alpha}(\R_+;\R^n)$, for a generic $\alpha\in \R$. Using the same technique as in Lemma \ref{l:4.1}, we obtain
\begin{equation*}
\begin{split}
\sup_{t\in \R_+}e^{-rk\alpha t}\E\abs{z^{\varepsilon}_t}^{2k} &\leq K\Bigl[ \int_0^{\infty} e^{-\frac{r\alpha}{2} t}\left( \E \abs{\delta b_t\chi_{E_{\varepsilon}}(t) + \dfrac{1}{2}D^2_xb(\bar{X}_t,\bar{u}_t)(y^{\varepsilon}_t)^2}^{2k} \right)^{\frac{1}{2k}}dt\Bigr]^{2k} \\
& \quad + K \sum_{j=1}^d \Bigl[ \int_0^{\infty} e^{-r\alpha t}\Bigl(\E\abs{\delta (D_x\sigma^j_t) \chi_{E_{\varepsilon}}(t)y^{\varepsilon}_t + \dfrac{1}{2}D^2_x\sigma^j(\bar{X}_t,\bar{u}_t) (y^{\varepsilon}_t)^2}^{2k}\Bigr)^{\frac{1}{k}}dt\Bigr]^{k}.\\
\end{split}
\end{equation*}

\noindent The first term (with $\delta b_t$) can be treated as before, thanks to the boundedness of $E_\varepsilon$. Let us discuss the second one. It holds
\begin{equation*}
\begin{split}
&\int_0^{\infty} e^{-\frac{r\alpha}{2}t}\left( \E \abs{D^2_xb(\bar{X}_t,\bar{u}_t)(y^{\varepsilon}_t)^2}^{2k} \right)^{\frac{1}{2k}}dt\\
 &\leq \int_0^{\infty} \left( e^{-r\alpha kt} \E \abs{D^2_xb(\bar{X}_t,\bar{u}_t)}^{4k} \right)^{\frac{1}{4k}} \left( e^{-r\alpha kt} \E \abs{y^{\varepsilon}_t}^{8k} \right)^{\frac{1}{4k}} dt \\
&\leq \left( \sup_{t \ge 0} e^{-r\alpha kt} \E \abs{y^{\varepsilon}_t}^{8k} \right)^{\frac{1}{4k}} \int_0^{\infty} e^{-\frac{r\alpha}{4} t}\left( \E \abs{D^2_xb(\bar{X}_t,\bar{u}_t)}^{4k} \right)^{\frac{1}{4k}}dt \\
&\leq K\left( \sup_{t \ge 0} e^{-r\alpha kt} \E \abs{y^{\varepsilon}_t}^{8k} \right)^{\frac{1}{4k}} \int_0^{\infty} e^{-\frac{r\alpha}{4} t}\left( \E \abs{1 + \abs{\bar{X}_t}^{2m+1}}^{4k} \right)^{\frac{1}{4k}}dt \\
&\leq K\left( \sup_{t \ge 0} e^{-r\alpha kt} \E \abs{y^{\varepsilon}_t}^{8k} \right)^{\frac{1}{4k}} \left[ \int_0^{\infty} e^{-\frac{r\alpha}{4} t}dt + \int_0^{\infty} e^{-\frac{r\alpha}{4} t} \left( \E \abs{\bar{X}_t}^{4k(2m+1)} \right)^{\frac{1}{4k}}dt \right] \\
&\leq K\left( \sup_{t \ge 0} e^{-r\alpha kt} \E \abs{y^{\varepsilon}_t}^{8k} \right)^{\frac{1}{4k}} \left[ \int_0^{\infty} e^{-\frac{r\alpha}{4} t}dt + \left( \int_0^{\infty} e^{-\frac{r\alpha}{4} t} \E \abs{\bar{X}_t}^{4k(2m+1)} dt\right)^{\frac{1}{4k}} \right], \\
\end{split}
\end{equation*}
where we used the polynomial growth of $D^2_x b$ and Jensen inequality, assuming that $\int_0^{\infty} e^{-\frac{r\alpha}{4} t}dt < \infty$, hence $r\alpha > 0$. Moreover, if we choose $\alpha \geq \max \left( 4, 8k(2m+1)\right) = 8k(2m+1)$, we have that
\begin{equation}
\int_0^{\infty} e^{-\frac{r\alpha}{2}t}\left( \E \abs{D^2_xb(\bar{X}_t,\bar{u}_t)(y^{\varepsilon}_t)^2}^{2k} \right)^{\frac{1}{2k}}dt \leq K \varepsilon,
\end{equation}
where $K = K(r,k,m)$ depends on the choice of the initial discount factor, the order of integration and the polynomial growth of the coefficients of the state. Let us briefly sketch also the computations for the last addendum
\begin{equation*}
\begin{split}
&\int_0^{\infty} e^{-r\alpha t}\left( \E \abs{D^2_x\sigma^j(\bar{X}_t,\bar{u}_t)(y^{\varepsilon}_t)^2}^{2k} \right)^{\frac{1}{k}}dt\\
&\leq K\left( \sup_{t \ge 0} e^{-r\alpha kt} \E \abs{y^{\varepsilon}_t}^{8k} \right)^{\frac{1}{2k}} \int_0^{\infty} e^{-\frac{r\alpha}{2} t}\left( \E \abs{D^2_x\sigma^j(\bar{X}_t,\bar{u}_t)}^{4k} \right)^{\frac{1}{2k}}dt \\
&\leq K\left( \sup_{t \ge 0} e^{-r\alpha kt} \E \abs{y^{\varepsilon}_t}^{8k} \right)^{\frac{1}{2k}} \int_0^{\infty} e^{-\frac{r\alpha}{2} t}\left( \E \abs{1 + \abs{\bar{X}_t}^{m}}^{4k} \right)^{\frac{1}{2k}}dt. \\
\end{split}
\end{equation*}
If $\alpha \ge 4km$,  following the same strategy as above we end up with
\begin{equation}
\int_0^{\infty} e^{-r\alpha t}\left( \E \abs{D^2_x\sigma^j(\bar{X}_t,\bar{u}_t)(y^{\varepsilon}_t)^2}^{2k} \right)^{\frac{1}{k}}dt \leq K\varepsilon^2.
\end{equation}
Summing up all the estimates and using Lemma \ref{l:4.1}, we easily get the required result, for some $\rho_3$ big enough. In this case it is sufficient to choose $\rho_3 \ge \alpha r\ge 8k(2m+1)r$.\smallskip

\noindent \textbf{(iv)} Following Yong and Zhou \citep{Yo99}, it is easy to see that
\begin{equation*}
d\eta^{\varepsilon}_t = \left[ D_xb(\bar{X}_t,\bar{u}_t)\eta^{\varepsilon}_t + A^{\varepsilon}_t \right]dt + \sum_{j=1}^d \left[ D_x\sigma^j(\bar{X}_t,\bar{u}_t) \eta^{\varepsilon}_t + B^{j,\varepsilon}_t\right] dW^j_t,
\end{equation*}
where
\begin{equation*}
\begin{split}
A^{\varepsilon}_t &:= \delta b_t\chi_{E_{\varepsilon}}(t) + \left[G_b(t) - D_xb(\bar{X}_t,\bar{u}_t)\right]\xi^{\varepsilon}_t;\\
B^{j,\varepsilon}_t &:= \bigl(G_{\sigma}^j(t) - D_x^j\sigma(\bar{X}_t,\bar{u}_t)\bigr)\xi^{\varepsilon}_t.
\end{split}
\end{equation*}
Let us consider $A^{\varepsilon}(\cdot)$ first.
\begin{equation*}
\begin{split}
\int_0^{\infty} &e^{-\frac{r\alpha}{2}t} \left( \E \abs{A^{\varepsilon}_t}^{2k} \right)^{\frac{1}{2k}} dt \\
&\leq K \int_0^{\infty} e^{-\frac{r\alpha}{2}t} \Bigl( \E \abs{\delta b_t\chi_{E_{\varepsilon}}(t)}^{2k} \Bigr)^{\frac{1}{2k}}dt\\
& \quad + K \int_0^{\infty} e^{-\frac{r\alpha}{2}t} \Bigl( \E\big|\bigl[ G_b(t) - D_xb(\bar{X}_t,\bar{u}_t)) \bigr]\xi^{\varepsilon}_t \big|^{2k} \Bigr)^{\frac{1}{2k}} dt\\
&\leq K \varepsilon + K\left( \sup_{t \ge 0}e^{-r\alpha kt}\E\abs{\xi^{\varepsilon}_t}^{4k} \right)^{\frac{1}{4k}} \int_0^{\infty} e^{-\frac{r\alpha}{4}t} \left( \E \abs{G_b(t) - D_xb(\bar{X}_t,\bar{u}_t))}^{4k}\right)^{\frac{1}{4k}}dt \\
&\leq K \varepsilon + K\varepsilon^{1/2} \int_0^{\infty} e^{-\frac{r\alpha}{4}t} \left( \E \abs{G_b(t) - D_xb(\bar{X}_t,\bar{u}_t))}^{4k}\right)^{\frac{1}{4k}}dt, \\
\end{split}
\end{equation*}
due to the previous result with $\alpha \geq 2$, and H\"{older} inequality. Regarding the last term we have
\begin{equation*}
\begin{split}
G_b(t) - D_xb(\bar{X}_t,\bar{u}_t)) &= \int_0^1\bigl[ D_xb(\bar{X}_t + \theta\xi^{\varepsilon}_t,u^{\varepsilon}_t) - D_xb(\bar{X}_t,\bar{u}_t))\bigr] d\theta \\
&= \int_0^1\bigl[D_xb(\bar{X}_t + \theta\xi^{\varepsilon}_t,u^{\varepsilon}_t) - D_xb(\bar{X}_t + \theta \xi^{\varepsilon}_t,\bar{u}_t)\bigr] d\theta \\
& \quad + \int_0^1\bigl[ D_xb(\bar{X}_t + \theta\xi^{\varepsilon}_t,\bar{u}_t) - D_xb(\bar{X}_t,\bar{u}_t))\bigr] d\theta.
\end{split}
\end{equation*}
Hence, using the Taylor expansion with Lagrange rest, there exists $\tilde{x}$ (depending on $t$ and $\omega$) such that

\begin{equation*}
\begin{split}
\int_0^{\infty} &e^{-\frac{r\alpha}{4}t} \abs{G_b(t) - D_xb(\bar{X}_t,\bar{u}_t)}_{L^{4k}(\Omega)}dt\\
 &\leq \int_{E_{\varepsilon}} e^{-\frac{r\alpha}{4}t}\left( \E \Big| \int_0^1 D_xb(\bar{X}_t + \theta\xi^{\varepsilon}_t,u^{\varepsilon}_t) - D_xb(\bar{X}_t + \theta \xi^{\varepsilon}_t,\bar{u}_t)\bigr] d\theta \Big|^{4k} \right)^{\frac{1}{4k}} dt\\
& \quad +\int_0^{\infty} e^{-\frac{r\alpha}{4}t}\left( \E \Big|\int_0^1 D_x^2b(\tilde{x},\bar{u}_t)\theta\xi^{\varepsilon}(t) d\theta\Big|^{4k}\right)^{\frac{1}{4k}} dt \\
&\leq K\varepsilon + \left( \sup_{t\ge 0}e^{-r\alpha kt}\E\abs{\xi^{\varepsilon}_t}^{8k}\right)^{\frac{1}{8k}}\int_0^{\infty} e^{-\frac{r\alpha}{8}t} \Bigl(\E\abs{D_x^2b(\tilde{x},\bar{u}_t)}^{8k}\Bigr)^{\frac{1}{8k}}dt \\
&\leq  K(\varepsilon + \varepsilon^{1/2}),
\end{split}
\end{equation*}
thanks to the estimate obtained in  point $\textbf{(i)}$ and the polynomial growth of $D_x^2b$ (here we have to require $\alpha \ge 32k(2m+1)$). Then
\begin{equation*}
\begin{split}
\int_0^{\infty} e^{-\frac{r\alpha }{2}t}\left( \E \abs{A^{\varepsilon}_t}^{2k} \right)^{\frac{1}{2k}} dt &\leq K \varepsilon + K\varepsilon^{1/2} \int_0^{\infty} e^{-\frac{r\alpha}{4}t} \left( \E \abs{G_b(t) - D_xb(\bar{X}_t,\bar{u}_t)}^{4k} \right)^{\frac{1}{4k}}dt \\
&\leq K \varepsilon.
\end{split}
\end{equation*}
For $B^{j,\varepsilon}_t$, proceeding in a similar way we obtain
\[ \int_0^{\infty} e^{- r\alpha t} \Bigl( \E\abs{G_{\sigma}^j(t) - D_x\sigma^j(\bar{X}_t,\bar{u}_t)}^{4k} \Bigr)^{\frac{1}{2k}}dt \leq K\varepsilon. \]
To conclude, we apply Lemma \ref{l:4.1} to get
\begin{equation*}
\begin{split}
\sup_{t\in \R_+} e^{-r\alpha kt}\E\abs{\eta^{\varepsilon}_t}^{2k} &\leq K\left( \int_0^{\infty} e^{-\frac{r\alpha}{2}t} \left( \E \abs{A^{\varepsilon}_t}^{2k} \right)^{\frac{1}{2k}}dt\right)^{2k} + K \sum_{j=1}^d \left( \int_0^{\infty} e^{-r\alpha t} \bigl(\E\abs{B^{j,\varepsilon}_t}^{2k}\bigr)^{\frac{1}{k}}dt \right)^k \\
&\leq K \bigl(\varepsilon^{2k} + \varepsilon^{2k}\bigr) = O(\varepsilon^{2k}).
\end{split}
\end{equation*}

In this case $\rho_4$ can be chosen as $\rho_4 \ge r\alpha \ge 32k(2m+1)r$.\smallskip

\noindent \textbf{(v)} Let us denote $d\zeta^{\varepsilon}(t) = d(\eta^{\varepsilon}(t) - \xi^{\varepsilon}(t))$,
\[\begin{cases}
d\zeta^{\varepsilon}_t = \left( D_xb(\bar{X}_t,u_t)\zeta^{\varepsilon}_t + A^{\varepsilon}_t \right)dt + \sum_{j=1}^d\left( D_x\sigma^j(\bar{X}_t,u_t)\zeta^{\varepsilon}_t + B^{j,\varepsilon}_t \right)dW^j_t,\\
\zeta^{\varepsilon}(0) = 0,
\end{cases}\]
where
\begin{equation*}
\begin{split}
A^{\varepsilon}_T &:= \delta D_xb_t\chi_{E_{\varepsilon}}(t)\xi^{\varepsilon}_t + \dfrac{1}{2}\bigl[ \widetilde{G}_{b}(t) - D_x^2b(\bar{X}_t,u^{\varepsilon}_t) \bigr](\xi^{\varepsilon}_t)^2 \\
& \quad + \dfrac{1}{2}\delta D_x^2b_t\chi_{E_{\varepsilon}}(t)(\xi^{\varepsilon}_t)^2 + \dfrac{1}{2}D_x^2b(\bar{X}_t,u_t)[ (\xi^{\varepsilon}_t)^2 - (y^{\varepsilon}_t)^2 ],\\
&\quad \\
B^{\varepsilon}_t &:= \delta D_x\sigma_t\chi_{E_{\varepsilon}}(t)\eta^{\varepsilon}_t + \dfrac{1}{2}\bigl[ \widetilde{G}_{\sigma}(t) - D_x^2\sigma(\bar{X}_t,u^{\varepsilon}_t) \bigr](\xi^{\varepsilon}_t)^2 \\
& \quad + \dfrac{1}{2}\delta D_x^2\sigma_t\chi_{E_{\varepsilon}}(t)(\xi^{\varepsilon}_t)^2 + \dfrac{1}{2}D_x^2\sigma(\bar{X}_t,u_t) [(\xi^{\varepsilon}_t)^2 - (y^{\varepsilon}_t)^2 ],
\end{split}
\end{equation*}
and
\[\begin{cases}
\widetilde{G}_{b}(t) := 2 \int_0^1 \theta D_x^2b(\theta\bar{X}_t + (1-\theta)X^{\varepsilon}_t,u^{\varepsilon}_t)d\theta,\\
\widetilde{G}_{\sigma}(t) := 2 \int_0^1 \theta D_x^2\sigma(\theta\bar{X}_t + (1-\theta)X^{\varepsilon}_t,u^{\varepsilon}_t)d\theta.
\end{cases}\]

First, let us consider the $A^{\varepsilon}(\cdot)$ term. Applying the H\"older inequality gives
\begin{equation*}
\begin{split}
\int_0^\infty &e^{-\frac{r\alpha}{2}t}\left( \E \abs{A^{\varepsilon}_t}^{2k} \right)^{\frac{1}{2k}}dt \\
&\leq \int_0^\infty e^{-\frac{r}{2}t}\Bigl[ \bigl( \E \abs{\delta D_xb_t\chi_{E_{\varepsilon}}(t)\xi^{\varepsilon}_t}^{2k} \bigr)^{\frac{1}{2k}} + \dfrac{1}{2}\bigl( \E\abs{\bigl[ \widetilde{G}_b(t) - D_x^2b(\bar{X}(t),u^{\varepsilon}_t) \bigr](\xi^{\varepsilon}_t)^2}^{2k} \bigr)^{\frac{1}{2k}} \\
& \quad + \dfrac{1}{2}\bigl( \E \abs{\delta D_x^2b_t\chi_{E_{\varepsilon}}(t)(\xi^{\varepsilon}_t)^2}^{2k}\bigr)^{\frac{1}{2k}} + \dfrac{1}{2}\bigl( \E\abs{ D_x^2b(\bar{X}_t,u_t)[ (\xi^{\varepsilon}_t)^2 - (y^{\varepsilon}_t)^2]}^{2k} \bigr)^{\frac{1}{2k}}\Bigr]dt \\
&\leq K\bigl( \sup_{t \ge 0} e^{-r\alpha kt}\E\abs{\xi^{\varepsilon}_t}^{4k} \bigr)^{\frac{1}{4k}} \int_{E_{\varepsilon}}e^{-\frac{r\alpha}{4}t}\bigl( \E\abs{\delta D_xb_t}^{4k} \bigr)^{\frac{1}{4k}}dt \\
& \quad + K\bigl( \sup_{t \ge 0}e^{-r\alpha kt} \E\abs{\xi^{\varepsilon}_t}^{8k} \bigr)^{\frac{1}{4k}}\int_0^\infty e^{-\frac{r\alpha}{4}t} \bigl( \E\abs{\widetilde{G}_b(t) - D_x^2b(\bar{X}_t,u^{\varepsilon}_t)}^{4k} \bigr)^{\frac{1}{4k}}dt\\
& \quad +K\bigl( \sup_{t \ge 0} e^{-r\alpha kt}\E\abs{\xi^{\varepsilon}_t}^{8k} \bigr)^{\frac{1}{4k}}\int_{E_{\varepsilon}} e^{-\frac{r\alpha}{4}t} \bigl( \E\abs{\delta D_x^2b_t}^{4k} \bigr)^{\frac{1}{4k}}dt \\
& \quad + K \bigl(\sup_{t \ge 0} e^{-r\alpha kt} \E\abs{\eta^{\varepsilon}_t}^{8k} \bigr)^{\frac{1}{8k}}\bigl(\sup_{t \ge 0} e^{-r\alpha kt} \E\abs{\xi^{\varepsilon}_t + y^{\varepsilon}_t}^{8k} \bigr)^{\frac{1}{8k}}\int_0^\infty e^{-\frac{r\alpha t}{4}} \bigl( \E\abs{D_x^2b(\bar{X}_t,u_t)}^{4k} \bigr)^{\frac{1}{4k}}dt. \\
\end{split}
\end{equation*}
If $\alpha \ge 4$ the first and the third term con be easily controlled. For the last addendum we use the same technique as in $\textbf{(iii)}$ to get the boundedness of the integral for $\alpha \geq 8k(2m+1)$, hence

\begin{equation*}
\int_0^\infty e^{-\frac{r\alpha}{2}t}\left( \E \abs{A^{\varepsilon}_t}^{2k} \right)^{\frac{1}{2k}}dt \leq K\Big[ \varepsilon^{3/2} + \varepsilon \int_0^\infty e^{-\frac{r\alpha}{4}t} \bigl( \E\abs{\widetilde{G}_b(t) - D_x^2b(\bar{X}_t,u^{\varepsilon}_t)}^{4k} \bigr)^{\frac{1}{4k}}dt  + \varepsilon^2 + \varepsilon^{3/2}\Big].
\end{equation*}

Finally, we can rewrite $\E\abs{\widetilde{G}_b(t) - D_x^2b(t,\bar{x}(t),u^{\varepsilon}(t))}^{4k}$ in the following form
\begin{equation}
\begin{split}
&\widetilde{G}_{b}(t) - D_x^2b(\bar{X}_t,u^{\varepsilon}_t)= \\
&=2 \int_0^1 \theta D_x^2b(\theta\bar{X}_t + (1-\theta)X^{\varepsilon}_t,u^{\varepsilon}_t)d\theta - D_x^2b(\bar{X}_t,u^{\varepsilon}_t)\\
&= 2 \int_0^1 \theta \Bigl[ D_x^2b(\theta\bar{X}_t + (1-\theta)X^{\varepsilon}_t,u^{\varepsilon}_t) - D_x^2b(\bar{X}_t,u^{\varepsilon}_t)\Bigr] d\theta.
\end{split}
\end{equation}
If $\alpha \geq 8k(2m+1)$, by the continuity of the map $x \mapsto D_x^2b(x,u)$ and dominated convergence theorem it follows that
\begin{equation*}
\int_0^\infty e^{-\frac{r\alpha}{4}t} \left( \int_0^1 \theta \Bigl[ D_x^2b(\theta\bar{X}_t + (1-\theta)X^{\varepsilon}_t,u^{\varepsilon}_t) - D_x^2b(\bar{X}_t,u^{\varepsilon}_t)\Bigr] d\theta\right)^{\frac{1}{4k}}dt \to 0,
\end{equation*}
as $\varepsilon \to 0$. Then
\begin{equation*}
\begin{split}
\int_0^\infty e^{-\frac{r\alpha}{2}t}\left( \E \abs{A^{\varepsilon}_t}^{2k} \right)^{\frac{1}{2k}}dt \leq K[ \varepsilon^{3/2} + \varepsilon^2 + \varepsilon^{3/2} ] + o(\varepsilon) = o(\varepsilon).
\end{split}
\end{equation*}

\noindent For $B^{\varepsilon}(t)$ we proceed in the same way to obtain
\begin{equation*}
\int_0^\infty e^{-\frac{rt}{2k}} \left( \E \abs{B^{j,\varepsilon}(t)}^{2k} \right)^{\frac{1}{k}}dt = o(\varepsilon^2).
\end{equation*}
Using Lemma \ref{l:4.1}, the desired result follows for $\rho_5 \ge r\alpha \ge 8k(2m+1)r$.
\end{proof}

\subsection{Proof of Proposition \ref{p:estimate_Y}}

\begin{proof}
Since $Y^{\varepsilon}_t = y^{\varepsilon}_t(y^{\varepsilon}_t)^T$, the existence and uniqueness of a solution follow from the existence and uniqueness of the process $y^{\varepsilon}$ (see Theorem \ref{th:Variation equations}), with the restriction $r > 2c_{1/2}$. \smallskip

\noindent Let us now denote $A_t: = D_xb(\bar{X}_t,\bar{u}_t)$, $B^j_t:= D_x\sigma^j(\bar{X}_t,\bar{u}_t)$ and note that a symmetric (positive) definite matrix $Y^{\varepsilon}_t$ can be decomposed as
$Y^{\varepsilon}_t = \sum^n_{i=1} \gamma_i c_i c_i^T,$ where $\gamma_i \geq 0$ for all $i$ and $(c_i)_i$ is an orthonormal basis of $\R^n$.
Clearly, each $\gamma_i$ and $c_i$ depend both on $t$ and $\varepsilon$ but we omit this notation in the proof. \smallskip

\noindent Having in mind the above, one arrives at
\begin{equation*}
\begin{split}
\braket{A_tY^{\varepsilon}_t,Y^{\varepsilon}_t}_2 &= \opn{Tr}\lbrace	 A_tY^{\varepsilon}_t \lp Y^{\varepsilon}_t\rp^T \rbrace = \sum_{i = 1}^n \gamma_i^2 \opn{Tr} \lbrace A_tc_ic_i^Tc_ic_i^T \rbrace  = \sum_{i = 1}^n \gamma_i^2 \opn{Tr} \lbrace c_ic_i^TA_tc_ic_i^T \rbrace \\
&= \sum_{i = 1}^n \gamma_i^2 \braket{A_tc_i,c_i}\opn{Tr} \lbrace c_ic_i^T \rbrace  = \sum_{i = 1}^n \gamma_i^2 \braket{A_tc_i,c_i},
\end{split}
\end{equation*}
\begin{equation*}
\begin{split}
\braket{B^j_tY^{\varepsilon}_t(B^j_t)^T,Y^{\varepsilon}_t}_2 &= \opn{Tr}\Big\{B^j_t Y^{\varepsilon}_t (B^j_t)^T \lp Y^{\varepsilon}_t \rp^T \Big\} = \sum_{i=1}^n \gamma^2_i  \opn{Tr}\Big\{B^j_t c_i (c_i^T (B^j_t)^Tc_i )c_i^T  \Big\} \\
&= \sum_{i=1}^n \gamma^2_i  \braket{(B^j_t)^T c_i, c_i}^2
\leq \sum_{i=1}^n \abs{(B^j_t)^T (\gamma_i c_i)}^2 |c_i|^2 \\
&= \sum_{i=1}^n \abs{(B^j_t)^T (\gamma_i c_i)}^2 = \sum_{i=1}^n \gamma_i^2\abs{(B^j_t)^T c_i}^2,
\end{split}
\end{equation*}
\noindent and
\begin{equation*}
\begin{split}
\norm{B^j_tY^{\varepsilon}_t}^2_2 &= \opn{Tr} \Big\{ B^j_tY^{\varepsilon}_t \lp Y^{\varepsilon}_t\rp^T(B^j_t)^T \Big\}
 = \sum_{i=1}^n \gamma_i^2 \opn{Tr} \Big\{ B^j_tc_ic_i^Tc_ic_i^T (B^j_t)^T \Big\} \\
&= \sum_{i=1}^n \gamma_i^2 \opn{Tr} \Big\{ c_ic_i^T (B^j_t)^TB^j_tc_ic_i^T \Big\} = \sum_{i=1}^n \gamma_i^2 \braket{(B^j_t)^T B^j_tc_i,c_i}\\
&= \sum_{i=1}^n \gamma_i^2 \abs{B^j_tc_i}^2,
\end{split}
\end{equation*}
where we have used the basic properties of the Trace. Using these estimates we are able to prove the following dissipativity condition
\begin{equation*}
\begin{split}
&\braket{A_tY^{\varepsilon}_t +Y^{\varepsilon}_tA_t^T,Y^{\varepsilon}_t}
+ \sum_{j=1}^d\braket{ B_t^jY^{\varepsilon}_t(B_t^j)^T,Y^{\varepsilon}_t} + \sum_{j=1}^d\norm{B_t^jY^{\varepsilon}_t + Y^{\varepsilon}_tB^j_t}^2_2 \\
&\quad \leq 2 \sum_{i = 1}^n \gamma_i^2 \braket{A_tc_i,c_i} + \sum_{j = 1}^d\sum_{i=1}^n \gamma_i^2\abs{(B^j_t)^T c_i}^2
+ 2\sum_{j = 1}^d\sum_{i=1}^n \gamma_i^2 \abs{B^j_tc_i}^2 \\
& \quad = 2\sum_{i = 1}^n \gamma_i^2 \left( \braket{A_tc_i,c_i} + \frac{3}{2}\sum_{j=1}^d\abs{B^j_tc_i}^2 \right) \\
&\quad \leq 2c_{3/2} \sum_{i = 1}^n \gamma_i^2 \abs{c_i}^2 = 2c_{3/2} \norm{Y^{\varepsilon}_t}_2^2.
\end{split}
\end{equation*}
Now, applying the It\^{o} formula to $e^{-rt}\norm{Y^{\varepsilon}_t}_2^2$ we obtain, for every $T >0$ and $\delta>0$
\begin{equation*}
\begin{split}
\E &\big[e^{-rt}\norm{Y^{\varepsilon}_t}_2^2 \big]+ r \E \int_0^{T}e^{-rt}\norm{Y^{\varepsilon}_t}_2^2 dt\\
&= 2\E\int_0^{T}e^{-rt}\left[ \big< Y^{\varepsilon}_t, A_tY^{\varepsilon}_t +Y^{\varepsilon}_tA_t^T \big>
+ \sum_{j=1}^d \big<Y^{\varepsilon}_t, B_t^jY^{\varepsilon}_t(B_t^j)^T \big> \right]dt\\
& \quad + 2\E\int_0^{T}e^{-rt}\braket{Y^{\varepsilon}_t,\Gamma_t} dt
+ \sum_{j=1}^d \E\int_0^{T}e^{-rt} \norm{B_t^jY^{\varepsilon}_t + Y^{\varepsilon}_tB^j_t + \Lambda_t^j}_2^2 dt \\
&\leq  2c_{3/2}\E\int_0^{T}e^{-rt} \norm{Y^{\varepsilon}_t}_2^2 dt+ \delta \E\int_0^{T}e^{-rt} \norm{Y^{\varepsilon}_t}_2^2dt\\
& \quad + \frac{1}{\delta}\E\int_0^{T}e^{-rt} \norm{\Gamma_t}_2^2dt + \sum_{j=1}^d\int_0^{T}e^{-rt} \norm{\Lambda^j_t}_2^2 dt.
\end{split}
\end{equation*}
Hence
\begin{equation*}
(r-2c_{3/2} - \delta)\E\int_0^{T}e^{-rt} \norm{Y^{\varepsilon}_t}_2^2 dt \leq \frac{1}{\delta}\E\int_0^{T}e^{-rt} \norm{\Gamma_t}_2^2dt + \sum_{j=1}^d\int_0^{T}e^{-rt} \norm{\Lambda^j_t}_2^2 dt,
\end{equation*}
and the estimate follows for $r > 2 c_{3/2}$ by sending $T \rightarrow +\infty$.
The final estimate holds for $r > 2 \max  \lbrace c_{1/2}, c_{3/2} \rbrace$.
\end{proof}

\subsection{Proof of Proposition \ref{prop:difference functionals}}

\begin{proof}
If we compute the It\^{o} differential of the processes $e^{-rt}\braket{y^{\varepsilon}_t,p_t}$, where $p_t$ is the solution to the finite horizon equation \eqref{eq:first.adjoint.finite.horizon}, we obtain
\begin{equation*}
\begin{split}
d \Big( e^{-rt}\braket{y^{\varepsilon}_t,p_t} \Big)&= \braket{d(e^{-rt} y^{\varepsilon}_t),p_t} + e^{-rt}\braket{y_t,dp_t} + \sum_{j=1}^d \braket{q^j_t, D_x\sigma^j(\bar{X}_t,\bar{u}_t)y^{\varepsilon}_t + \delta \sigma^j(\bar{X}_t,\bar{u}_t)}dt \\
&= \Big[ -re^{-rt}\braket{y^{\varepsilon}_t,p_t} + e^{-rt}\braket{D_xb(\bar{X}_t,\bar{u}_t) y_t^{\varepsilon},p_t} - e^{-rt}\braket{y^{\varepsilon}_t, D_xb(\bar{X}_t,\bar{u}_t)^T y_t^{\varepsilon}} \Big.\\
& \quad - \sum_{j=1}^d e^{-rt}\braket{y^{\varepsilon}_t, D_x\sigma^j(\bar{X}_t,\bar{u}_t)^T q^j_t} + e^{-rt}\braket{y^{\varepsilon}_t,D_xf(\bar{X}_t,\bar{u}_t)} + re^{-rt}\braket{y^{\varepsilon}_t,p_t}\\
& \Big. \quad + \sum_{j=1}^d \braket{q^j_t, D_x\sigma^j(\bar{X}_t,\bar{u}_t)y^{\varepsilon}_t + \delta \sigma^j(\bar{X}_t,\bar{u}_t)} \Big]dt + M_tdW_t,
\end{split}
\end{equation*}
where the stochastic term is a local martingale with zero mean value (which can be proved by standard localization argument). Hence, by taking expectation we have for all $T >0$
\begin{equation*}
e^{-rT}\E\braket{y^{\varepsilon}_T,p_T} - \E\int_0^T e^{-rt} \braket{y^{\varepsilon}_t, D_xf(\bar{X}_t,\bar{u}_t) }dt  = \sum_{j=1}^d\E\int_0^T e^{-rt} \braket{q^j_t, \delta \sigma^j(\bar{X}_t,\bar{u}_t)}dt,
\end{equation*}
thanks to the fact that $y^{\varepsilon}_0= 0$. Since $(p_t)_{t \geq 0} \in L_{\mathcal{F}}^{2,-r}(\R_+;\R^n)$ then there exists a sequence of times $(T_n)_{n\geq 1}$
with $T_n \nearrow +\infty$ as $n \rightarrow +\infty$ such that along this sequence $\E \big[ e^{-rT_n}p_{T_{n}} \big] \to 0$. Hence, for all $n \in \mathbb{N}$ we have that
\begin{equation*}
\E\braket{e^{-rT_n}y^{\varepsilon}_{T_n},p_{T_n}} - \E\int_0^{T_n} e^{-rt} \braket{y^{\varepsilon}_t, D_xf(\bar{X}_t,\bar{u}_t) }dt  = \sum_{j=1}^d\E\int_0^{T_n} e^{-rt} \braket{q^j_t, \delta \sigma^j(\bar{X}_t,\bar{u}_t)}dt.
\end{equation*}
Thanks to the growth assumptions on $\sigma, f$ and to the regularity of $y^{\varepsilon}_t$ and $q_t$, we can send $T_n$ to infinity to end with
\begin{equation}\label{eq:duality:yp}
\E\int_0^\infty e^{-rt} \braket{y^{\varepsilon}_t, D_xf(\bar{X}_t,\bar{u}_t) }dt  = -\sum_{j=1}^d\E\int_0^\infty e^{-rt} \braket{q^j_t, \delta \sigma^j(\bar{X}_t,\bar{u}_t)}dt.
\end{equation}

\noindent Repeating the same argument for $e^{-rt}\braket{z^{\varepsilon}_t,p_t}$ we get
\begin{equation*}
\begin{split}
d \Big( e^{-rt}\braket{z^{\varepsilon}_t,p_t} \Big) &= \braket{d(e^{-rt} z^{\varepsilon}_t),p_t} + e^{-rt}\braket{z_t,dp_t} \\
& \quad + \sum_{j=1}^d \braket{q^j_t, D_x\sigma^j(\bar{X}_t,\bar{u}_t)z^{\varepsilon}_t + \delta \sigma^j(\bar{X}_t,\bar{u}_t)y^{\varepsilon}_t + \frac{1}{2}D_x^2\sigma^j (y^{\varepsilon}_t)^2}dt \\
&= \Big[ -re^{-rt}\braket{z^{\varepsilon}_t,p_t} + e^{-rt}\braket{D_xb(\bar{X}_t,\bar{u}_t) z_t^{\varepsilon},p_t} + e^{-rt}\braket{\delta b(\bar{X}_t,\bar{u}_t), p_t}\\
& \quad + \frac{1}{2}e^{-rt}\braket{D_x^2b(\bar{X}_t,\bar{u}_t)(y^{\varepsilon}_t)^2,p_t} - e^{-rt}\braket{z^{\varepsilon}_t, D_xb(\bar{X}_t,\bar{u}_t)^T y_t^{\varepsilon}} \\
& \quad - \sum_{j=1}^d e^{-rt}\braket{z^{\varepsilon}_t, D_x\sigma^j(\bar{X}_t,\bar{u}_t)^T q^j_t} + e^{-rt}\braket{z^{\varepsilon}_t,D_xf(\bar{X}_t,\bar{u}_t)} \\
& \quad + re^{-rt}\braket{z^{\varepsilon}_t,p_t} + \sum_{j=1}^d \braket{q^j_t, D_x\sigma^j(\bar{X}_t,\bar{u}_t)z^{\varepsilon}_t}\\
& \quad + \sum_{j=1}^d \braket{q^j_t, \delta \sigma^j(\bar{X}_t,\bar{u}_t)y^{\varepsilon}_t + \frac{1}{2}D_x^2\sigma^j (y^{\varepsilon}_t)^2} \Big]dt + N_tdW_t,
\end{split}
\end{equation*}
where the stochastic term is a local martingale with zero mean value (which can be proved by same argument as before). Hence, taking expectation we obtain for all $T >0$
\begin{equation*}
\begin{split}
-\E\int_0^T e^{-rt} \braket{z^{\varepsilon}_t, D_xf(\bar{X}_t,\bar{u}_t) }dt  &= \E\int_0^T e^{-rt} \braket{\delta b(\bar{X}_t,\bar{u}_t) + \frac{1}{2}D_x^2b(\bar{X}_t,\bar{u}_t)(y^{\varepsilon}_t)^2 ,p_t} dt  \\
& \quad + \frac{1}{2}\sum_{j=1}^d \E\int_0^T e^{-rt} \braket{D_x^2\sigma^j(\bar{X}_t,\bar{u}_t)(y^{\varepsilon}_t)^2,q_t}dt + o(\varepsilon).
\end{split}
\end{equation*}
The term $o(\varepsilon)$ comes from the following estimate
\begin{equation*}
\begin{split}
\Big|\E&\int_0^{\infty} e^{-rt}\braket{\delta D_x\sigma^j(\bar{X}_t,\bar{u}_t) y^{\varepsilon}_t,q^j_t}dt\Big| \\
&\leq C\E\int_{E_{\varepsilon}}e^{-rt}(1 + \abs{\bar{X}_t}^m)\abs{y^{\varepsilon}_t}\abs{q^j_t} dt \\
&\leq C \int_{E_{\varepsilon}} \left(\E \left[ e^{-rt}(1 + \abs{\bar{X}_t}^m)^2\abs{y^{\varepsilon}_t}^2 \right] \right)^{1/2}\left(\E e^{-rt}\abs{q^j_t}^2 \right)^{1/2}dt \\
&\leq C \sup_{t \in E_{\varepsilon}}\left( \E e^{-rt}\abs{y^{\varepsilon}_t}^4 \right)^{1/4} \int_{E_{\varepsilon}}\left(\E e^{-rt}\abs{q^j_t}^2 \right)^{1/2}dt \\
&\leq C \varepsilon \left( \int_{E_{\varepsilon}}\E e^{-rt}\abs{q^j_t}^2 dt \right)^{1/2},
\end{split}
\end{equation*}
and the last integral goes to zero as $\varepsilon$ goes to zero, since $\E \int_0^{\infty} e^{-rt}\abs{q^j_t}^2 dt < \infty$.
Applying the same strategy as before we can choose a sequence $(T_n)_{n\geq 1}$
with $T_n \nearrow +\infty$ as $n \rightarrow +\infty$ such that along this sequence $\E \big[ e^{-rT_n}p_{T_{n}} \big] \to 0$.
This way we end up with
\begin{equation}\label{eq:duality:zp}
\begin{split}
-\E\int_0^\infty e^{-rt}& \braket{z^{\varepsilon}_t, D_xf(\bar{X}_t,\bar{u}_t) }dt \\
&= \E\int_0^\infty e^{-rt} \braket{\delta b(\bar{X}_t,\bar{u}_t) + \frac{1}{2}D_x^2b(\bar{X}_t,\bar{u}_t)(y^{\varepsilon}_t)^2 ,p_t} dt  \\
&\quad + \frac{1}{2}\sum_{j=1}^d \E\int_0^\infty e^{-rt} \braket{D_x^2\sigma^j(\bar{X}_t,\bar{u}_t)(y^{\varepsilon}_t)^2,q_t}dt + o(\varepsilon).
\end{split}
\end{equation}
If we substitute relations \eqref{eq:duality:yp} and \eqref{eq:duality:zp} into equation \eqref{eq:spike_cost1}, we obtain
\begin{equation*}
\begin{split}
J\lp u^{\varepsilon} (\cdot) \rp - J \lp \bar{u}(\cdot) \rp &= \E \int_0^{\infty} e^{-rt} \left[ -\sum_{j=1}^{d} \braket{q^j_t,\delta\sigma^j(\bar{X}_t,\bar{u}_t)} - \braket{p_t,\delta b(\bar{X}_t,\bar{u}_t)} + \delta f(\bar{X}_t,\bar{u}_t) \right]dt \\
& \quad + \frac{1}{2} \E \int_0^{\infty}e^{-rt}\left[ -\sum_{j=1}^d\braket{D_x^2\sigma^j(\bar{X}_t,\bar{u}_t)(y^{\varepsilon}_t)^2,q_t} - \braket{D_x^2 b(\bar{X}_t,\bar{u}_t)(y^{\varepsilon}_t)^2,p_t} \right.\\
& \qquad \qquad \Bigg. + \braket{D_x^2 f(\bar{X}_t,\bar{u}_t) y_t,y_t} \Bigg]dt + o(\varepsilon),
\end{split}
\end{equation*}
and recalling the definition of the Hamiltonian $H(x,u,p,q) = \braket{p,b(x,u)} + \opn{Tr}\left[ q^T \sigma(x,u) \right] - f(x,u)$ we have the desired result.
\end{proof}
\subsection{Conditions on the discount factor $r$}
Here we collect some restrictions on the discount factor used throughout the computations in the paper. For the purposes of the SMP it is not necessary to find precise values of the discount factor, in general $r$ has to be positive and big enough. Nevertheless, it can be useful to exhibit some sufficient conditions.  \medskip

Starting from the well posedness of the state equation, we have to require $r > 2c_{1/2}$ in order to find a unique solution in the space $L^{2,-r}_{\mathcal{F}}(\R_+;\R^n)$. Regarding the first variation equation we have no other restriction. On the contrary, to assure that $D^2_x b(\bar{X}_t,\bar{u}_t)(y^{\varepsilon}_t)^2,D^2_x \sigma^j(\bar{X}_t,\bar{u}_t)(y^{\varepsilon}_t)^2 \in L^{2,-r}_{\mathcal{F}}(\R_+;\R^n)$ in the equation for $z^{\varepsilon}$, a sufficient condition
is $r > 2\max \lbrace 0, c_{1/2}, c_{2(2m+1)-1}, c_{2m-1}, c_3 \rbrace$, where $c_3$ comes from estimate \eqref{eq:state estimate 1} applied to the process $y^{\varepsilon}$. Further restrictions come from the proof of Proposition \ref{p:expansion}. Here it follows that one can choose $\rho_1, \rho_2 \geq 2c_{1/2}$; $\rho_3,\rho_5 \geq 16k(2m+1)\max \{c_{2k(2m+1)-1}, c_{8k-1}, c_{2km-1}\}$
 and $\rho_4 \geq 64k(2m+1)\max \{c_{4k(2m+1)-1}, c_{4km-1}\}$. These conditions are derived from the polynomial growth assumptions and from the use of the H\"older inequality.

The choice of the discount factor $r$ for the first adjoint equation (see Theorem \ref{t:adjoint:eq}) depends on the a priori estimate given by Lemma \ref{l:apriori.backward} as well as the integrability of the forcing term $D_xf(\bar{X}_t,\bar{u}_t)$. Therefore, due to the polynomial growth, it is easy to see that it is sufficient to consider $r > 2\max \lbrace 0, c_{1/2}, c_{l-1}\rbrace $.

For the existence and uniqueness of $(y^{t,\eta}_s)$, we choose $r > 2c_{1/2}$. Regarding the estimates \eqref{eq:estimate_y_gamma} and \eqref{eq:continuity:y:initial_data}, it is sufficient to choose $r > 2 \max \{c_{1/2},c_3\}$. Now, for the existence of the process $P$, it is sufficient to take $r> 2\max\{0,c_5,c_{3(2m+1)-1},c_{3m-1},c_{3l-1}\}$ (for $p=\frac{3}{2}, q =3$ in \eqref{eq:P bound estimate}). Regarding Proposition \ref{p:last:estimtes}, we have to add some restrictions originating from Lemma \ref{l:4.1} throughout the proof. More precisely, it is sufficient to require $r > 2 \max \lbrace c_{7}, c_{2m-1}, c_3 \rbrace$. \smallskip

To conclude, the statement of the SMP holds true if the discount factor is chosen in a way such that all the previous results can be applied.  Hence, it is sufficient to choose $k=1$ and $r$ such that
$r > 64(2m+1)\max \lbrace 0,c_{1/2}, c_3, c_5, c_7, c_{l-1}, c_{3l-1}, c_{2m-1},c_{3m-1}, c_{4m-1}, c_{2(2m+1)-1}, c_{3(2m+1)-1}, c_{4(2m+1)-1} \rbrace.$

\bibliography{mybib}
\bibliographystyle{plain}

\end{document}